\tikzstyle arrowstyle=[scale=1]
\tikzstyle directed=[postaction={decorate,decoration={markings,
    mark=at position .5 with {\arrow[arrowstyle]{stealth}}}}]
\newcommand\upnode[2][10pt]{
    \fill[white,path fading=fade d] (#2.north west) rectangle ($(#2.north east)+(0,-#1)$);
    \fill[white,path fading=fade l] (#2.south east) rectangle ($(#2.north east)+(-#1,0)$);
    \fill[white,path fading=fade r] (#2.south west) rectangle ($(#2.north west)+( #1,0)$);
}
\definecolor{green}{rgb}{0.,0.6,0.} 
\newcommand\yred{\Yfillcolour{red}}
\newcommand\ylw{\Yfillcolour{yellow}}
\newcommand\ygreen{\Yfillcolour{green!80}}
\newcommand\bleu{\textcolor{blue}}
\newcommand\trois[2]{\begin{tikzpicture}[scale=.35]
\ylw
\tyng(0cm,0cm,2,1);
\ygreen
\tyng(0cm,0cm,#1,#2);
\end{tikzpicture}}
\declaretheoremstyle[
  spaceabove=6pt, spacebelow=6pt,
  headindent=0pt,
  headfont=\normalfont\bfseries,
  notefont=\mdseries, notebraces={(}{)},
    bodyfont=\itshape,
  postheadspace=1em]{mystyle}
\declaretheorem[numberwithin=section,style=mystyle]{theorem}
\declaretheorem[numberwithin=section,style=mystyle]{lemma}
\declaretheorem[numberwithin=section,style=mystyle]{proposition}
\declaretheorem[numberwithin=section,style=mystyle]{corollary}
\numberwithin{equation}{section}
\numberwithin{theorem}{section}
\numberwithin{figure}{section}
\numberwithin{table}{section}
\newtheorem{conjecture}{\bleu{Conjecture}}
\definecolor{vertfonce}{rgb}{0.,0.5,0.} 
\renewcommand\vert{\textcolor{vertfonce}}
\renewenvironment{proof}[1][\proofname]{\par
  \pushQED{\qed}%
  \normalfont \topsep6\p@\@plus6\p@\relax
  \trivlist
  \item[\hskip\labelsep
        \scshape
    \vert{#1}\@addpunct{.}]\ignorespaces
}{%
  \popQED\endtrivlist\@endpefalse
}
\declaretheoremstyle[
spaceabove=3pt, spacebelow=3pt,
  headindent=0pt,
  headfont= \normalfont\bfseries,
  notefont=\mdseries, notebraces={(}{)},
    bodyfont=\normalfont,
  postheadspace=1em,
qed={\tiny\bleu{$\blacksquare$}}
]{probstyle}
\theoremstyle{definition}
\newtheorem{definition}[theorem]{\bleu{Definition}}
\renewcommand*\backref[1]{\ifx#1\relax \else (On page #1) \fi}
\newcommand\define[1]{\bleu{\bf #1}}
\newcommand\wrapcaption[1]{\captionsetup{font=tiny}
\caption{#1.}
\captionsetup{font=small}}
\DeclareMathOperator\youngdelta{\triangleleft\hskip-4pt\cdot\,}
\newcommand\ie{{\it i.e.}~}
\newcommand\area{\mathrm{area}}
\newcommand\jaune{\textcolor{yellow}}
\newcommand\Star[2]{\node at (#1+0.5,#2+0.4) {{\Huge$\boldsymbol\star$}};
				   \node at (#1+0.51,#2+0.41) {\jaune{\huge$\boldsymbol\star$}}; }
\newcommand{\Cat}{\mathcal{A}}
\newcommand{\des}{\mathrm{des}}
\newcommand\dinv{\mathrm{sim}}
\newcommand\simd{\mathrm{sim}}
\newcommand\Sim{\mathrm{Sim}}
\newcommand\Dyck{\mathcal{D}}
\newcommand\E{\mathcal{E}}
\newcommand\hook{\varepsilon}
\newcommand{\newatop}[2]{\genfrac{}{}{0pt}{}{#1}{#2}}
\newcommand\N{{\mathbb N}}
\newcommand{\qbinom}[2]{\genfrac{[}{]}{0pt}{}{#1}{#2}}
\renewcommand\S{{\mathbb S}}
\newcommand\SYT{{\textsc{syt}}}
\newcommand\q{\boldsymbol{q}}
\newcommand\x{\boldsymbol{x}}
\newcommand\Young{\mathbb{Y}}
\newcommand\YoungTriangle{\Young_\triangle}
\title[Triangular]{\large Combinatorics of Triangular Partitions}
\author{Fran\c{c}ois Bergeron}
\author{Mikhail Mazin}
\date{\today}
\begin{document} 
\begin{abstract}
The aim of this paper is to develop the combinatorics of constructions associated to what we call \emph{triangular partitions}. As introduced in~\cite{2102.07931}, these are the partitions whose cells are those lying below the line joining points $(r,0)$ and $(0,s)$, for any given positive reals $r$ and $s$. Classical notions such as Dyck paths and parking functions are naturally generalized by considering the set of partitions included in a given triangular partition. One of our striking results is that the restriction of the Young lattice to triangular partition has a planar Hasse diagram, with many nice properties. It follows that we may generalize the ``first-return'' recurrence, for the enumeration of classical Dyck paths, to the enumeration of all partitions contained in a fixed triangular one.   
\end{abstract}
\maketitle

{ \setcounter{tocdepth}{1}\parskip=0pt\footnotesize \tableofcontents}
\parskip=8pt  
\parindent=20pt


\section*{Introduction}
The last 30 years have seen increasing and interesting interactions between the combinatorics of generalized Dyck paths and parking functions, Macdonald polynomials and operators, diagonal coinvariant spaces, Hilbert schemes of points, Khovanov-Rozansky homology of $(m,n)$-torus links, Double affine Hecke algebra, Elliptic Hall algebra, and Superspaces of bosons and fermions. The aim of this paper is to further the combinatorial study of the most general extension of the notion of Dyck paths that is involved in this context. 

For the purpose of describing our overall setup, it is handy to consider classical Dyck paths as partitions $\alpha\subseteq \tau_{(n+1,n)}$ contained in the staircase partition $\tau_{(n+1,n)}:=(n-1,n-2,\ldots,1,0)$. These are well known to be enumerated by the Catalan numbers. From this point of view, parking functions are simply standard tableaux of skew shape $(\alpha+1^n)/\alpha$, for any such $\alpha$. Recall that they number $(n+1)^{n-1}$ in total. The  generalization we consider here is obtained by changing the overall partition $\tau_{(n+1,n)}$ to other suitably chosen partitions $\tau$. For instance, one may consider the more the general staircase $\tau_{(jn+1,n)}:=(j(n-1),j(n-2),\ldots,k)$, for some $j\geq 1$. In that case, it is well known that the number of partitions included in $\tau_{(jn+1,n)}$ is given by the Fuss-Catalan number $1/(jn+1)\binom{(j+1)n}{n}$. The corresponding number of parking functions is  given by the simple formula $(jn+1)^{n-1}$.  In the historical sequence of generalizations, \emph{Rational Catalan Combinatorics} (see~\cite{ArmstrongLoehr}) comes next in line, with an overall partition consisting in the set of cells lying below the diagonal of a $(k\times n)$-rectangle, with $k$ and $n$ coprime. The terminology ``rational'' alludes to $k/n$ being a rational number. Removing the coprimality condition, one gets \emph{Rectangular Catalan Combinatorics} (see~\cite{MR3682396}). 

The final step in the hierarchy, introduced in~\cite{2102.07931}, corresponds to choosing any overall ``triangular partition''. These are the partitions whose cells are those lying below the line joining points $(r,0)$ and $(0,s)$, for any given positive reals $r$ and $s$. The resulting partition\footnote{These are defined in Section \ref{sec_triangular}.}, are here denoted by $\tau_{rs}$. As mentioned previously, the rational case corresponds to choosing $r=k$ and $s=n$, relatively prime integers, and it is well known that the number of partitions contained in $\tau_{kn}$ is given by the formula $\frac{1}{k+n}\binom{k+n}{n}$,
whilst the number of associated parking functions is ${k^{n-1}}$.
These enumerations become a bit trickier when one drops the coprimality requirement\footnote{This is the ``rectangular'' case.}, taking the respective forms
\begin{align*}
 \bleu{\sum_{\mu\vdash d} \frac{1}{z_\mu} \prod_{j\in\mu} \frac{1}{a+b} \binom{j(a+b)}{jb}},\qquad {\rm and}\qquad
 \bleu{\sum_{\mu\vdash d} \frac{n!}{z_\mu}\prod_{j\in\mu}\frac{(ja)^{jb}}{a\,(jb)!}},
\end{align*} 
where $a:=k/d$, $b:=n/d$, for $d=\gcd(k,n)$;  and writing $j\in\mu$ for $j$ being a part of $\mu$. Recall here that $z_\mu:=\prod_i i^{d_i}\, d_i!$,  with $d_i$ being the number of $i$-size parts of $\mu$. 

The extension to any triangular partitions, when $r$ and $s$ are any positive real numbers, is motivated by the results in~\cite{2102.07931}. The purpose of this paper is to explore different aspects of this most general framework.
We start with a discussion of properties and intrinsic characterization triangular partitions, including interesting aspects of the dominance and containment order. We then obtain general recurrence for the (weighted) enumeration of triangular Dyck paths and associated parking functions (see Propositions \ref{proposition_Delta_recurrence} and \ref{proposition_Parking_recurrence}).

\section{Triangular partitions}\label{sec_triangular}
\begin{wrapfigure}{r}{0.32\textwidth}
\vskip-15pt
 \includegraphics[width=0.31\textwidth]{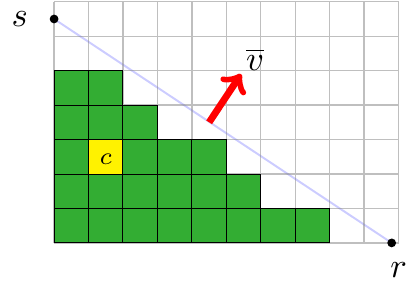}
 \vskip-10pt
\wrapcaption{Triangular partition}
\label{fig_triangle} 
\end{wrapfigure}
A partition $\tau=\tau_1\tau_2\cdots \tau_k$ is said to be \define{triangular} if there exists positive real numbers $r$ and $s$ such that 
$$\bleu{\tau_j} =  \bleu{\big\lfloor r-{j\,r}/{s}\big\rfloor},\hskip3.5cm \null$$
with $j$ running over integers that are less or equal to $s$. 
In other words, the cells  $c=(i,j)\in \N^2$ of the triangular partition $\tau_{rs}$ are those that lie below the line joining $(0,s)$ to $(r,0)$, \ie
	$$\bleu{(i,j)\in\tau_{r,s}}\qquad {\rm iff}\qquad \bleu{\frac{i+1}{r}+\frac{j+1}{s}}\leq \bleu{1}.\hskip3cm \null$$
We say that this line \define{cuts off} $\tau$.
Clearly, the conjugate of a triangular partition is triangular. 
Triangular partitions have been considered under the name ``plane corner cuts''  in~\cite{MR1692984},  and enumerated in~\cite{MR1692980}. Denoting by $t_N$ the number of triangular partitions of size $N$, small values of the sequence $\{t_N\}_N$ are:
$$1,1,2,3,4,6,7,8,10,12,13,16,16,18,20,23,\ldots$$
\autoref{TableTriangularPartitions} displays all triangular partitions of $N\leq 6$, with $\varepsilon$ denoting the ``empty'' partition.
\begin{table}[ht]
\ylw
 \Yboxdim{.3cm}
\begin{align*}
&\varepsilon,\quad \yng(1),\quad \yng(2),\yng(1,1),\quad \yng(3),\yng(2,1),\yng(1,1,1),\quad \yng(4),\yng(3,1),\yng(2,1,1),\yng(1,1,1,1),\\
&\yng(5),\yng(4,1),\yng(3,2),\yng(2,2,1),\yng(2,1,1,1),\yng(1,1,1,1,1),\quad \yng(6),\yng(5,1),\yng(4,2),\yng(3,2,1),\yng(2,2,1,1),\yng(2,1,1,1,1),\yng(1,1,1,1,1,1).
\end{align*}
\caption{All triangular partitions, for $n\leq 6$.}\label{TableTriangularPartitions}
\end{table}

Observe that many different pairs $(r,s)$ may give rise to the same triangular partition $\tau$. Indeed, as illustrated in \autoref{Fig_Trig_Droites}, we have $\tau=\tau_{rs}=\tau_{r's'}$ whenever there are no positive integer coordinate points lying between the lines ${x}/{r}+{y}/{s}=1$ and  ${x}/{r'}+{y}/{s'}=1$. 
In particular, $\tau_{rn}=\tau_{kn,n}$ for all $kn\leq r\leq kn+1$ and $k,n\in\N$.

We say that the line $x/r+y/s=1$ \define{touches}  the cell $(i,j)$ (from above) if it contains the north-east corner of the cell, \ie $(i+1)/r+(j+1)/s=1$. 
If the line $x/r+y/s=1$ touches no cells, then $\tau_{r's'}=\tau_{rs}$ for any $(r',s')$ close enough to $(r,s)$. If the line $x/r+y/s=1$ touches a cell $(i,j)$, then taking $r'=r-\epsilon$ and $s'=s-\epsilon$, for a sufficiently small positive $\epsilon$, one may ``remove'' the cell $(i,j)$ from the diagram of the partition. On the other hand, taking $r'=r+\epsilon$ and $s'=s+\epsilon$ we get a line cutting off the same partition while touching no cells. Summing up, for every triangular partition $\tau$ there is a pair $(r,s)$ such that the line $x/r+y/s=1$ cuts off $\tau$, without touching any cell.

Let us say that $\tau$ is \define{integral}, if there exist $k$ and $n$ in $\N$ such $\tau=\tau_{kn}$. As mentioned previously, these are the triangular partitions which give rise to the previous context of ``rectangular'' Catalan combinatorics; and adding the requirement that $\gcd(k,n)=1$ that of ``rational'' Catalan combinatorics. Clearly conjugation preserves integrality. For size $N\leq 10$,  the non-integral triangular partitions are the following:
\begin{align*}
\{&2111, 221, 32, 41, \\
&21111, 51,  \\
&211111, 3211, 421, 61, \\
&2111111, 221111, 22211, 53, 62,  71, \\
&21111111, 2211111, 3321, 432, 72, 81, \\
&211111111, 22111111, 322111, 33211, 532, 631, 82, 91\}.
\end{align*}
As $N$ grows, non-integral triangular partitions become preponderant. For instance, among all triangular partitions of size at most $55$, more than $87.5\%$ of them are non-integral. Thus triangular partitions significantly extend the field of study.

A \define{slope vector} for a triangular partition $\tau$ is a positive coordinate vector which is orthogonal to one of the lines that cut off $\tau$. If need be, we may normalize slope vectors so that their coordinates sum up to $1$. 
For any two lines cutting of the same triangular partition $\tau$, say with respective slope vectors $(t_1,1-t_1)$ and $(t_2,1-t_2)$, there exist a line with slope vector $(t,1-t)$ that also cuts of $\tau$ for any $t_1<t<t_2$. By definition, a triangular partition affords (infinitely many) slope vectors. Hence, it follows that:
\begin{lemma}
The closure of the set of all slope vectors of a triangular partition $\tau$ forms a convex cone $C_\tau$, of the form
	 \begin{equation} 
	 	\bleu{C_\tau} = \bleu{\{\lambda(t,1-t)\, |\, t^-_\tau \leq t \leq t^+_\tau,\, \lambda\in\mathbb{R}_+\}},
	\end{equation}
for some $0\leq t^-_\tau< t^+_\tau\leq 1$. Moreover, $\tau$ is uniquely characterized by its size and the pair $(t^-_\tau, t^+_\tau)$.
\end{lemma}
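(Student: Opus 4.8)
The plan is to reparametrize everything by the normalized slope. The line $x/r+y/s=1$ has normal direction $(1/r,1/s)$, which is proportional to $(s,r)$, so I set
$$t:=\frac{s}{r+s},\qquad 1-t=\frac{r}{r+s},$$
making $(t,1-t)$ the unique normalized slope vector attached to $(r,s)$; every slope vector of $\tau$ is then a positive multiple $\lambda(t,1-t)$. Let $S_\tau\subseteq(0,1)$ be the set of those $t$ for which some line of that slope direction cuts off $\tau$. Since the set of slope vectors is exactly $\{\lambda(t,1-t)\,:\,t\in S_\tau,\ \lambda>0\}$, the whole statement reduces to showing that $S_\tau$ is a nondegenerate interval and then taking closures.

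First I would record that $S_\tau$ is convex. This is immediate from the ``intermediate slope'' property displayed just before the lemma: if $t_1,t_2\in S_\tau$ then every $t$ with $t_1<t<t_2$ also lies in $S_\tau$. Thus $S_\tau$ is an interval; writing $t^-_\tau:=\inf S_\tau$ and $t^+_\tau:=\sup S_\tau$ gives $0\le t^-_\tau\le t^+_\tau\le 1$ and $\overline{S_\tau}=[t^-_\tau,t^+_\tau]$.

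Next I would show $t^-_\tau<t^+_\tau$. Here I invoke the earlier observation that $\tau$ admits a pair $(r,s)$ whose line touches no cell, together with the remark that then $\tau_{r's'}=\tau$ for all $(r',s')$ in an open neighborhood of $(r,s)$. Fixing the first coordinate and letting $s'$ range over this neighborhood, $t=s'/(r+s')$ is strictly monotonic in $s'$ and hence sweeps out an interval of positive length inside $S_\tau$, forcing $t^-_\tau<t^+_\tau$. Taking closures, the set of slope vectors has closure the angular sector $\{\lambda(t,1-t)\,:\,t\in[t^-_\tau,t^+_\tau],\ \lambda\ge 0\}$, which is the cone over the compact segment $\{(t,1-t)\,:\,t^-_\tau\le t\le t^+_\tau\}$ lying on the line $x+y=1$ away from the origin. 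That cone is closed and convex (a convex combination of $\lambda_1 v_1$ and $\lambda_2 v_2$ rescales to a convex combination of $v_1,v_2$), giving the displayed form of $C_\tau$.

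For uniqueness, suppose $\tau,\tau'$ are triangular with $|\tau|=|\tau'|$ and $(t^-_\tau,t^+_\tau)=(t^-_{\tau'},t^+_{\tau'})$. As both intervals are nondegenerate with the same endpoints, pick a common interior slope $t_0\in(t^-_\tau,t^+_\tau)\subseteq S_\tau\cap S_{\tau'}$. Then $\tau$ and $\tau'$ are cut off by two lines of the same direction $(t_0,1-t_0)$; being parallel these are $x/r+y/s=1$ and $x/r+y/s=\mu$ for some $\mu>0$, and comparing the defining inequalities shows the two partitions are nested, the one with the larger $\mu$ containing the other. Equal size then forces $\tau=\tau'$. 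The only step requiring genuine care is the nondegeneracy $t^-_\tau<t^+_\tau$ and the attendant closure computation; once $S_\tau$ is known to be an interval with nonempty interior, both the cone description and the uniqueness argument follow directly from the containment behaviour of parallel cutting lines.
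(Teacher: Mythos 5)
Your proof is correct and follows essentially the same route as the paper, which derives this lemma directly from the observations immediately preceding it (the intermediate-slope property and the perturbation of a cutting line that touches no cell) and settles the uniqueness claim later via the same nesting argument for partitions cut off by parallel lines of a common slope. Your write-up simply makes explicit and rigorous the steps the paper leaves as assertions.
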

We will further discuss this unicity below, but first we show how to explicitly calculate $t^-_\tau$ and $t^+_\tau$. As we will see, this also gives a direct characterization of triangular partitions, without having to exhibit a cutting line.
\begin{figure}[ht]
 \includegraphics[scale=.66]{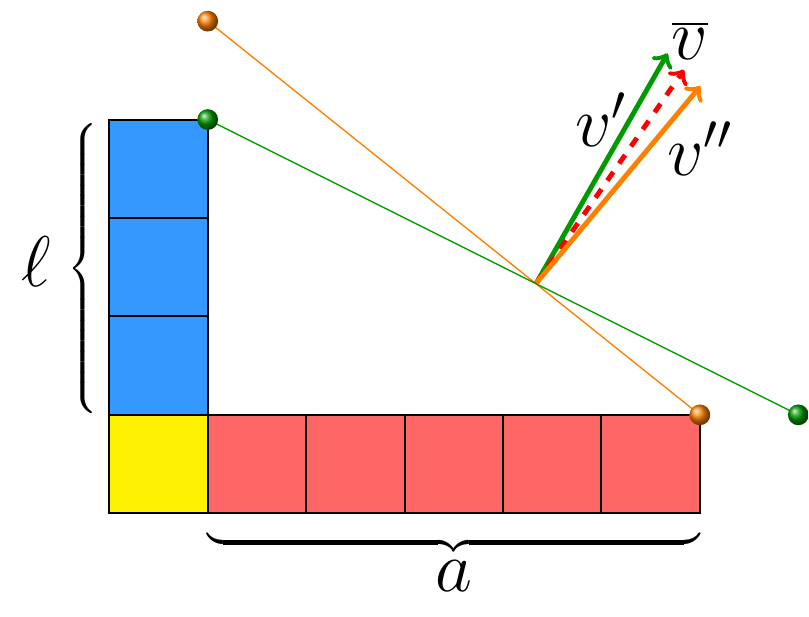}
  \caption{The two extreme slope vectors (here upscaled) of a hook shape $(a\,|\,\ell)$.}
  \label{fig_hook_slopes}
  \end{figure}
  
For a cell $c$ of a partition $\mu$, recall that the \define{hook} of $c$ is the partition of shape $(a+1,1^\ell)$, with $a=a_\mu(c)$ and $\ell=\ell_\mu(c)$ standing  for the \define{arm} and \define{leg} of $c=(i,j)$ of $\mu$. Recall that $a_\mu(c):=\mu_j-i$ (resp. $\ell_\mu(c):=\mu'_i-j$) is the number of cells of $\mu$ that sit to the right of (resp. above) $c$ in the same row (resp. column). The \define{hook length}\index{cell!hook length}  of  cell $c$ in $\mu$, is defined to be \define{$\hook(c)=\hook_\mu(c):= 1+a_\mu(c)+\ell_\mu(c)$}. The ``Frobenius notation'' for such a hook is $(a\,|\,\ell)$.

For any given cell $c$ of a partition $\mu$, we consider the vectors $(t'(c),1-t'(c))$ and $(t''(c),1-t''(c))$  respectively orthogonal to the lines: 
\begin{itemize}
\item  which joins the vertices $(1,\ell(c)+1)$ and $(a(c)+2,1)$, and 
\item which joins the vertices $(1,\ell(c)+2)$ and $(a(c)+1,1)$.
\end{itemize}
An instance is illustrated  in \autoref{fig_hook_slopes}.
 It is easy to check that
\begin{equation}\label{def_slope_t}
\bleu{t'_\mu(c)}:= \bleu{{\ell(c)}/{\hook(c)}},\qquad {\rm and}\qquad  \bleu{t''_\mu(c)} = \bleu{{(\ell(c)+1)}/{\hook(c)}}.
\end{equation}
When $\tau$ is a triangular partition, all of its slope vectors $v=(t,1-t)$ must be such that  $t'_\tau(c)<t<t''_\tau(c)$. Indeed, all the cells appearing in the hook of $c$ must lie below (up to parallel shift) any line that cuts off $\tau$, with $t'_\tau(c)$ and $t''_\tau(c)$ giving extreme bounds. Otherwise, an extra cell would have to lie in $\tau$, either at the end of the arm or the leg of $c$. 

Vice versa, if the condition $t'_\tau(c)<t<t''_\tau(c)$ is satisfied for every cell $c$ in $\tau$, then $(t,1-t)$ is a slope vector of $\tau$. Indeed, consider the lowest line $(r,s)$ perpendicular to $(t,1-t)$ and such that $\tau\subset \tau_{rs}$. It follows that the line $(r,s)$ touches a cell of $c'$ of $\tau$. Suppose that there is a cell $c''$ that fits under the line $(r,s)$, but such that $c''\notin \tau$. Without loss of generality, we can assume that $c'$ lies to the west of $c''$. Let $c\in\tau$ be the cell in the same column as $c'$ and the same row as $c''$. Then $t\ge t''_\tau(c)$ which is a contradiction. Hence there is no such cell $c''$ and $\tau=\tau_{rs}$ We have thus shown the following:

\begin{lemma}\label{lemma_slope}
A partition $\tau$ is triangular if and only if $t^-_\tau< t^+_\tau$, with 
\begin{equation}\bleu{t^-_\tau:=\max_{c\in\tau} \frac{\ell(c)}{a(c)+\ell(c)+1}},\qquad {\rm and}\qquad 
        \bleu{t^+_\tau:=\min_{c\in\tau}\frac{\ell(c)+1}{a(c)+\ell(c)+1}}.
 \end{equation}
Furthermore, if $t^-_\tau< t^+_\tau$ then $(t,1-t)$ is a slope vector of $\tau$ if and only if $t^-_\tau<t< t^+_\tau.$
\end{lemma}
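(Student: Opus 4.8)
The plan is to deduce the statement from the two implications already worked out in the discussion preceding the lemma, the only genuinely new ingredient being to repackage the per-cell constraints $t'_\tau(c)<t<t''_\tau(c)$ into a single interval condition. The engine of the proof is the equivalence
\[
(t,1-t)\ \text{is a slope vector of}\ \tau
\quad\Longleftrightarrow\quad
t'_\tau(c)<t<t''_\tau(c)\ \text{ for every cell } c\in\tau .
\]
The forward implication is the local hook computation sketched above: for a fixed slope vector, the requirement that the arm-end and leg-end cells of each hook lie below the cutting line while the cells just past the arm and the leg lie above it translates, after comparing the corresponding defining inequalities, into exactly the bounds $t'_\tau(c)<t$ and $t<t''_\tau(c)$. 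The reverse implication is the global ``lowest line'' argument. I would record this equivalence first and treat everything else as bookkeeping.

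The bookkeeping runs as follows. Since $\tau$ has finitely many cells, both extrema in the statement are attained, and using $t'_\tau(c)=\ell(c)/(a(c)+\ell(c)+1)$ together with $t''_\tau(c)=(\ell(c)+1)/(a(c)+\ell(c)+1)$ from \eqref{def_slope_t}, the condition ``$t'_\tau(c)<t$ for every $c$'' is equivalent to $t^-_\tau<t$, while ``$t<t''_\tau(c)$ for every $c$'' is equivalent to $t<t^+_\tau$. Hence $(t,1-t)$ is a slope vector if and only if $t^-_\tau<t<t^+_\tau$, which is precisely the ``furthermore'' assertion. For the main equivalence, recall that by definition $\tau$ is triangular exactly when it admits at least one slope vector; by the characterization just obtained such a vector exists if and only if the open interval $(t^-_\tau,t^+_\tau)$ is nonempty, that is, if and only if $t^-_\tau<t^+_\tau$. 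One also notes that $0\le t^-_\tau$ and $t^+_\tau\le 1$ hold for every nonempty $\tau$, so any admissible $t$ automatically gives a genuinely positive slope vector, and the empty partition is triangular by convention.

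The step I expect to be the main obstacle is the reverse implication in the engine equivalence, namely the passage from finitely many local necessary conditions to the global existence of a single line cutting off $\tau$. The delicate point in the ``lowest line'' argument is that, once the lowest line with normal $(t,1-t)$ is seen to touch some cell $c'\in\tau$, one must rule out any cell $c''$ sitting under the line yet outside $\tau$; the resolution is to consider the cell $c$ lying in the column of $c'$ and the row of $c''$, show that $c\in\tau$ (here the staircase shape of a Young diagram is essential, guaranteeing the relevant rows and columns are filled), and check that the extremality of $c'$ forces $t\ge t''_\tau(c)$, contradicting the hypothesis. Verifying that this auxiliary cell indeed lies in $\tau$, together with the ``without loss of generality to the west'' reduction handled by conjugation symmetry, is where the care lies; by contrast, the forward implication and the $\max/\min$ repackaging are routine.
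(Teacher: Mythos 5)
Your proposal is correct and takes essentially the same route as the paper: the paper's own proof is exactly the discussion preceding the lemma (the forward direction via the per-cell hook constraints $t'_\tau(c)<t<t''_\tau(c)$, the converse via the lowest-line argument with the auxiliary cell $c$ in the column of $c'$ and the row of $c''$), after which the lemma is declared ``thus shown'', with the $\max/\min$ repackaging left implicit just as you describe. One small remark: with the stated orientation ($c'$ west of $c''$, so $j''<j'$) the extremality of $c'$ actually yields $t\le t'_\tau(c)$ rather than $t\ge t''_\tau(c)$ (the latter arises in the mirror case $c'$ east of $c''$), but this quirk is inherited verbatim from the paper's own text and does not affect the argument.
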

As illustrated in \autoref{Fig_Trig_Droites}, this gives an explicit description for the extreme rays of the cone $C_\tau$ which does not rely on an explicit knowledge of a line cutting off $\tau$. Setting $t_\tau:=(t^-_\tau+t^+_\tau)/2$,  we may consider the vector $v_\tau:=(t_\tau,1-t_\tau)$ to be a ``standard'' \define{slope vector} for $\tau$.
\begin{figure}[ht]
 \includegraphics[scale=.75]{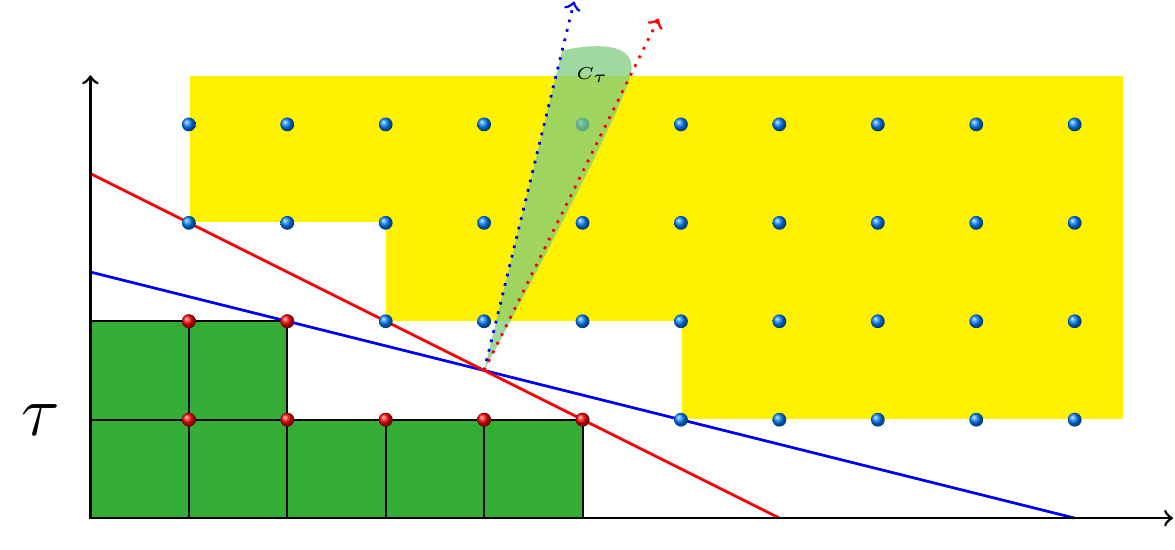}
\caption{The cone of a triangular partition.}\label{Fig_Trig_Droites}
\end{figure}

Two triangular partitions are said to be \define{similar}\footnote{Careful, this is not intended to be an equivalence relation.} if they share a slope vector. For any triangular partition $\tau$, it is easy to check that all partitions associated to $\tau$-\define{shadows} of a cell $c=(i,j)$:
  	$$\bleu{{\rm sh}_c(\tau)}:=\bleu{\{(x,y)\,|\, (x,y)\in \tau,\ {\rm with}\ x\geq i\ {\rm and}\ y\geq j\}},$$
are similar to $\tau$. For any fixed $t\geq 0$,  all the partitions with cell sets: 
   	$$\bleu{\{ (i,j)\,|\, i\,t+j\,(1-t)< d\}},\qquad \bleu{0\leq d < \infty},$$ 
are clearly similar, since they share the slope vector $(t,1-t)$. These partitions are nested as $d$ grows. Furthermore, if one chooses $t$ so that $t/(1-t)$ is irrational, cells are added one at a time as $d$ grows. We get the following

\begin{corollary}\label{corollary: tripar of size n}
For a fixed size $n$ there are finitely many values 
\begin{equation*}
0=t_0<t_1<\ldots<t_k<1=t_{k+1}
\end{equation*}
 such that for every $i\in\{0,1,\ldots,k+1\}$ no triangular partition of size $n$ admits a slope vector $(t_i,1-t_i)$.  Furthermore, for every $i\in\{0,1,\ldots,k\}$ there is a unique triangular partition $\tau_i$ of size $n$ such that $t^-_{\tau_i}=t_i$ and $t^+_{\tau_i}=t_{i+1}$. 
\end{corollary}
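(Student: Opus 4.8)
The plan is to organize the slope parameters on the open interval $(0,1)$ and to read off the partitions of size $n$ as the complementary gaps. First I would attach to each triangular partition $\tau$ of size $n$ its \emph{slope interval} $I_\tau:=(t^-_\tau,t^+_\tau)$; by Lemma \ref{lemma_slope} this interval is nonempty (triangularity is exactly $t^-_\tau<t^+_\tau$) and $(t,1-t)$ is a slope vector of $\tau$ precisely when $t\in I_\tau$. Since there are only finitely many partitions of size $n$, there are finitely many such intervals, and I set $S:=\bigcup_\tau I_\tau\subseteq(0,1)$. The whole statement will follow once I show that the $I_\tau$ are pairwise disjoint and that $(0,1)\setminus S$ is finite: its points, listed in increasing order, will be $t_1<\cdots<t_k$, and I will take $t_0:=0$ and $t_{k+1}:=1$.

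The key mechanism is the uniqueness of the partition attached to an irrational slope. For $t$ irrational the linear form $\phi_t(i,j):=it+j(1-t)$ is injective on $\N^2$ (an equality $\phi_t(i,j)=\phi_t(i',j')$ with $(i,j)\neq(i',j')$ would force $t/(1-t)$ rational), so, as observed just before the statement, cells are added one at a time and there is a \emph{unique} triangular partition of size $n$ admitting $(t,1-t)$ as a slope vector: it is the sublevel set $\{(i,j):\phi_t(i,j)\le d\}$ of the correct cardinality, which is exactly $\tau_{rs}$ for $r=(d+1)/t$ and $s=(d+1)/(1-t)$. Two size-$n$ triangular partitions sharing an irrational slope therefore coincide. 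Because the intervals $I_\tau$ are open, any two that met would meet in an open set, hence at an irrational $t$, forcing the two partitions to be equal; this yields disjointness. Dually, every irrational $t\in(0,1)$ lies in some $I_\tau$, so $S$ is dense and $(0,1)\setminus S$ has empty interior.

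To finish I would prove finiteness of the complement and identify the gaps. Any $t\in(0,1)\setminus S$ is a limit of points of $S=\bigsqcup_\tau I_\tau$, hence, by finiteness of the union, a limit of points of a single $I_\tau$, so $t\in\{t^-_\tau,t^+_\tau\}$; thus $(0,1)\setminus S$ is contained in the finite set of endpoints and is finite. Since $0\le t^-_\tau<t^+_\tau\le 1$ for every $\tau$, neither $0$ nor $1$ is a slope value, which secures the required property at $i=0$ and $i=k+1$ as well. Finally $S=\bigsqcup_{i=0}^{k}(t_i,t_{i+1})$, and the disjoint open intervals $I_\tau$ whose union is $S$ must each be one of these connected components: if some $I_\tau$ were a proper subset of its component, an endpoint of $I_\tau$ would lie inside $S$, hence in another $I_{\tau'}$, contradicting disjointness. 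Consequently each gap $(t_i,t_{i+1})$ equals exactly one $I_{\tau_i}$, giving the unique $\tau_i$ with $t^-_{\tau_i}=t_i$ and $t^+_{\tau_i}=t_{i+1}$.

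The step I expect to be most delicate is promoting this purely local disjointness to a clean partition of $(0,1)$ into gaps: that distinct size-$n$ triangular partitions never share \emph{any} slope — not merely a generic one — rests entirely on the density of irrational slopes combined with the openness of the $I_\tau$. I would be careful to justify that an irrational sublevel set of $\phi_t$ is genuinely a triangular partition, since its defining line has the positive intercepts $(d+1)/t$ and $(d+1)/(1-t)$, rather than taking this for granted.
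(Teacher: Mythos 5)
Your proof is correct and takes essentially the same route as the paper: the paper deduces this corollary directly from the observation immediately preceding it — that for $t$ with $t/(1-t)$ irrational, cells are added one at a time as $d$ grows, so each irrational slope determines a unique size-$n$ triangular partition — combined with the open-interval description of slope vectors from \autoref{lemma_slope}, which is exactly your key mechanism. Your write-up merely supplies the point-set bookkeeping (pairwise disjointness of the open intervals $I_\tau$, density of their union, and identification of the connected components with the gaps $(t_i,t_{i+1})$) that the paper leaves implicit.
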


\subsection*{Note}
Part of the study of triangular partitions may be coined in terms of  ``sturmian'' words. Indeed, these give explicit descriptions of discrete lines in the plane. Their factors, also known as ``mechanical'' word, describe segments of discrete lines. The upper bound (cells closest to a cutting line) of  triangular partitions correspond to such segment. However, this approach does not emphasize the number of cells lying below the line. For more on sturmian and mechanical words, see~\cite[Chapter 2]{MR1905123}.  

\section{Moduli space of lines}\label{section_lines}

Note that a line $x/r+y/s=1$ touches a cell $(i,j)=(a-1,b-1)$ if and only if
\begin{equation*}
a/r+b/s=1\quad {\rm iff}\quad
as+br=rs\quad {\rm iff}\quad
(r-a)(s-b)=ab.
\end{equation*}
Therefore the positive $(r,s)$-quadrant $\mathcal{Q}:=(\mathbb{R}_{>0})^2$ is can be decomposed into regions by (positive branches of) hyperbolas with equations as above, one for each cell. Thus $\mathcal{Q}$ may be considered as a \define{moduli space of lines}. In this moduli space, the {hyperbola} 
$$\mathcal{H}_{ab}:=\{(r,s)\in \mathcal{Q} \, |\, (r-a)(s-b)=ab\},$$
associated to a cell $(i,j)=(a-1,b-1)$, separates $\mathcal{Q} $ according to whether the cell $(i,j)$ fits below the line $(r,s)$, is touched by the line $(r,s)$, or does not fit under it. These possibilities respectively correspond to 
\begin{equation*}
(r-a)(s-b)\geq ab,\qquad
(r-a)(s-b)=ab,\qquad {\rm and} ,\qquad (r-a)(s-b)< ab.
\end{equation*}
Thus the cells occurring in a triangular partition specified by $(r,s)$ are in bijection with the hyperbolas separating $(r,s)$ from the origin. In other words, as one crosses the hyperbola $\mathcal{H}_{ab}$, going towards the origin, the cell $(i,j)=(a-1,b-1)$ gets removed from the partition. For this reason it is natural to say that $\mathcal{H}_{ab}$ is the \define{hyperbolic wall} associated to $(a,b)$.

\begin{lemma}\label{lemma_bijection}
There is a bijective natural  map\footnote{Illustrated in \autoref{Fig_Triangular_Young_dual}.} $\rho:\mathcal{C}\to \mathcal{T}$, from the set $\mathcal{C}$ of connected components of the complement:
\begin{equation*}
 \overline{\mathcal{Q} }:=\mathcal{Q}\, \setminus \bigcup\limits_{(a,b)\in \mathbb{N}^2} \mathcal{H}_{ab},
\end{equation*}
to the set $\mathcal{T}$ of triangular partitions.  
\end{lemma}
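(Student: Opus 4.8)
The plan is to exhibit $\rho$ concretely as the level-set map sending a component $U\in\mathcal{C}$ to the common value $\tau_{rs}$ of $(r,s)\in U$, and then to prove it is a bijection by identifying the level sets of $(r,s)\mapsto\tau_{rs}$ with the connected components of $\overline{\mathcal{Q}}$.

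First I would check that $\rho$ is well defined, i.e.\ that $\tau_{rs}$ is constant on each connected component $U$. For a fixed cell $(i,j)=(a-1,b-1)$ set $f_{ab}(r,s):=(r-a)(s-b)-ab$. By the computation preceding the statement, $(i,j)\in\tau_{rs}$ exactly when $f_{ab}(r,s)\geq 0$, and $f_{ab}$ vanishes precisely on $\mathcal{H}_{ab}$. Since $U$ meets no hyperbola, $f_{ab}$ is nowhere zero on the connected set $U$, hence of constant sign there; thus the membership of every cell, and so the whole partition $\tau_{rs}$, is constant on $U$. As $\tau_{rs}$ is triangular by definition, this produces a map $\rho:\mathcal{C}\to\mathcal{T}$.

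For the heart of the argument I would reparametrize $\mathcal{Q}$ by $(r,s)=(\lambda(1-t),\lambda t)$, where $\lambda=r+s\in(0,\infty)$ and $t=s/(r+s)\in(0,1)$; this is a homeomorphism onto $(0,\infty)\times(0,1)$. On $\overline{\mathcal{Q}}$ no cell is touched, so a cell $(a-1,b-1)$ lies in $\tau_{rs}$ precisely when $\lambda>c_{ab}(t)$, where
\[
c_{ab}(t):=\frac{a}{1-t}+\frac{b}{t}
\]
is continuous, positive, and monotone increasing in both $a$ and $b$. Hence, for a fixed triangular $\tau$, the level set $R_\tau:=\{(r,s)\in\overline{\mathcal{Q}}:\tau_{rs}=\tau\}$ is
\[
R_\tau=\{(\lambda,t)\ :\ t^-_\tau<t<t^+_\tau,\ \lambda^-(t)<\lambda<\lambda^+(t)\},
\]
with $\lambda^-(t)$ the maximum of $c_{ab}(t)$ over the removable corners of $\tau$ and $\lambda^+(t)$ the minimum of $c_{ab}(t)$ over the addable corners (monotonicity of $c_{ab}$ lets me restrict to these finitely many cells). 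In particular each $R_\tau$ is open, and the $R_\tau$ partition $\overline{\mathcal{Q}}$ into disjoint open sets, so each is clopen in $\overline{\mathcal{Q}}$ and a union of connected components.

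The main obstacle is then the connectedness of each $R_\tau$, which is exactly what forces $\rho$ to be injective; the reparametrization above is designed to make this tractable. By Lemma~\ref{lemma_slope} one has $\lambda^-(t)<\lambda^+(t)$ precisely for $t\in(t^-_\tau,t^+_\tau)$, so $R_\tau$ is the region strictly between the two continuous graphs $\lambda^\pm$ over the open interval $(t^-_\tau,t^+_\tau)$; such a region is path-connected, since every point joins, within its vertical (interval) fiber, to the continuous midline $t\mapsto\big((\lambda^-(t)+\lambda^+(t))/2,\,t\big)$. Being clopen and connected, $R_\tau$ is a single component of $\overline{\mathcal{Q}}$. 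Finally, surjectivity of $\rho$ is the already-established fact that every triangular partition admits a cutting line touching no cell, i.e.\ that $R_\tau\neq\emptyset$. Thus the components of $\overline{\mathcal{Q}}$ are exactly the $R_\tau$, and $\rho$ is the claimed bijection.
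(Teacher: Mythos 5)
Your proof is correct, but it takes a genuinely different route from the paper's. The paper proves injectivity by a path argument with a case analysis: given two parameter points cutting off the same partition, it joins them either by a straight segment (when the two lines do not meet in the positive quadrant) or by an arc of the hyperbola of lines through their intersection point, and shows such a path cannot cross a hyperbolic wall without changing the partition; this argument is self-contained (it does not use \autoref{lemma_slope}) and the same path technique is recycled in the next lemma on removable cells. You instead give a global structural description of the level sets: after the change of coordinates $(r,s)\mapsto(\lambda,t)=(r+s,\,s/(r+s))$, each level set $R_\tau$ becomes the open region between the two continuous graphs $\lambda^-=\max c_{ab}$ (over cells of $\tau$) and $\lambda^+=\min c_{ab}$ (over cells outside $\tau$) over the slope interval $(t^-_\tau,t^+_\tau)$, and then pure point-set topology (an open partition consists of clopen sets; a between-graphs region is path-connected) forces each level set to be a single component. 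This buys an explicit fibered picture of $\mathcal{R}_\tau^\circ$ as an open $2$-cell lying over the cone of slopes --- which anticipates the later analysis of the regions and their boundary walls --- and it avoids the two-case path construction, at the price of invoking \autoref{lemma_slope} to know that $\lambda^-(t)<\lambda^+(t)$ exactly on $(t^-_\tau,t^+_\tau)$. Two small points you leave implicit, both immediate from the monotonicity of $c_{ab}$ in $(a,b)$: the corners over which you take $\max/\min$ should be understood as the outer/inner corners of the Young diagram (not the paper's stricter notion of triangular-removable/addable cells), and a point strictly between the two graphs lies on no hyperbola at all (since any wall through it would come from a cell violating one of the two defining inequalities), which is what makes your between-graphs region actually equal to the level set inside $\overline{\mathcal{Q}}$.
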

\begin{figure}
\centering
\begin{subfigure}[t]{3in}
\centering
 \begin{tikzpicture}
	\node at (0,0) {\includegraphics[scale=.6]{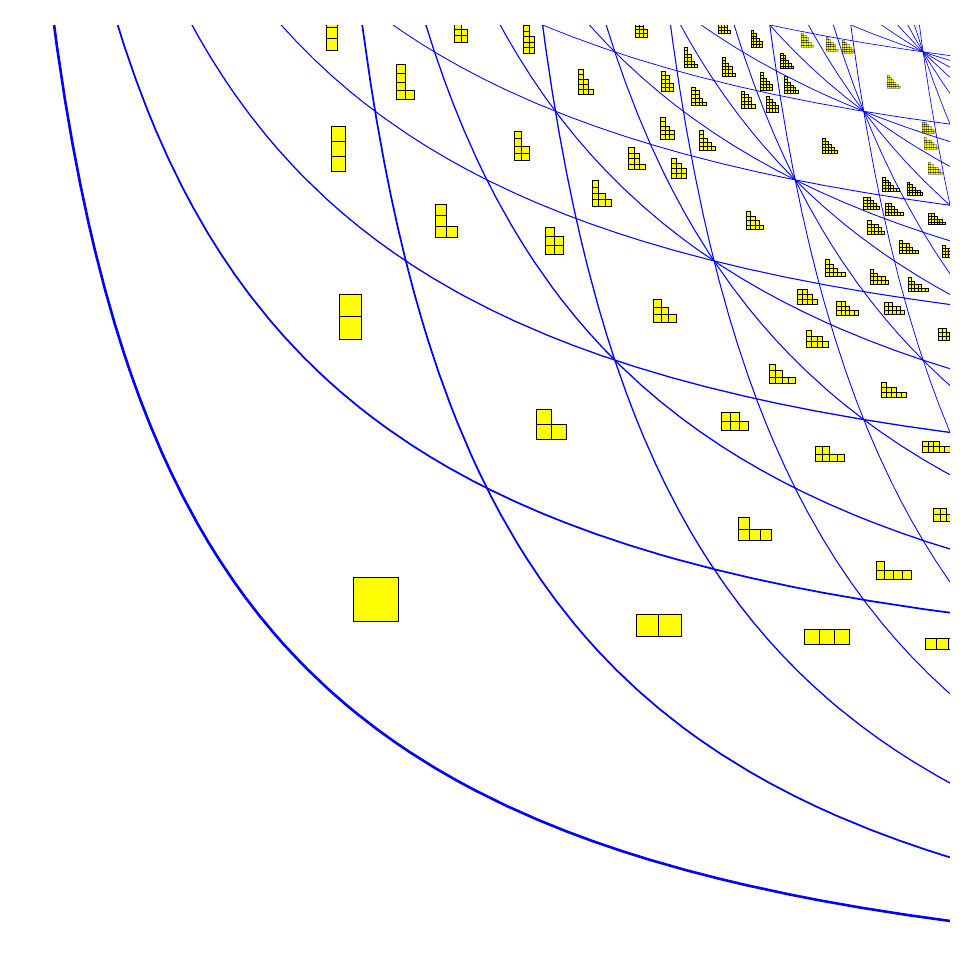}};
	 \node at (-1.4,-1.4) {$\varepsilon$};
        \draw[->] (-3,-3) -- (3,-3) node[below] {$r$};
  	\draw[->] (-3,-3) -- (-3,3) node[left] {$s$};
\end{tikzpicture}
	\caption{With regions labeled}\label{fig:1a}
\end{subfigure}
\begin{subfigure}[t]{3in}
\centering
 \begin{tikzpicture}
	   \node at (0,0) {\includegraphics[scale=.14]{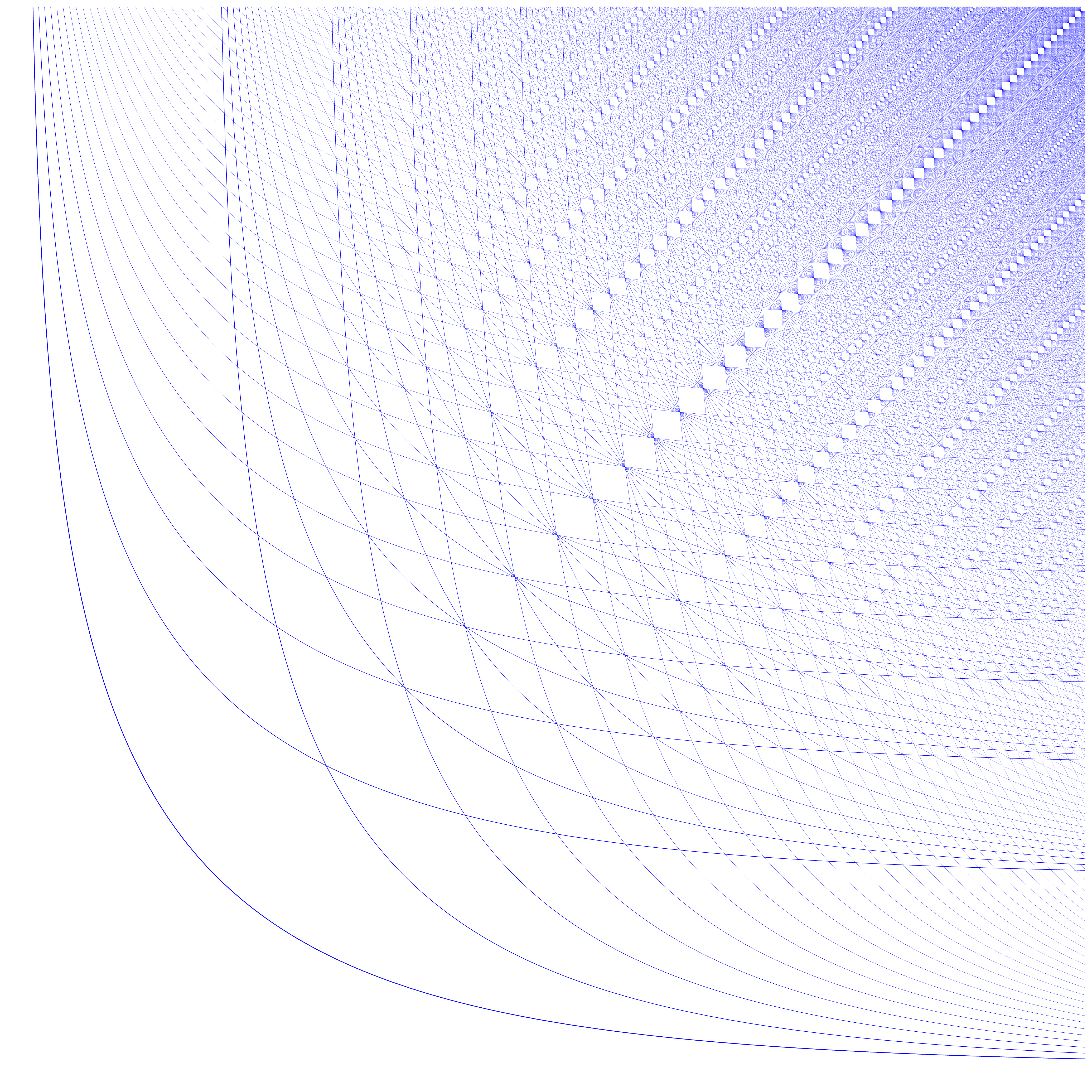}};
        \draw[->] (-3,-3) -- (3,-3) node[below] {$r$};
  	\draw[->] (-3,-3) -- (-3,3) node[left] {$s$};
\end{tikzpicture}
 	\caption{Larger portion}\label{fig:1b}
\end{subfigure}
 \caption{Triangular partition regions, in logarithmic scale.}
 	\label{Fig_Triangular_Young_dual}
\end{figure}
\begin{proof}
Consider the map that sends the $(r,s)$-line in $\overline{\mathcal{Q} }$ to the partition $\tau_{rs}$. 
As observed above, this is a locally constant map from  $\overline{\mathcal{Q} }$ to the set of triangular partitions. Therefore, it is well defined on connected components. The surjectivity is immediate, since by definition a triangular partition $\tau$ is cut off by some line $x/r+y/s=1$, and we have observed that $(r,s)$ may be chosen so that this line contains no points of $\N^2$.
To prove injectivity one needs to show that whenever two points of  $\overline{\mathcal{Q} }$ define the same triangular partition, then they must lie in the same connected component. We consider two cases. 

\begin{figure}
 \includegraphics[scale=.8]{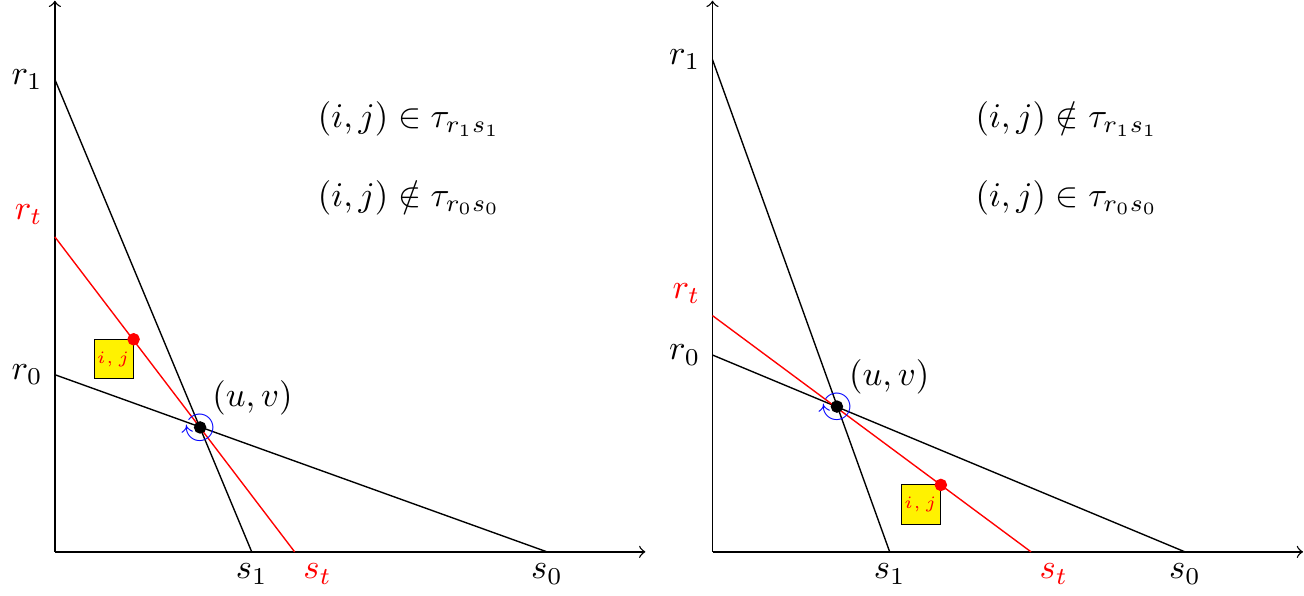}
\caption{Illustration of \autoref{lemma_bijection}.}
\label{figure: lemma 1 case 1}    
\end{figure}

First, when the lines associated to $(r_0,s_0)$ and $(r_1,s_1)$ do not intersect in the positive quadrant, then one may assume without loss of generality that $r_0\le r_1$ and $s_0\le s_1$. Let us show that the segment $(r_t,s_t):=(r_0+t(r_1-r_0),s_0+t(s_1-s_0))$, connecting these two points of ${\mathcal{Q} }$, lies entirely within the same connected component of $\overline{\mathcal{Q} }$. Indeed, if the segment $(r_t,s_t)_{0\leq t\leq 1}$ would cross one of the hyperbolic walls $\mathcal{H}_{ab}$, then there would exist some $0<t<1$ such that the line ${x}/{r_t}+{y}/{s_t}=1$ passes through $(a,b)$. The partition corresponding to $(r_1,s_1)$ would thus contain the cell $(i,j)=(a-1,b-1)$, whereas the partition corresponding to $(r_0,s_0)$ would not; which is a contradiction.

If the lines associated to $(r_0,s_0)$ and $(r_1,s_1)$ intersect in the positive quadrant (as illustrated in \autoref{figure: lemma 1 case 1}), say at some (non-integer) point $(u,v)$, then both points $(r_0,s_0)$ and $(r_1,s_1)$ belong to the hyperbola $(r-u)(s-v)=uv$. As one moves along this hyperbola, associated lines rotate around $(u,v)$. Without loss of generality, one may assume that $r_0\le r_1$ and $s_0\ge s_1$. Consider the segment of the hyperbola $(r-u)(s-v)=uv$ lying between the points $(r_0,s_0)$ and $(r_1,s_1)$. Suppose that for some point $(r_t,s_t)$ on this segment the line $x/r_t+y/s_t=1$ passes through an integer point $(a,b)$. If $a<u$, then it follows that the triangular partition corresponding to $(r_1,s_1)$ contains the cell $(i,j)=(a-1,b-1)$, whereas the partition corresponding to $(r_0,s_0)$ does not. Similarly, if $a>u$, then the triangular partition corresponding to $(r_0,s_0)$ contains the cell $(i,j)=(a-1,b-1)$, whereas the partition corresponding to $(r_1,s_1)$ does not. Moreover, $a\neq u$ since the hyperbola $(r-u)(s-v)=uv$ can only intersect a vertical line at one point. Therefore, we get a contradiction in this case as well.
\end{proof}

Let $\tau$ be a triangular partition, and set
\begin{equation*}
\mathcal{R}_\tau:=\mathrm{CL}\big\{(r,s)\in \mathcal{Q}\,\big|\  x/r+y/s=1\ \textrm{cuts off}\  \tau\big\}.
\end{equation*}
This is the closure of the connected component corresponding to $\tau$ by the above lemma. Observe that, for any $(r,s)$ in the interior $\mathcal{R}_\tau^\circ$, the line $x/r+y/s=1$ does not touch any cells. With this terminology, the bijective map of \autoref{lemma_bijection} sends $ \mathcal{R}_\tau^\circ \mapsto\tau$. 

We say that a cell of a triangular partition $\tau$ is \define{removable}  
if it can be removed 
 so that the resulting partition is also triangular. Similarly, we say that a cell of the complement of $\tau$ is \define{addable} if it can be added to $\tau$ so that the resulting partition is also triangular. An argument similar to the proof of \autoref{lemma_bijection}  proves the following:

\begin{lemma}
Let $(i,j)$ be a removable cell of a triangular partition $\tau$. Then there exists $(r,s)\in \mathcal{R}_\tau$ such that $(i,j)$ is the unique cell touched by the line $x/r+y/s=1$. 
\end{lemma}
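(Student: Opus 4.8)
The plan is to realize the desired line as a suitable point of the single hyperbolic wall $\mathcal{H}_{ab}$, where I set $(a,b):=(i+1,j+1)$. Recall from Section~\ref{section_lines} that $(r,s)\in\mathcal{H}_{ab}$ exactly when the line $x/r+y/s=1$ passes through $(a,b)$, i.e.\ when it touches the cell $(i,j)$, and that a point of $\mathcal{Q}$ lying on $\mathcal{H}_{ab}$ but on no other wall $\mathcal{H}_{a'b'}$ yields a line touching $(i,j)$ and no other cell. Hence it suffices to exhibit a point $(r,s)\in\mathcal{R}_\tau$ lying on $\mathcal{H}_{ab}$ and on none of the remaining hyperbolas. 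Such a point has $(i,j)$ on its (included, $\le 1$) boundary line, so it automatically cuts off $\tau$ rather than $\tau\setminus\{(i,j)\}$.

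Because $(i,j)$ is removable, the partition $\tau':=\tau\setminus\{(i,j)\}$ is again triangular, so both $\mathcal{R}_\tau^\circ$ and $\mathcal{R}_{\tau'}^\circ$ are nonempty. I would pick interior points $(r_0,s_0)\in\mathcal{R}_\tau^\circ$ and $(r_1,s_1)\in\mathcal{R}_{\tau'}^\circ$; their lines touch no cells, the first having $(i,j)$ strictly below it and the second having $(i,j)$ strictly above it, while for every other cell the two lines agree on whether it lies below. Mirroring the proof of \autoref{lemma_bijection}, I then connect these points by a monotone path and track the unique moment at which $(i,j)$ is removed. When the two lines do not meet in $\mathcal{Q}$ one may assume $r_0\ge r_1$ and $s_0\ge s_1$ and take the segment $(r_t,s_t)=(1-t)(r_0,s_0)+t(r_1,s_1)$: along it both coordinates are nonincreasing and not both constant, so for each cell the quantity $(i'+1)/r_t+(j'+1)/s_t$ is strictly increasing in $t$, whence cells are only ever removed. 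As the partition changes only by the loss of $(i,j)$, there is a single parameter $t^\ast$ with $a/r_{t^\ast}+b/s_{t^\ast}=1$; at $t^\ast$ the line touches $(i,j)$, and any other cell touched there would be removed immediately afterwards, which is impossible. Thus $(r_{t^\ast},s_{t^\ast})\in\mathcal{R}_\tau$ lies on $\mathcal{H}_{ab}$ alone and is the required point.

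The remaining case, when the lines of $(r_0,s_0)$ and $(r_1,s_1)$ meet at some non-integer $(u,v)\in\mathcal{Q}$, is where I expect the main difficulty, and I would treat it exactly as in the second half of \autoref{lemma_bijection}: both points lie on the hyperbola $(r-u)(s-v)=uv$, and moving along the arc joining them rotates the associated line monotonically about $(u,v)$ through an angle less than $\pi$. Each lattice point is then swept at most once, so ``swept'' coincides with ``net change of membership''; since only $(i,j)$ changes between $\tau$ and $\tau'$, the rotating line passes through $(a,b)$---and through no other lattice point---at a single instant, again producing a point of $\mathcal{R}_\tau$ on $\mathcal{H}_{ab}$ and on no other wall. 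The one technical point to pin down is precisely this monotonicity and transversality of the sweep (that the relevant arc subtends less than a half-turn and crosses $\mathcal{H}_{ab}$ transversally, so that $(i,j)$ is the only touched cell), after which both cases conclude uniformly.
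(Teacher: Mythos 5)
Your argument is correct, and its skeleton is the paper's: pick a generic line cutting off $\tau$ and one cutting off $\tau\setminus\{(i,j)\}$, then interpolate in the moduli space, splitting into the cases where the two lines do or do not meet in the positive quadrant. In the non-intersecting case your segment-plus-monotonicity argument is essentially identical to the paper's (which phrases the same fact as ``$(a,b)=(i+1,j+1)$ is the only integer point lying between the two lines''). The genuine divergence is the intersecting case. You run a dynamic sweep: the line rotates about $(u,v)$ along the hyperbola arc, each lattice point is swept at most once, hence only $(a,b)$ is ever crossed, and the critical instant gives the desired point of $\mathcal{R}_\tau$. The paper instead makes a static construction: it takes the line through $(a,b)$ and $(u,v)$ outright, observing that of the two triangles bounded by the starting lines, the one against the vertical axis contains exactly one lattice point, namely $(a,b)$ (assuming $a<u$), while the one against the horizontal axis contains none; since inside the quadrant this new line stays in the union of the two closed triangles, $(a,b)$ is the only lattice point on it, so the line cuts off $\tau$ and touches only $(i,j)$. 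The paper's route buys exactly what you flag as outstanding technical debt: there is nothing to verify about monotonicity, the size of the turn, or transversality. Your route does close, though, and the flagged points are easy to pin down: every line in the family has negative slope, so the total rotation is less than a quarter turn; the family is injectively and monotonically parametrized by the $x$-intercept $r$, so each line (hence each lattice point) occurs at most once along the arc; and a swept lattice point genuinely changes sides because it is distinct from the rotation center $(u,v)$, which is non-integral since your starting lines touch no cells. It is worth noting that the two constructions land on the same line in the end: at your critical instant the rotating line passes through both $(u,v)$ and $(a,b)$.
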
 

\begin{proof}
Consider $(r_0,s_0)$ such that the line $x/r_0+y/s_0=1$ cuts off $\tau\setminus \{(i,j)\}$, and $(r_1,s_1)$ such that the line $x/r_1+y/s_1=1$ cuts off $\tau$. If these lines do not intersect in the positive quadrant, then $r_0<r_1$, $s_0<s_1$, and $(a,b)=(i+1,j+1)$ is the only integer point that lies between the lines. Then for some $0<t<1$, with $r_t=r_0+t(r_1-r_0)$ and $s_t=s_0+t(s_1-s_0)$,  the line $x/r_t+y/s_t=1$  cuts off $\tau$ and touches the cell $(i,j)$, while not touching any other cells.

Suppose the lines $x/r_0+y/s_0=1$ and $x/r_1+y/s_1=1$ intersect at some positive point $(u,v)$. One can choose these lines so that they do not pass through integer points, ensuring that $(u,v)$ is not integral. Let us assume that $a=i+1<u$ (the case $a>u$ is obtained similarly). The triangle bounded by the lines and the vertical axis contains the unique integer point $(a,b)=(i+1,j+1)$, while the triangle bounded by the lines and the horizontal axis does not contain integer points. Therefore, the line connecting $(a,b)$ and $(u,v)$ cuts off $\tau$ and touches the unique cell $(i,j)$.
\end{proof}

The following Lemma can be observed directly, but it is especially natural from the point of view of the moduli space of lines $\mathcal{Q}$:

\begin{lemma}\label{lemma: even cycle}
Consider a line $x/r+y/s=1$. Let $(i_1,j_1),(i_2,j_2),\ldots,(i_k,j_k)$ be all the cells touched by $x/r+y/s=1$, ordered so that $i_1<i_2<\ldots<i_k$. Then the set of triangular partitions $\beta$ such that $(r,s)\in{\mathcal{R}}_\beta$ is the union of the following:
\begin{enumerate}
\item $\tau_{rs},$
\item $\beta:=\tau_{rs}-\{(i_1,j_1),(i_2,j_2),\ldots,(i_k,j_k)\},$
\item $\left\{\tau_{rs}-\{(i_l,j_l),(i_{l+1},j_{l+1}),\ldots,(i_k,j_k)\}\,|\,1<l\le k\right\},$
\item $\left\{\tau_{rs}-\{(i_1,j_1),(i_2,j_2),\ldots,(i_l,j_l)\}\,|\,1\le l< k\right\}.$
\end{enumerate}
\end{lemma}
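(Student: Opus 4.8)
The plan is to study the arrangement of hyperbolic walls in the moduli space $\mathcal{Q}$ in a small neighbourhood of the point $(r,s)$, and to read off the regions adjacent to it. Writing $(a_l,b_l):=(i_l+1,j_l+1)$, the touching condition $(i_l+1)/r+(j_l+1)/s=1$ says precisely that $(r,s)\in\mathcal{H}_{a_l b_l}$; conversely a wall $\mathcal{H}_{a'b'}$ passes through $(r,s)$ only when its cell is touched. Hence the walls through $(r,s)$ are exactly $\mathcal{H}_{a_1 b_1},\ldots,\mathcal{H}_{a_k b_k}$, and I may pick a neighbourhood $U$ of $(r,s)$ meeting no other wall. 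Since $\tau_{r's'}$ is locally constant on $\overline{\mathcal{Q}}$ (as in the proof of \autoref{lemma_bijection}), the partitions $\beta$ with $(r,s)\in\mathcal{R}_\beta$ are exactly those whose open region $\mathcal{R}_\beta^\circ$ occupies one of the sectors into which these $k$ walls cut $U$. The task thus reduces to enumerating these sectors and identifying the partition in each.

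First I would record the local geometry at $(r,s)$. Because the touched cells satisfy $i_1<\cdots<i_k$ and their north-east corners lie on the single decreasing line $x/r+y/s=1$, we have $a_1<\cdots<a_k$ and $b_1>\cdots>b_k$. Setting $g_l(r',s'):=(r'-a_l)(s'-b_l)-a_lb_l$, so that cell $(i_l,j_l)$ is present exactly where $g_l\ge 0$, the gradient at $(r,s)$ is $\nabla g_l(r,s)=(s-b_l,\,r-a_l)$, with both components positive since $a_l/r,b_l/s\in(0,1)$. The tangent line to $\mathcal{H}_{a_l b_l}$ therefore has negative slope $-(s-b_l)/(r-a_l)$, and as $l$ increases the numerator grows while the denominator shrinks, so these $k$ slopes are strictly decreasing. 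In particular the walls meet pairwise transversally at $(r,s)$, whence $U$ is cut into exactly $2k$ sectors, matching the arrangement of the $k$ tangent lines.

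Next I would track, sector by sector, which cells survive. For a direction $(dr,ds)$ issuing from $(r,s)$, cell $(i_l,j_l)$ is present, to first order, exactly when the linear form $L_l(dr,ds):=(s-b_l)\,dr+(r-a_l)\,ds$ is positive, that is when $(dr,ds)$ lies on the gradient side of the $l$-th tangent line. Ordering the tangent lines by their strictly decreasing slopes and rotating $(dr,ds)$ once around $(r,s)$, the present-set changes by exactly one cell at each crossing. Starting from the sector where every $L_l>0$ — the full partition $\tau_{rs}$ of family~(1) — and rotating so that cells leave in the order $k,k-1,\ldots,1$ (steepest tangent first), one passes successively through the suffix-removals of family~(3), reaches the partition with all touched cells removed (family~(2)), and then, continuing around, restores the cells in the order $k,k-1,\ldots$ to pass through the prefix-removals of family~(4) before returning to $\tau_{rs}$. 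This produces exactly the $2k$ distinct partitions listed in (1)--(4).

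The traversal bookkeeping is routine; the one point demanding care, and the crux of the argument, is the claim that the $k$ walls meet transversally at $(r,s)$ with strictly monotone tangent slopes. This is what guarantees that $U$ splits into $2k$ sectors occupied by $2k$ \emph{distinct} regions — rather than fewer — and that the present-set changes by a single touched cell at every crossing. Once the slope computation of the second paragraph is in hand that monotonicity is immediate, and the identification of the four families follows directly from the order in which cells leave and re-enter the present-set.
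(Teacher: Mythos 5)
Your proposal is correct and follows essentially the same route as the paper, which justifies the lemma by the same local analysis in the moduli space: the $k$ hyperbolic walls through $(r,s)$ cut a small neighbourhood into $2k$ sectors, whose regions give exactly the listed partitions. Your gradient/tangent-slope computation simply supplies the transversality and ordering details that the paper leaves implicit.
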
 

Indeed, there are $k$ hyperbolas intersecting at $(r,s)$, and these partitions correspond to the $2k$ connected components of $\overline{\mathcal{Q}}$ near $(r,s)$. Note that only two of the cells $(i_1,j_1),\ldots,(i_k,j_k)$ in $\tau_{rs}$ are removable: $(i_1,j_1)$ and $(i_k,j_k)$. In cases $(3)$ and $(4)$ of the Lemma the only removable cell touched by the line $x/r+y/s=1$ is $(i_l,j_l)$. Similarly, of the cells $(i_1,j_1),\ldots,(i_k,j_k)$ only $(i_1,j_1)$ and $(i_k,j_k)$ are addable for $\beta$, and in the cases $(3)$ and $(4)$ the only addable cells touched by the line $x/r+y/s=1$ are $(i_{l-1},j_{l-1})$ and $(i_{l+1},j_{l+1})$ respectively.

\section{Orders on triangular partitions}\label{section_order}
Two orders are of interest here, both obtained by restriction of classical orders on partitions to triangular partitions.
First, even though the \define{dominance order} is only a partial order on all partitions of a given size\footnote{Starting with size 6.}, its restriction to triangular partitions is a total order which we denoted by $\alpha \preceq \beta$. See \autoref{FigDominanceOrder} for an illustration of the difference between the two contexts. 
\begin{figure}[ht]
\includegraphics[width=.9\textwidth]{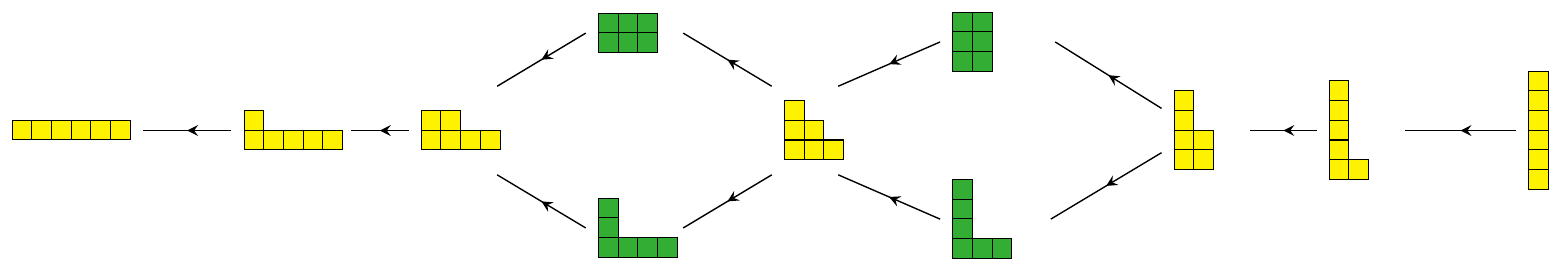}\\
\includegraphics[width=.9\textwidth]{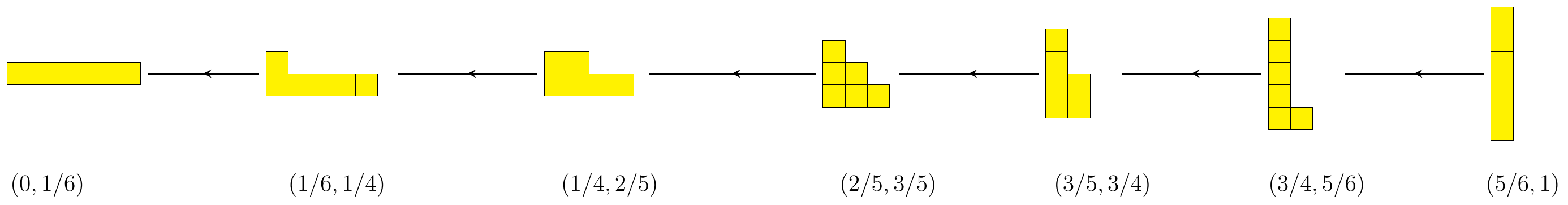}
\caption{The dominance order on all partitions of $6$, and its restriction to triangular ones.}
\label{FigDominanceOrder}
\end{figure}
It is easy to show that
\begin{lemma}\label{lemma: dominance order}
   For two same size triangular partitions $\alpha$ and $\beta$, we have $\alpha \prec \beta$ if and only if $t^+_\alpha\leq t^-_\beta$. Furthermore, $\beta$ covers $\alpha$ in dominance order if and only if $t^+_\alpha=t^-_\beta$.
\end{lemma}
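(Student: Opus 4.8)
The plan is to translate the dominance order on triangular partitions into the language of slope vectors, using the cone description from \autoref{lemma_slope}. Recall that the dominance order compares the partial sums of parts: $\alpha \preceq \beta$ means $\sum_{i\le m}\alpha_i \le \sum_{i\le m}\beta_i$ for all $m$. I would first reformulate this geometrically. Since $\alpha$ and $\beta$ have the same size $n$, the key observation is that the cells they do not share are organized along a single ``diagonal band''. Concretely, I expect that $\alpha \prec \beta$ holds precisely when the cells of $\beta$ lie (weakly) above-left of the cells of $\alpha$ in a way captured by their cutting lines: there is a slope direction along which one can slide $\alpha$ up to $\beta$.

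The first step is to prove the forward implication of the covering statement, which is the combinatorial heart. Suppose $\beta$ covers $\alpha$ in the dominance order restricted to triangular partitions. By \autoref{corollary: tripar of size n}, the triangular partitions of size $n$ are linearly ordered by their slope intervals $(t^-,t^+)$, and consecutive ones share a boundary value $t_i$. I would show that this linear order from \autoref{corollary: tripar of size n} coincides with the dominance order: as the slope parameter $t$ increases, the triangular partition of size $n$ associated to the interval containing $t$ moves up in dominance order. The natural way to see this is to use the nested family of cell sets $\{(i,j)\mid it+j(1-t)<d\}$ introduced just before \autoref{corollary: tripar of size n}; increasing $t$ tilts the cutting line so as to favor taller partitions (larger leg-to-arm ratios), which is exactly the direction of increasing dominance. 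Thus $\beta$ covering $\alpha$ forces $\beta=\tau_{i+1}$ and $\alpha=\tau_i$ for consecutive intervals, giving $t^+_\alpha=t_{i+1}=t^-_\beta$.

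For the general inequality $\alpha\prec\beta \iff t^+_\alpha\le t^-_\beta$, I would argue by transitivity along the chain of \autoref{corollary: tripar of size n}. Since that corollary exhibits every size-$n$ triangular partition as $\tau_i$ with $t^-_{\tau_i}=t_i$ and $t^+_{\tau_i}=t_{i+1}$, and since the $t_i$ are strictly increasing, the condition $t^+_\alpha\le t^-_\beta$ is equivalent to $\alpha=\tau_i$, $\beta=\tau_{j}$ with $i<j$, i.e.\ to $\alpha$ preceding $\beta$ in the chain order. Combining this with the identification of the chain order with dominance order from the previous step yields both directions at once. The covering statement then follows because $t^+_\alpha=t^-_\beta$ (rather than strict inequality $t^+_\alpha\le t^-_\beta$ with a gap) is exactly the condition that no intermediate $t_m$ separates the two intervals, hence no triangular partition lies strictly between them.

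The main obstacle I anticipate is rigorously establishing that the slope-chain order of \autoref{corollary: tripar of size n} matches the dominance order, rather than its reverse or some unrelated total order. The cleanest route is probably a direct local argument at a covering step: when passing from $\tau_i$ to $\tau_{i+1}$, exactly the cells touched by the critical line $(t_{i+1},1-t_{i+1})$ get rearranged, and by \autoref{lemma: even cycle} this rearrangement removes a cell low on the diagonal band and adds one higher up (decreasing a lower row, increasing a higher row), which strictly increases the dominance partial sums. Making this ``one cell moves up along the touched diagonal'' picture precise, and checking it always increases dominance, is where the real care is needed; the rest is bookkeeping with the strictly ordered $t_i$.
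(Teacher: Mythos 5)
Your overall strategy coincides with the paper's: invoke \autoref{corollary: tripar of size n} to get the slope-ordered chain of size-$n$ triangular partitions, analyze a critical value $t=t^+_\alpha=t^-_\beta$ through the cells touched by the corresponding line (in the spirit of \autoref{lemma: even cycle}), identify the two partitions flanking that value as ``remove the first $l$ touched cells'' versus ``remove the last $l$ touched cells,'' compare those two partitions in dominance directly, and let transitivity along the chain do the rest. The paper's proof is exactly this, made concrete: it takes the line with slope vector $(t,1-t)$ closest to the origin satisfying $|\tau_{rs}|>n$, lists the touched cells $(i_1,j_1),\ldots,(i_k,j_k)$ with $i_1<\cdots<i_k$, sets $l=|\tau_{rs}|-n<k$, and shows that the two size-$n$ partitions meeting at $t$ are $\tau_{rs}-\{(i_1,j_1),\ldots,(i_l,j_l)\}$ and $\tau_{rs}-\{(i_{k-l+1},j_{k-l+1}),\ldots,(i_k,j_k)\}$.

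The one step that would fail if expanded verbatim is precisely the one you flagged as needing ``real care,'' namely the orientation. You quote the standard definition ($\alpha\preceq\beta$ iff every partial sum of $\alpha$ is at most the corresponding one of $\beta$) and then assert that the local move accompanying an increase of $t$ --- deleting a cell in a low row and creating one in a higher row --- ``strictly increases the dominance partial sums.'' Under the definition you quoted, it does the opposite: moving a box from a longer (lower) row to a shorter (higher) row strictly decreases some partial sums and increases none. A concrete check: $t^+_{42}=t^-_{321}=2/5$, yet $42$ dominates $321$ (partial sums $4,6,6$ versus $3,5,6$), so increasing $t$ moves you \emph{down} in standard dominance; this is also what the paper's remark that clockwise order of slope vectors gives decreasing dominance order says. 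With the standard convention, the slope chain is therefore anti-isomorphic, not isomorphic, to dominance, and the displayed inequality would have to read $t^+_\beta\le t^-_\alpha$. To be fair, the paper's own proof makes the same orientation choice you do (it declares $\alpha\prec\beta$ for the partition keeping the south-east touched cells), so your proposal is faithful to the paper as written; but a rigorous write-up cannot simultaneously keep the partial-sum definition you state and the direction of the inequality you prove --- one of the two must be flipped, and you should decide which convention $\prec$ denotes before filling in the details.
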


\begin{proof}
According to \autoref{corollary: tripar of size n}, $(t,1-t)$ is not a slope vector of any triangular partition of size $n$ if and only if there exist triangular partitions $\alpha$ and $\beta$ of size $n$, such that $t=t^+_\alpha=t^-_\beta$. Let $x/r+y/s=1$ be the closest to the origin line with the slope vector $(t,1-t)$ such that $|\tau_{rs}|>n$. It follows then that $x/r+y/s=1$ touches more then one cell. Similar to \autoref{lemma: even cycle}, let $(i_1,j_1),(i_2,j_2),\ldots,(i_k,j_k)$ be all the cells touched by $x/r+y/s=1$, ordered so that $i_1<i_2<\ldots<i_k$. Let $l:=|\tau_{rs}|-n$. One gets $l<k$ by the definition of the line $x/r+y/s=1$.  Partition $\tau_{rs}-\{(i_1,j_1),\ldots,(i_l,j_l)\}$ is of size $n$ and can be cut off by a line with a slope vector $(t-\epsilon,1-t+\epsilon)$ for a small enough $\epsilon>0$. Indeed, one should rotate the line $x/r+y/s=1$ counterclockwise around the north-east corner of the cell $(i_{l+1},j_{l+1})$ a little. Therefore, $\alpha=\tau_{rs}-\{(i_1,j_1),\ldots,(i_l,j_l)\}$. By a similar consideration, one can obtain that $\beta=\tau_{rs}-\{(i_{k-l+1},j_{k-l+1}),\ldots,(i_k,j_k)\}$. Finally, one observes that $\alpha \prec \beta$ and the rest of the Lemma follows from \autoref{corollary: tripar of size n}.
\end{proof}

In simple terms, if one lists same size triangular partitions in clockwise order of their slope vectors, they will occur in decreasing dominance order. For size $6$, this is illustrated in \autoref{FigDominanceOrder}.
More properties of the dominance order on triangular partitions (with a different terminology) may be found in~\cite{MR2017036}.

Our second order of interest is the restriction of the containment order $\alpha\subseteq \beta$ to triangular partitions. This gives rise to the \define{Triangular Young poset}, which we denoted by $\YoungTriangle$. We denote the cover relation in $\YoungTriangle$ by $\alpha\youngdelta \beta$.

\begin{lemma}
Let $\alpha$ and $\beta$ be triangular partitions and $\alpha\subset \beta.$ Then one has $\alpha\youngdelta \beta$ if and only if $\alpha$ is obtained from $\beta$ by removing exactly one cell. In particular, $\YoungTriangle$ is ranked by the number of cells in a partition.
\end{lemma}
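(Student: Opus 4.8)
The plan is to prove the two implications separately, the reverse one being immediate. If $\alpha$ is obtained from $\beta$ by deleting a single cell, then no partition whatsoever lies strictly between $\alpha$ and $\beta$ for containment, so a fortiori no triangular one does, whence $\alpha\youngdelta\beta$. The whole content is therefore in the direct implication, which I would establish in contrapositive form: assuming $\alpha\subsetneq\beta$ are triangular with $|\beta|-|\alpha|\ge 2$, I will exhibit a triangular partition $\gamma$ with $\alpha\subsetneq\gamma\subsetneq\beta$, so that $\alpha\youngdelta\beta$ fails.

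To build $\gamma$, I would interpolate between cutting lines. Fix $(r_0,s_0)\in\mathcal{R}_\alpha^\circ$ and $(r_1,s_1)\in\mathcal{R}_\beta^\circ$, so that $x/r_0+y/s_0=1$ and $x/r_1+y/s_1=1$ cut off $\alpha$ and $\beta$ respectively while touching no cell. For $u\in[0,1]$ define the line $x/r_u+y/s_u=1$ by interpolating the reciprocal intercepts,
\[
\frac1{r_u}=\frac{1-u}{r_0}+\frac{u}{r_1},\qquad
\frac1{s_u}=\frac{1-u}{s_0}+\frac{u}{s_1},
\]
which keeps $r_u,s_u>0$, so each such line genuinely cuts off a triangular partition $\tau_{r_u s_u}$. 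The crucial point is that this family is monotone. Writing $f_u(x,y):=x/r_u+y/s_u=(1-u)f_0(x,y)+u\,f_1(x,y)$, the value $f_u(i+1,j+1)$ at the north-east corner of a cell $(i,j)$ is affine in $u$. Hence if $(i,j)\in\alpha$ both endpoint values are $<1$ and so is $f_u(i+1,j+1)$ for all $u$; if $(i,j)\notin\beta$ both endpoint values are $>1$ and the cell never appears; and if $(i,j)\in\beta\setminus\alpha$ the value drops strictly from $>1$ to $<1$, so the cell enters at a unique time and remains. Therefore the $\tau_{r_u s_u}$ are nested and increase from $\alpha$ at $u=0$ to $\beta$ at $u=1$, the cells of $\beta\setminus\alpha$ being added one after another.

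It remains to arrange that cells are added one at a time, which is where the only real difficulty lies: a line $x/r_u+y/s_u=1$ may pass through several lattice points simultaneously, which is exactly the ``rational slope'' phenomenon governed by \autoref{lemma: even cycle}. I would remove this by a genericity argument: the condition that two prescribed cells of $\beta\setminus\alpha$ be added at the same time $u$ cuts out a lower-dimensional subset of the choices of $(r_0,s_0)\in\mathcal{R}_\alpha^\circ$ and $(r_1,s_1)\in\mathcal{R}_\beta^\circ$, so after an arbitrarily small perturbation of the two endpoints all the entry times become distinct. Then the family $\tau_{r_u s_u}$ realizes a saturated chain of triangular partitions from $\alpha$ to $\beta$ in which each step adds exactly one cell, and the first proper term is the desired intermediate $\gamma$. (Alternatively, a simultaneous touching can be resolved directly through cases $(3)$ and $(4)$ of \autoref{lemma: even cycle}, which already produce triangular partitions strictly between the two.) This proves the cover characterization, and the final assertion is then immediate: since every cover relation increases the number of cells by exactly one, $|\cdot|$ is a rank function for $\YoungTriangle$.
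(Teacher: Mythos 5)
Your proof is correct, and while it shares the paper's overall skeleton (connect a cutting line for $\alpha$ to one for $\beta$ by a path in the moduli space of lines, argue monotonicity, then deal with several cells being added at once), it differs in the two places where the real work happens. First, the interpolation: you interpolate linearly in the coefficients $(1/r,1/s)$ of the line equation, so that $f_u=(1-u)f_0+uf_1$ is affine in $u$ and monotonicity of the family $\tau_{r_us_u}$ becomes an immediate convexity statement, uniform over all cells. The paper instead follows the path construction of \autoref{lemma_bijection} in $(r,s)$-coordinates, which forces a case split (a straight segment when the two lines do not meet in the positive quadrant, a rotation about the intersection point when they do) and a more geometric monotonicity argument; your parametrization eliminates that dichotomy entirely. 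Second, the resolution of simultaneous additions: the paper argues by contradiction from the cover relation (all cells of $\beta\setminus\alpha$ would have to be added at a single instant, so the line at that instant touches them all) and then invokes \autoref{lemma: even cycle} to insert intermediate triangular partitions; you instead argue the contrapositive and use a genericity perturbation of the endpoints so that entry times are pairwise distinct, keeping the even-cycle lemma only as a fallback. Your genericity claim is true but asserted rather than checked: it deserves the short computation showing that, for a fixed $(r_1,s_1)$ and two distinct cells $c_1\neq c_2$ of $\beta\setminus\alpha$, the coincidence of entry times is a single affine equation in $(1/r_0,1/s_0)$ whose coefficients cannot all vanish unless $c_1=c_2$, so the bad locus is a finite union of lines and any nearby generic choice in the open set $\mathcal{R}_\alpha^\circ$ avoids it. With that detail supplied, your argument is self-contained (independent of \autoref{lemma: even cycle}), at the cost of a transversality argument where the paper reuses structural lemmas it has already established for other purposes in the section.
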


\begin{proof}
Clearly, if $\alpha$ is obtained from $\beta$ by removing exactly one cell, then $\alpha\youngdelta \beta.$ Suppose now that $\alpha\youngdelta \beta$ and $|\beta|>|\alpha|+1.$ Let $\alpha=\tau_{r_0s_0}$ and $\beta=\tau_{r_1s_1}.$ Similar to Lemma \ref{lemma_bijection}, let us connect the points $(r_0,s_0)$ and $(r_1,s_1)$ of the moduli space $\mathcal{Q}$ by a path $(r_t,s_t)$ as follows: if the corresponding lines do not intersect inside the positive quadrant, then take the straight line $(r_t,s_t)=(r_0+t(r_1-r_0),s_0+t(s_1-s_0));$ if they intersect, then take they rotation around the point of intersection. Observe that in both cases the partition $\tau_{r_ts_t}$ can only increase as $t$ increases. Indeed, even for the rotation, if a cell gets removed, it is not going to be added back later, which contradicts $\tau_{r_0s_0}=\alpha\subset\beta=\tau_{r_1s_1}.$ Since $\alpha\youngdelta \beta$ it follows that all cells from $\beta-\alpha$ have to be added simultaneously, at a certain value $0<t<1,$ which means that the line $x/r_t+y/s_t=1$ touches all the cells of $\beta-\alpha.$ But then Lemma \ref{lemma: even cycle} guarantees that the cells, in fact, can be added one by one. Contradiction. 
\end{proof}

 
\begin{corollary}
Modulo the map $\rho:\mathcal{C}\to \mathcal{T}$ from \autoref{lemma_bijection}, the cover relation of $\YoungTriangle$ corresponds to crossing an hyperbolic wall at a generic point.
\end{corollary}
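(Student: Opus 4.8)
The plan is to establish a two-way correspondence between covers $\alpha\youngdelta\beta$ in $\YoungTriangle$ and generic single-wall crossings, using the bijection $\rho$ of \autoref{lemma_bijection} together with the two preceding lemmas. Throughout I identify a triangular partition $\tau$ with the component $\rho^{-1}(\tau)=\mathcal{R}_\tau^\circ$ of $\overline{\mathcal{Q}}$, and I recall the dictionary already set up in Section \ref{section_lines}: the wall $\mathcal{H}_{ab}$ records the cell $(i,j)=(a-1,b-1)$, and crossing $\mathcal{H}_{ab}$ toward the origin deletes that cell. A \emph{generic point} of a wall will mean a point of $\mathcal{H}_{ab}$ lying on no other hyperbola, equivalently a point $(r,s)$ whose line $x/r+y/s=1$ touches the single cell $(a-1,b-1)$.

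\textbf{Forward direction.} Suppose $\alpha\youngdelta\beta$. By the preceding lemma on the cover relation, $\beta=\alpha\sqcup\{(i,j)\}$ for a single cell, so $(i,j)$ is a removable cell of $\beta$. The lemma on removable cells then furnishes a point $(r,s)\in\mathcal{R}_\beta$ whose line $x/r+y/s=1$ touches $(i,j)$ and no other cell. Writing $(a,b)=(i+1,j+1)$, this says precisely that $(r,s)\in\mathcal{H}_{ab}$ and that $(r,s)$ lies on no other hyperbola; that is, $(r,s)$ is a generic point of the wall $\mathcal{H}_{ab}$. The two local components of $\overline{\mathcal{Q}}$ abutting $(r,s)$ are sent by $\rho$ to $\tau_{rs}=\beta$ on the side away from the origin and to $\beta\setminus\{(i,j)\}=\alpha$ on the side toward the origin. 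Thus the cover is realized by crossing the single wall $\mathcal{H}_{ab}$ at the generic point $(r,s)$.

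\textbf{Reverse direction.} Conversely, fix a generic point $(r,s)$ of a wall $\mathcal{H}_{ab}$, and let $\beta$ and $\alpha$ be the partitions of the components on the far and near (to the origin) sides, so that $\rho$ sends the crossing toward the origin from $\rho^{-1}(\beta)$ into $\rho^{-1}(\alpha)$. Genericity means the line $x/r+y/s=1$ touches exactly the cell $(i,j)=(a-1,b-1)$; this is the case $k=1$ of \autoref{lemma: even cycle}, whose list of adjacent partitions collapses to $\tau_{rs}=\beta$ and $\tau_{rs}\setminus\{(i,j)\}=\alpha$. Hence $\alpha=\beta\setminus\{(i,j)\}$, and the preceding cover lemma gives $\alpha\youngdelta\beta$.

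\textbf{Main obstacle.} The delicate point is not any computation but the careful matching of local geometry with the poset relation: I must confirm that at a generic point the two sides of the wall are exactly the components $\rho^{-1}(\alpha)$ and $\rho^{-1}(\beta)$, with $\alpha$ on the origin side, and that genericity coincides with the condition ``a unique cell is touched.'' Both facts are already packaged in the removable-cell lemma and in the $k=1$ specialization of \autoref{lemma: even cycle}, so the task is to invoke them with the correct orientation rather than to prove anything new; the remaining bookkeeping, namely that no other wall is crossed and hence no other cell changes, is immediate from genericity.
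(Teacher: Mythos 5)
Your proof is correct and follows essentially the route the paper intends: the paper states this corollary without an explicit proof, treating it as an immediate consequence of the cover-relation lemma, the removable-cell lemma, and \autoref{lemma: even cycle}, which are exactly the three ingredients you invoke. Your write-up simply makes explicit (with the right orientation and the $k=1$ specialization) what the paper leaves implicit.
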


In other words, the Hasse diagram of $\YoungTriangle$ is planar: for every triangular partition $\tau$ one can draw the vertex corresponding to it at the center of the region $\mathcal{R}_\tau,$ and then connect vertices in the neighboring regions by crossing the shared pieces of the boundary at simple points. Note that the resulting drawing of the Hasse diagram satisfies an additional condition:

\begin{lemma}\label{lemma: planarity}
For any interval $[\alpha,\beta]\subset\YoungTriangle,$ the vertices corresponding to $\alpha$ and $\beta$ in the planar presentation of the Hasse diagram of $[\alpha,\beta]$ obtained by restricting the above construction to $[\alpha,\beta]$ both belong to the boundary of the unbounded region. In particular, the graph obtained from  the Hasse diagram of $[\alpha,\beta]$ by adding an extra edge connecting $\alpha$ to $\beta$ is planar.  
\end{lemma}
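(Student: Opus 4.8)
The plan is to argue entirely inside the moduli space $\mathcal{Q}$, using the bijection $\rho$ of \autoref{lemma_bijection} together with the preceding corollary, which places the vertex $\gamma$ at the center of the region $\mathcal{R}_\gamma$ and draws each cover $\gamma\youngdelta\gamma'$ as an arc crossing, at a generic point, the hyperbolic wall between $\mathcal{R}_\gamma$ and $\mathcal{R}_{\gamma'}$. The decisive observation is that every edge of the \emph{restricted} drawing joins two regions whose labels both lie in $[\alpha,\beta]$. Hence a path in $\mathcal{Q}$ meets an edge of this drawing only when it passes, at a generic point of a single wall, directly from one interval region into another interval region. Consequently, to show that $\alpha$ and $\beta$ lie on the boundary of the unbounded face it suffices to exhibit, from a generic interior point of $\mathcal{R}_\alpha$ (resp.\ $\mathcal{R}_\beta$), a path escaping to infinity that never crosses such a wall-between-two-interval-regions.

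For the minimum $\alpha$ I would fix a generic $p=(r,s)\in\mathcal{R}_\alpha^\circ$ and follow the radial segment $\lambda p$ with $\lambda$ decreasing from $1$ toward $0$, i.e.\ toward the origin. Since $(i,j)\in\tau_{rs}$ iff $(i+1)/r+(j+1)/s\le 1$, scaling $(r,s)$ down by $\lambda<1$ only makes this inequality harder to satisfy, so $\tau_{\lambda p}\subseteq\tau_p=\alpha$ and shrinks monotonically as $\lambda$ decreases. Thus once the segment leaves $\mathcal{R}_\alpha$ it meets only regions labelled by partitions strictly contained in $\alpha$, none of which lie in $[\alpha,\beta]$; in particular every wall it crosses, including the first one out of $\mathcal{R}_\alpha$, has at least one side outside the interval and so is not an edge of the restricted drawing. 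A generic choice of $p$ keeps the segment away from the multiple-intersection points of the arrangement, so no edge is ever met. As all hyperbolic walls lie in the open first quadrant and, for $\alpha\neq\varnothing$, a neighborhood of the origin carries only proper subpartitions of $\alpha$, that neighborhood lies in the unbounded face; hence $\alpha$ lies on its boundary. The case $\alpha=\varnothing$ is immediate, since $\mathcal{R}_\varnothing$ already abuts the origin.

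For the maximum $\beta$ the argument is symmetric: from a generic $q\in\mathcal{R}_\beta^\circ$ I would follow the ray $\lambda q$ with $\lambda$ increasing to $\infty$. Now scaling up only adds cells, so $\tau_{\lambda q}$ grows monotonically and, after leaving $\mathcal{R}_\beta$, meets only partitions strictly containing $\beta$, again all outside $[\alpha,\beta]$. As before no edge of the restricted drawing is crossed, and the ray reaches infinity, placing $\beta$ on the unbounded face as well. Since $\alpha$ and $\beta$ both lie on the boundary of the single unbounded face, an arc joining them may be drawn inside that face without meeting any edge, yielding a planar embedding of the Hasse diagram of $[\alpha,\beta]$ augmented by the edge $\alpha$–$\beta$.

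I expect the only genuinely delicate point to be the bookkeeping that ``crossing an edge of the restricted drawing'' coincides with ``passing generically between two regions both in $[\alpha,\beta]$''. This rests on the preceding corollary identifying covers with generic single-wall crossings, and on \autoref{lemma: even cycle} to understand the non-generic multiple points (around which the local picture is the $2k$-cycle described there); a generic escape path avoids those points altogether. The monotonicity of $\tau_{rs}$ under radial scaling is exactly what makes the two escape routes—toward the origin for $\alpha$, toward infinity for $\beta$—manifestly leave the interval after their first step and never return.
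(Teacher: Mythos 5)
Your proposal is correct and takes essentially the same route as the paper: both arguments live in the moduli space drawing and show that $\alpha$ and $\beta$ touch the unbounded face because the territory below $\mathcal{R}_\alpha$ and above $\mathcal{R}_\beta$ consists of regions labelled by partitions outside $[\alpha,\beta]$, which no edge of the restricted Hasse diagram can enter. The only difference is one of completeness: the paper simply asserts that non-interval regions lie in the unbounded face and that $\mathcal{R}_\alpha$, $\mathcal{R}_\beta$ share boundary with such regions, whereas your radial-scaling monotonicity (the partition $\tau_{\lambda r,\lambda s}$ shrinks as $\lambda$ decreases and grows as $\lambda$ increases) supplies an explicit edge-avoiding escape path to the origin strip, respectively to infinity, thereby proving the connectivity claim the paper leaves implicit.
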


\begin{proof}
Indeed, the regions corresponding to the partitions not in the interval $[\alpha,\beta]$ are inside the unbounded region of the Hasse diagram of $[\alpha,\beta],$ and unless $\alpha=\emptyset,$ both $\mathcal{R}_\alpha$ and $\mathcal{R}_\beta$ share boundaries with regions corresponding to partitions outside of $[\alpha,\beta].$ If $\alpha=\emptyset$ then the corresponding vertex is also clearly on the boundary of the unbounded region.
\end{proof}

Lemma \ref{lemma: dominance order} also implies that

\begin{corollary}
In the drawing of the Hasse diagram as above the triangular partitions with the same number of cells appear in the decreasing dominance order, from left to right.
\end{corollary}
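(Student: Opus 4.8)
The plan is to translate this purely combinatorial statement into a monotonicity statement in the moduli space $\mathcal{Q}$, and then feed it into Lemma~\ref{lemma: dominance order}. Recall that in the drawing each triangular partition $\tau$ sits at the center of its region $\mathcal{R}_\tau$, and that ``from left to right'' refers to the order of the $r$-coordinates of these centers (the logarithmic scale of the picture is irrelevant, being an increasing reparametrization of each axis). A line $x/r+y/s=1$ is orthogonal to $(1/r,1/s)$, so after normalization its slope vector is $(t,1-t)$ with $t=s/(r+s)$; thus $t$ is a strictly increasing function of the ratio $s/r$. Consequently it suffices to prove that, among the regions of a fixed size $n$, the one with larger slope interval $(t^-_\tau,t^+_\tau)$ lies to the left, since Lemma~\ref{lemma: dominance order} identifies the order of the slope intervals with the dominance order: if the slope interval of $\alpha$ lies entirely above that of $\beta$, then $\beta\prec\alpha$.

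First I would record the basic monotonicity of the construction: for fixed $s$ the partition $\tau_{rs}$ is non-decreasing in $r$, and for fixed $r$ it is non-decreasing in $s$, because raising either intercept only enlarges the staircase region lying under the line. From this I extract the fact that the horizontal order is well defined. Fix $r_0$ and move up the vertical line $r=r_0$: the size $|\tau_{r_0,s}|$ is non-decreasing in $s$, so the set of $s$ with $|\tau_{r_0,s}|=n$ is an interval, and on it $\tau_{r_0,s}$ cannot change, since any change would add a cell and raise the size. Hence every vertical line meets at most one size-$n$ region, so the $r$-projections of distinct size-$n$ regions are pairwise disjoint intervals, and the size-$n$ regions are genuinely totally ordered from left to right. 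The symmetric argument with horizontal lines shows that their $s$-projections are disjoint as well.

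Next I would pin down the direction. Joint monotonicity forbids one size-$n$ region from lying weakly to the upper right of another: if a point $(r_A,s_A)$ of region $A$ had $r_A\ge r_B$ and $s_A\ge s_B$ relative to a point $(r_B,s_B)$ of region $B$, with a strict inequality somewhere, then $\tau_{r_As_A}$ would strictly contain $\tau_{r_Bs_B}$, contradicting equality of sizes. Therefore larger $r$ forces smaller $s$, and the size-$n$ regions are arranged along an anti-diagonal, with the $r$-order exactly the reverse of the $s$-order. So if $A$ lies to the left of $B$, then every point of $A$ has smaller $r$ and larger $s$ than every point of $B$; hence $s/r$, and therefore the slope $t$, is strictly larger throughout $A$ than throughout $B$, i.e. the slope interval of $A$ lies entirely above that of $B$. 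By Lemma~\ref{lemma: dominance order} this gives $B\prec A$, which is exactly the assertion that same-size partitions appear in decreasing dominance order from left to right.

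The only delicate point is conceptual rather than computational: the regions are two-dimensional, so ``left to right'' must first be made meaningful. The disjointness of the $r$-projections (hence the total horizontal order) together with the anti-diagonal arrangement are what guarantee that the leftmost-to-rightmost order of the vertices coincides with the order of the slope intervals; once this is in place, Lemma~\ref{lemma: dominance order} does the rest. I expect the slight care needed to argue that constant size forces a constant partition along each axis-parallel line, and that the $r$- and $s$-orders are exact reverses, to be the main obstacle, though it remains routine.
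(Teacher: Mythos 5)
Your proof is correct and takes essentially the same route as the paper: the paper gives no separate argument, presenting the corollary as an immediate consequence of Lemma~\ref{lemma: dominance order} combined with the construction of the drawing (vertices placed in their regions of the moduli space, so that left-to-right position among same-size regions corresponds to the order of slope intervals), and your writeup is exactly that deduction with the geometric bookkeeping (disjoint $r$-projections, anti-diagonal arrangement) made explicit. One small repair: a weak coordinatewise inequality only yields the weak containment $\tau_{r_Bs_B}\subseteq\tau_{r_As_A}$, and the strictness you invoke needs the fact that distinct regions carry distinct partitions, i.e. the injectivity in Lemma~\ref{lemma_bijection}, which you should cite at that step.
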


There is a subtle difference between the planarity of the Hasse diagram as an abstract graph and its planarity as a Hasse diagram: in a Hasse diagram the edges are required to be drawn going upward from the smaller element to the bigger element, with respect to a chosen direction. See \cite{MR0429672} for a detailed discussion of the planarity of Hasse diagrams and its connection to the condition in Lemma \ref{lemma: planarity}.

\begin{lemma}
The poset $\YoungTriangle$ is a lattice.
\end{lemma}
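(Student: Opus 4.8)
The plan is to deduce the lattice property from two facts already in hand: that $\YoungTriangle$ has bottom element $\emptyset$ with all principal order ideals $[\emptyset,\alpha]$ finite, and that each interval is planar in the strong sense of Lemma \ref{lemma: planarity}. First I would record that any two triangular partitions $\alpha,\beta$ admit a common upper bound: for $N$ large the staircase $\tau_{NN}=(N-1,N-2,\ldots,1,0)$ is triangular and contains every cell $(i,j)$ with $i+j\le N-2$, hence contains both $\alpha$ and $\beta$ as soon as $N$ exceeds $\max_{(i,j)\in\alpha\cup\beta}(i+j)+2$. Thus $\YoungTriangle$ is bounded below, has finite principal ideals, and every pair has an upper bound.

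Next I would reduce everything to a single finite interval. Fix $\alpha,\beta$ and a common upper bound $\delta$. Every common lower bound of $\alpha$ and $\beta$ is contained in $\alpha\subseteq\delta$, and any common upper bound that is least must be contained in $\delta$; so any meet or join of $\alpha,\beta$ must already lie in the finite interval $[\emptyset,\delta]$. It therefore suffices to construct the meet and join inside this interval and to check they are global. The latter is routine: enlarging $\delta$ to $\delta'\supseteq\delta$ only enlarges the set of candidate upper bounds, and a short comparison (using that the join computed in the larger interval is again $\subseteq\delta$) shows the two joins coincide; the meet is visibly independent of $\delta$, since the common lower bounds are unchanged.

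The heart of the argument is then that each finite interval $[\alpha,\beta]$ of $\YoungTriangle$ is a lattice. Here I would invoke Lemma \ref{lemma: planarity}: the Hasse diagram of $[\alpha,\beta]$ can be drawn with $\alpha$ and $\beta$ on the boundary of the unbounded face, so that adjoining an edge from $\alpha$ to $\beta$ keeps it planar. By the classical criterion discussed in~\cite{MR0429672} --- a finite poset with $\hat 0$ and $\hat 1$ whose order diagram becomes planar after adjoining a $\hat 0$--$\hat 1$ edge is a lattice, since the failure of a meet or a join forces a $K_{3,3}$-subdivision routed through $\hat 0,\hat 1$ and the two offending bounds --- the interval $[\alpha,\beta]$ is a lattice. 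Combined with the reduction above, every pair in $\YoungTriangle$ has a meet and a join.

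The step I expect to be the real obstacle is the interval case, namely ruling out two incomparable minimal upper bounds of $\alpha$ and $\beta$. The cleanest self-contained alternative, should one wish to avoid citing the planarity criterion, is geometric via the moduli space $\mathcal{Q}$: the common upper bounds of $\alpha,\beta$ correspond exactly to the lines $(r,s)$ lying in
$$U \;=\; \bigcap_{(a,b)} \Bigl\{(r,s)\in\mathcal{Q}\ \Big|\ \tfrac{a}{r}+\tfrac{b}{s}\le 1\Bigr\},$$
the intersection ranging over the outer corners $(a,b)$ of the Young union $\alpha\cup\beta$. Each set in this intersection is convex, being a sublevel set of the convex function $a/r+b/s$, and upward closed, so $U$ is a nonempty closed convex upward-closed region whose origin-facing boundary is a single convex arc. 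One then argues, exactly as in Lemmas \ref{lemma_bijection} and \ref{lemma: even cycle}, that the region of $\overline{\mathcal{Q}}$ meeting this arc closest to the origin is unique, yielding a unique minimal common upper bound; this is the join, and the meet then comes for free as the join of the finite set of common lower bounds. I expect the bookkeeping of which regions touch the convex frontier of $U$ --- and the verification that convexity genuinely forces a single lowest region --- to be the most delicate part.
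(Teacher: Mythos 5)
Your overall strategy is the same as the paper's: both arguments rest on Lemma \ref{lemma: planarity} combined with the non-planarity of $K_{3,3}$, and your preliminary steps (a large staircase as common upper bound, reduction of meets and joins to a finite interval) are correct and correspond to the paper's opening remarks. The genuine gap is the step you label ``classical.'' The reference \cite{MR0429672} (Platt) proves that a finite \emph{lattice} is planar if and only if the graph obtained from its Hasse diagram by adding a $\hat 0$--$\hat 1$ edge is planar; the lattice property is a \emph{hypothesis} of that theorem, so it cannot be invoked to conclude that $[\alpha,\beta]$ \emph{is} a lattice (the paper cites it only for the distinction between graph-planarity and diagram-planarity). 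The poset-level criterion you need --- a finite poset with $\hat 0,\hat 1$ whose Hasse diagram plus the $\hat 0$--$\hat 1$ edge is planar must be a lattice --- is true, but proving it is precisely the content of the paper's proof, and your parenthetical justification hides the real difficulty: if $a,b$ have two minimal upper bounds $c\neq d$, the saturated chains from $a,b$ up to $c,d$ and the chains down to $\hat 0$ and up to $\hat 1$ may share interior vertices, so a $K_{3,3}$ cannot simply be ``routed through $\hat 0,\hat 1$ and the two offending bounds.'' The paper's proof consists exactly in repairing this: it replaces the offending pair by elements $\alpha,\beta$ chosen \emph{maximal} along chains subject to containment in the opposite bound, replaces $\hat 0,\hat 1$ by a maximal lower bound $T$ of $\{\alpha,\beta\}$ and a minimal upper bound $\tau$ of $\{A,B\}$, and verifies from these extremality properties that all eight chains are pairwise internally disjoint; only then does Lemma \ref{lemma: planarity} supply the ninth arc from $T$ to $\tau$, producing a plane $K_{3,3}$. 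Without this construction, or an actual reference proving the poset version, your main route is incomplete.

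Your fallback geometric argument does not close the gap either, as you partly anticipate. The convexity of your region $U$ is correct (each constraint is a sublevel set of the convex function $a/r+b/s$), but convexity alone does not force a unique ``lowest'' region: already for $\alpha=(2)$, $\beta=(1,1)$ the lower boundary of $U$ meets the closures of infinitely many regions ($\mathcal{R}_{(2,1)}$, $\mathcal{R}_{(3,1)}$, $\mathcal{R}_{(2,1,1)}$, and so on), and identifying the unique one that is \emph{minimal among common upper bounds} is equivalent to the statement being proved. Note also that no shortcut through intersections is available: the intersection of two triangular partitions need not be triangular --- for instance $(4,2)\cap(2,2,1,1)=(2,2)$, which is not triangular --- which is one reason the paper resorts to the topological argument in the first place.
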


\begin{proof}
Since $\YoungTriangle$ is bounded from below, it suffices to show that the join operation is well defined. Clearly, for any two triangular partitions $\alpha'$ and $\beta'$ there exist a common upper bound, i.e. a triangular partition $\tau$ such that $\alpha'\subset\tau$ and $\beta'\subset\tau.$ Therefore, we only need to prove that the minimal upper bound is unique. 

Suppose that $A\neq B$ are two minimal upper bounds for $\alpha'$ and $\beta'.$ Consider a saturated chain connecting $\alpha'$ to $A,$ and let $\alpha$ be the maximal element on that chain such that $\alpha\subset B.$ Similarly, consider a saturated chain connecting $\beta'$ to $B,$ and let $\beta$ be the maximal element on that chain such that $\beta\subset A.$ Consider also saturated chains connecting $\alpha$ to $B$ and $\beta$ to $A.$ Note that all four chains connecting $\alpha$ and $\beta$ to $A$ and $B$ cannot intersect each other except at the ends. In particular, they correspond to non-intersecting paths on the Hasse diagram.

Let $\tau$ be a minimal upper bound for $A$ and $B,$ let $T$ be a maximal lower bound for $\alpha$ and $\beta,$ and consider saturated chains connecting $T$ to $\alpha$ and $\beta,$ and $A$ and $B$ to $\tau.$ Note that the corresponding paths on the Hasse diagram do not intersect between each other or with the previously constructed four paths (except at the ends). Finally, Lemma \ref{lemma: planarity} guaranties that if we restrict the drawing of the Hasse diagram to the interval $[T, \tau]\subset \YoungTriangle,$ than the vertices corresponding to $\tau$ and $T$ are going to be on the boundary of the unbounded region, and, therefore, can be connected by an extra path not intersecting any edges of the Hasse diagram of the interval $[T, \tau].$ Thus we obtained a planar presentation of a $K_{3,3}$ graph: every vertex of $\tau, \alpha, \beta$ is connected to every vertex of $T,A,B,$ and all the connecting paths do not intersect each other except at the ends. Contradiction.
\end{proof}

\begin{figure}
\centering
\begin{subfigure}[t]{3.2in}
\centering
 \begin{tikzpicture}
	\node at (0,0) {  \includegraphics[width=.9\textwidth,height=6cm]{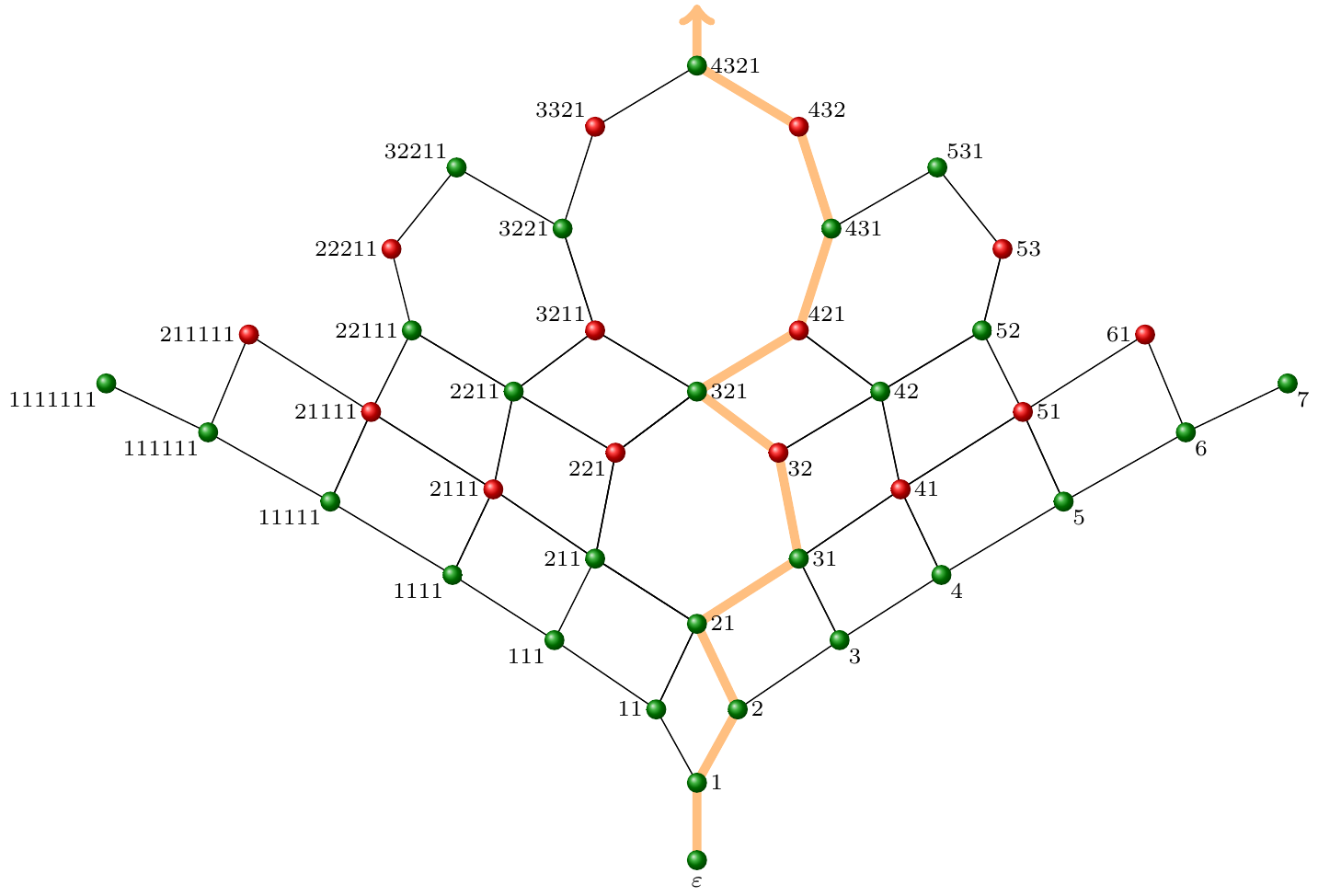}};
\end{tikzpicture}
	\caption{With labeled nodes}\label{Fig_Triangular_Young:1a}
\end{subfigure}
\begin{subfigure}[t]{3.2in}
\centering
   \begin{tikzpicture}
     \node[anchor=south west,inner sep=0] (image) at (-2.1,-2.1) {\includegraphics[width=.9\textwidth,height=6cm]{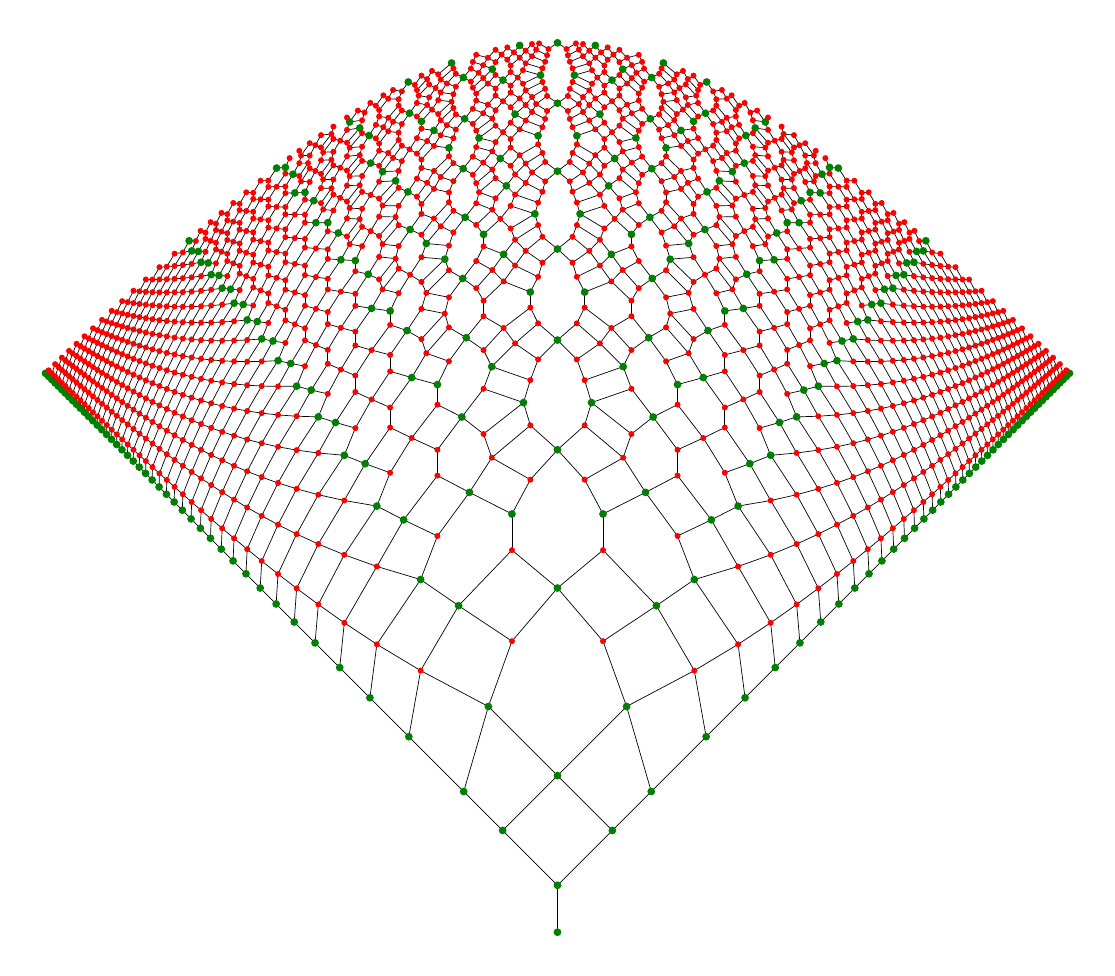}};
    \upnode[60pt]{image}\end{tikzpicture}
 	\caption{Larger portion}\label{Fig_Triangular_Young:1b}
\end{subfigure}
 \caption{Bottom of Triangular Young poset $\YoungTriangle$ (red vertices non-integral).}
 	\label{Fig_Triangular_Young}
\end{figure}

A lower portion of this Hasse diagram is illustrated in \autoref{Fig_Triangular_Young}. Its dual corresponds to \autoref{Fig_Triangular_Young_dual}. 
For obvious reasons, nodes and regions are unlabelled in the larger right-hand side images. The larger image of the poset displays all triangular partitions of $n\leq 45$. Its vertices are equal to $n\,v_\tau$ (up to a logarithmic rescaling and $\pi/4$ rotation), with  $v_\tau$ the standard\footnote{See definition after \autoref{lemma_slope}.} slope vector of $\tau$. The lattice $\YoungTriangle$ is not a distributive, as illustrated by the fact that we have $221\vee (32\wedge 211)\neq (221\vee 32)\wedge (221\vee 211)$.

Let us prove a few more results about the triangular Young poset $\YoungTriangle$ and the structure of the moduli space of lines $\mathcal{Q}.$

\begin{lemma}\label{lemma: no more than two}
No triangular partition can have more than $2$ removable cells. Similarly, no triangular partition can have more than $2$ addable cells.
\end{lemma}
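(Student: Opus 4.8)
The plan is to reduce the statement to a one-dimensional counting problem in the cone of slope vectors, and then to a sharp inequality between consecutive outer corners. By the preceding Lemma, a cell of $\tau$ is removable if and only if there is a point $(r,s)\in\mathcal{R}_\tau$ whose line $x/r+y/s=1$ touches that cell and no other; conversely, crossing the corresponding hyperbolic wall toward the origin removes exactly that cell and leaves a triangular partition. Using the functional $(i,j)\mapsto t\,i+(1-t)\,j$, whose level lines are the cutting lines of slope vector $(t,1-t)$, this says that the removable cells are precisely the outer corners $c$ whose north-east vertex is the \emph{unique} maximizer over the cells of $\tau$ for some $t$ in the open cone $(t^-_\tau,t^+_\tau)$ supplied by \autoref{lemma_slope}. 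Since the maximizing vertex moves monotonically to the right as $t$ increases, the removable corners are exactly the strict upper-hull vertices of the north-east corner set whose open normal cone meets $(t^-_\tau,t^+_\tau)$; consequently the number of removable cells equals $1$ plus the number of \emph{transition slopes} (slopes of a line through two consecutive strict hull corners) lying strictly inside the cone. So it suffices to show that at most one transition slope lies strictly inside $(t^-_\tau,t^+_\tau)$.

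To this end I would argue by contradiction: suppose three consecutive removable corners $c_1,c_2,c_3$, with strictly upper-convex vertices, producing two transition slopes $\sigma_1<\sigma_2$, both forced into the interior, so that $t^-_\tau<\sigma_1$ and $\sigma_2<t^+_\tau$. Writing $c_k=(x_k,y_k)$ and setting $A_1=x_2-x_1$, $B_1=y_1-y_2$, $A_2=x_3-x_2$, $B_2=y_2-y_3$, one has $\sigma_1=B_1/(A_1+B_1)$ and $\sigma_2=B_2/(A_2+B_2)$. The key move is to test the two interior cells $e_1=(x_1,y_2)$ and $e_2=(x_2,y_3)$, whose arm and leg are immediate: $e_1$ has arm $A_1$ and leg $B_1$, while $e_2$ has arm $A_2$ and leg $B_2$. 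Then $t''_\tau(e_1)=(B_1+1)/(A_1+B_1+1)$ and $t'_\tau(e_2)=B_2/(A_2+B_2+1)$, so that $t''_\tau(e_1)\le\sigma_2$ would force $t^+_\tau\le\sigma_2$, and $t'_\tau(e_2)\ge\sigma_1$ would force $t^-_\tau\ge\sigma_1$; either inequality contradicts interiority. A short computation shows this already disposes of the cases $A_2=1$ or $B_1=1$.

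The remaining case, in which $t''_\tau(e_1)>\sigma_2$ and $t'_\tau(e_2)<\sigma_1$ hold \emph{simultaneously}, is where I expect the real difficulty to lie: it forces $A_2,B_1\ge 2$ together with the integer $\delta:=A_1B_2-A_2B_1$ obeying $1\le\delta\le\min(A_2,B_1)-1$. I would close this either by a minimality argument—taking a counterexample with $|\tau|$ smallest and shaving one extreme corner to fall back into the already-settled boundary case—or by producing a single deeper witness cell, the natural candidate being $(x_1,y_3)$ with arm $A_1+A_2$ and leg $B_1+B_2$, and showing that its $t'_\tau$ or $t''_\tau$ must cross $\sigma_1$ or $\sigma_2$. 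In this step \autoref{lemma: even cycle} and \autoref{corollary: tripar of size n} are the tools I would lean on, converting the existence of two interior transition slopes into a forbidden configuration: a single line can touch $c_1,c_2,c_3$ only when their north-east vertices are collinear (as in the staircase $321$), and collinearity makes the middle corner fail to be a strict hull vertex, hence non-removable.

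Finally, for addable cells I would run the mirror-image argument on the outer boundary of $\mathcal{R}_\tau$. Addable cells correspond to the minimal cells lying just above the cutting line, that is, to crossing a hyperbolic wall \emph{away} from the origin, and the identical monotonicity-and-normal-cone analysis—with the roles of the lower bounds $t'_\tau$ and the upper bounds $t''_\tau$ interchanged—caps their number at $2$. Note that conjugation does not shortcut this, since it carries addable cells of $\tau$ to addable cells of the conjugate rather than to removable cells, so the symmetric boundary argument is the cleanest route.
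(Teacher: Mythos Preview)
Your framework---translating removability into ``which outer corner is the unique maximizer of $t\,i+(1-t)\,j$ for some $t$ in the cone'' and then counting transition slopes---is sound, but the proof is not finished: you explicitly leave open the ``remaining case'' $A_2,B_1\ge 2$ with $1\le \delta\le \min(A_2,B_1)-1$, and neither of your proposed closures is carried out. The witness cells $e_1=(x_1,y_2)$, $e_2=(x_2,y_3)$ and $(x_1,y_3)$ do not force the needed inequality in general (e.g.\ when $A_1=B_2=1$ and $A_2=B_2$ are large), and the appeal to \autoref{lemma: even cycle} or \autoref{corollary: tripar of size n} is not made concrete. So as written there is a genuine gap.

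The paper's argument is both shorter and avoids the case split entirely. Given three removable corners $c_1,c_2,c_3$ (with $i_1<i_2<i_3$), one first notes strict convexity of their NE vertices, then WLOG takes $i_2-i_1\le i_3-i_2$ and considers the single \emph{reflected} cell
\[
(k,l):=\bigl(2i_2-i_1,\ 2j_2-j_1\bigr),
\]
whose NE vertex is the reflection of the NE vertex of $c_1$ through that of $c_2$. Linearity of the cutting functional does all the work: if $\ell_1$ cuts off $\tau\setminus\{c_1\}$ and $\ell_2$ cuts off $\tau\setminus\{c_2\}$, then $(k,l)$ fits under $\ell_1$ (hence lies in $\tau$) but not under $\ell_2$ (hence lies outside $\tau$), an immediate contradiction. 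The WLOG assumption is used only to guarantee $l\ge 0$. This reflected cell is the missing idea in your approach; it replaces your three tentative witnesses with one that works uniformly, and it makes the convex-hull and normal-cone machinery unnecessary. The addable case is then handled by the symmetric argument, as you note.
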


\begin{proof}
Suppose that a triangular partition $\alpha$ has three removable cells $(i_1,j_1)$, $(i_2,j_2)$, and $(i_3,j_3)$, $i_1<i_2<i_3$. It follows that the cell $(i_2,j_2)$ cannot fit under the line touching the cells $(i_1,j_1)$ and $(i_3,j_3)$. Otherwise, in order to remove the cell $(i_2,j_2)$ one would either have to remove the cell $(i_1,j_1)$ or $(i_3,j_3)$. 

Without loss of generality, one may assume that $i_2-i_1\le i_3-i_2$, which implies $j_1-j_2\le j_2-j_3$. There is then a cell 
\begin{equation*}
(k,l):=(i_2+(i_2-i_1),j_2+(j_2-j_1)),
\end{equation*}
since $j_2+(j_2-j_1)\ge j_2+(j_3-j_2)=j_3\ge 0$ and $i_2+(i_2-i_1)>0.$
Consider the lines $\ell_1$ and $\ell_2$ respectively cutting off $\alpha-\{(i_1,j_1)\}$ and $\alpha-\{(i_2,j_2)\}$. The cell $(k,l)$ fits under the line $\ell_1$, but it does not fit under the line $\ell_2$; hence we have a contradiction. The statement about addable cells is proved analogously. \end{proof}
\begin{figure}  
\begin{tiny}
\begin{tikzpicture}[scale=0.6]
\tyng(0cm,0cm,12,11,9,9,6,5,2,1);
\ylw
\tyng(0cm,7cm,1);
\tyng(4cm,5cm,1);
\tyng(10cm,1cm,1);
\yred
\tyng(8cm,3cm,1);

  \draw (0.5,7.5) node {\tiny{$i_1j_1$}};
  \draw (4.5,5.5) node {\tiny{$i_2j_2$}};
  \draw (10.5,1.5) node {\tiny{$i_3j_3$}};
  \draw (8.5,3.5) node {\tiny{$kl$}};
  \draw[red,dotted,thick] (0.5,7.5)--(8.5,3.5);
  \draw[blue] (0,8.25) -- (19,0);
  \draw[blue] (0,8.7) -- (15,0);
  \draw[blue] (12.5,1) node {$l_2$};
  \draw[blue] (14.5,2.5) node {$l_1$};
\end{tikzpicture}
\end{tiny}
\caption{Illustration of \autoref{lemma: no more than two}.}
\label{figure: no more than 2}  
\end{figure}

One case of  \autoref{lemma: no more than two} is illustrated in \autoref{figure: no more than 2}.  Here the line $l_1$ cuts off $\alpha-\{(i_1,j_1)\}$ and the line $l_2$ cuts off $\alpha-\{(i_2,j_2)\}$. Then the box $(k,l)$ fits under $l_1$, but not under $l_2.$

\begin{lemma}
Suppose that a triangular partition $\tau$ corresponds to an unbounded region. Then $\tau$ is either empty, a row partition, or a column partition.
\end{lemma}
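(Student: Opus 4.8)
The plan is to prove the contrapositive: if $\tau$ is nonempty and is neither a single row nor a single column, then its region $\mathcal{R}_\tau$ is bounded in the moduli space $\mathcal{Q}$. The entire argument is carried out in the coordinates of $\mathcal{Q}$, translating each statement ``the cell $(i,j)$ lies in (resp.\ outside) $\tau$'' into the inequality $\tfrac{i+1}{r}+\tfrac{j+1}{s}\le 1$ (resp.\ $>1$). These hold throughout the interior $\mathcal{R}_\tau^\circ$, since there the cutting line $x/r+y/s=1$ cuts off exactly $\tau$ and touches no cell.

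First I would extract lower bounds on $r$ and $s$. Since $\tau$ is nonempty and is not a single column, we have $\tau_1\ge 2$, so the cell $(1,0)$ belongs to $\tau$; the inequality $\tfrac{2}{r}+\tfrac{1}{s}\le 1$ then forces $r>2$ on $\mathcal{R}_\tau^\circ$. Symmetrically, since $\tau$ is not a single row we have $\tau'_1\ge 2$, so $(0,1)\in\tau$ and $\tfrac{1}{r}+\tfrac{2}{s}\le 1$ forces $s>2$. Next I would use the first absent cells past the ends of the bottom row and the left column to obtain upper bounds. Writing $c=\tau_1$ and $d=\tau'_1$, the cells $(c,0)$ and $(0,d)$ lie outside $\tau$, so $\tfrac{c+1}{r}+\tfrac1s>1$ and $\tfrac1r+\tfrac{d+1}{s}>1$. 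Combining the first with $s>2$ gives $\tfrac{c+1}{r}>1-\tfrac1s>\tfrac12$, hence $r<2(c+1)$; combining the second with $r>2$ gives $s<2(d+1)$. Thus $\mathcal{R}_\tau^\circ\subseteq (2,\,2(\tau_1+1))\times(2,\,2(\tau'_1+1))$, a bounded rectangle, and hence so is its closure $\mathcal{R}_\tau$. This establishes the contrapositive.

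The only real subtlety, and the step I would treat most carefully, is the interplay between the two kinds of constraints: a lower bound such as $s>2$ is useless on its own, but it is precisely what converts the ``absent cell'' inequality $\tfrac{c+1}{r}+\tfrac1s>1$ into a genuine upper bound on $r$. This also serves as a consistency check explaining why rows and columns escape to infinity: for a row $\tau=(m)$ one has $\tau'_1=1$, the cell $(0,1)$ is absent, the constraint $s>2$ disappears, and indeed $\mathcal{R}_{(m)}$ stretches out along $r\to\infty$ with $s\to 1^+$ (dually for columns, while $\mathcal{R}_\varepsilon$ is unbounded in both directions). I expect no serious obstacle beyond fixing the cell and coordinate conventions and verifying that all the stated inequalities genuinely hold on $\mathcal{R}_\tau^\circ$ rather than merely on its closure.
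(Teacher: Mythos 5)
Your proof is correct, but it takes a genuinely more elementary route than the paper's. Both arguments prove the contrapositive, and both hinge on the same starting observation: a triangular partition that is neither empty, a row, nor a column must contain the two cells $(1,0)$ and $(0,1)$. From there the paper leans on its structural machinery: since the arm and leg of $(0,0)$ are then positive, \autoref{lemma_slope} confines the slope of any cutting line, giving $\lambda < r/s < \mu$ for some $0<\lambda<\mu$, and the hyperbolic wall $(r-a)(s-b)=ab$ of an addable cell $(a-1,b-1)$ caps the region from above, so $\mathcal{R}_\tau$ sits inside a bounded curved triangle. You instead work directly with the defining inequalities of $\tau_{rs}$: the cells $(1,0)$ and $(0,1)$ force $r>2$ and $s>2$, and the absent cells $(\tau_1,0)$ and $(0,\tau'_1)$, combined with those lower bounds, force $r<2(\tau_1+1)$ and $s<2(\tau'_1+1)$, trapping the region in an explicit rectangle. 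Your version is self-contained (no appeal to the slope lemma, addable cells, or walls) and produces explicit bounds; the paper's is shorter given the machinery already in place and stays within the moduli-space viewpoint of the section. One small cleanup: your inequalities actually hold on the whole set of $(r,s)$ whose line cuts off $\tau$, not just on the interior $\mathcal{R}_\tau^\circ$, and it is cleanest to bound that set directly, since $\mathcal{R}_\tau$ is by definition its closure; this sidesteps the (true but unjustified in your write-up) identification of $\mathcal{R}_\tau$ with the closure of $\mathcal{R}_\tau^\circ$.
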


\begin{proof}
Indeed, if both cells $(1,0)$ and $(0,1)$ are in $\tau$ then both the leg and the arm of $(0,0)$ in $\tau$ are positive. \autoref{lemma_slope} then implies that there exist $\mu>\lambda>0$ such that if the line $x/r+y/s=1$ cuts off $\tau$ then $\mu>r/s>\lambda$. Also, if $(a-1,b-1)$ is an addable cell, then the hyperbolic wall $(r-a)(s-b)=ab$ is an upper boundary of the corresponding region. But then, the region has to lay inside the curved triangle bounded by the lines $r=\mu s$ and $r=\lambda s$, and the hyperbola $(r-a)(s-b)=ab.$
\end{proof}

\begin{lemma}\label{lemma: no alternating}
Any bounded region is either a triangle or a quadrilateral. Each quadrilateral region has two lower boundaries and two upper boundaries, and they cannot alternate, \ie  the two lower boundaries intersect in a vertex of the region, and the two upper boundaries intersect in a vertex of the region. 
\end{lemma}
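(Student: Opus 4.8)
The plan is to linearize the hyperbolic walls by passing to the coordinates $u=1/r$ and $v=1/s$ on the open quadrant $\mathcal{Q}$. Since a cell $(a-1,b-1)$ fits below the line $(r,s)$ precisely when $a/r+b/s\le 1$, in these coordinates the wall $\mathcal{H}_{ab}$ becomes the \emph{straight line} $\ell_{ab}:au+bv=1$, with $a,b\ge 1$. The map $(r,s)\mapsto(1/r,1/s)$ is a self-homeomorphism of the open quadrant; every wall lies in $\{r>1,\,s>1\}$, so a bounded region has compact closure inside the open quadrant and its image is again bounded. Being cut out by the half-plane conditions $au+bv\le 1$ for the cells of $\tau$ and $au+bv\ge 1$ for the cells outside $\tau$, the image of $\mathcal{R}_\tau$ is an intersection of half-planes, hence a \emph{convex} polygon. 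This convexity, invisible in the hyperbolic picture, is the whole point of the substitution.

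Next I would identify the two boundary types intrinsically, through their outward normals. Each edge lies on a single line $\ell_{ab}$, and crossing it (generically) adds or removes the cell $(a-1,b-1)$. If crossing outward removes the cell, then the region lies on the side $au+bv\le 1$, the edge is a lower boundary coming from a removable cell, and its outward normal points along $(a,b)$, in the open first quadrant of directions. If crossing outward adds the cell, the region lies on the side $au+bv\ge 1$, the edge is an upper boundary coming from an addable cell, and its outward normal is $-(a,b)$, in the open third quadrant. Since every wall line has positive coefficients $a,b$, these are the only possibilities: each edge is either a lower boundary (normal in the first quadrant) or an upper boundary (normal in the third quadrant).

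With this dictionary the counting is immediate. By Lemma~\ref{lemma: no more than two} there are at most two removable and at most two addable cells, hence at most two lower and at most two upper edges, so at most four edges in all. A bounded convex polygon has at least three edges, and it cannot have all its outward normals inside a single open quadrant (normals confined to an open half-plane force the region to be unbounded), so it uses at least one normal from each of the first and third quadrants, i.e.\ at least one lower and one upper boundary. Therefore the region is a triangle (edge split $1{+}2$ or $2{+}1$) or a quadrilateral; and a four-edged region, being neither $3{+}1$, $1{+}3$, $4{+}0$, nor $0{+}4$, must have exactly two lower and two upper edges.

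Finally, for the non-alternating claim I would invoke the standard fact that, as one traverses the boundary of a convex polygon counterclockwise, the outward normals rotate monotonically through one full turn. The open first quadrant and the open third quadrant of normal directions are disjoint and are separated by gaps, so the edges with normals in the first quadrant form one contiguous arc of the boundary and those with normals in the third quadrant form another; the two families cannot interleave. In particular, when the region is a quadrilateral its two lower edges are consecutive and meet in a vertex, and likewise its two upper edges meet in a vertex, which is exactly the asserted non-alternation. The only genuinely delicate step is the first paragraph: verifying that in the $(u,v)$ coordinates $\mathcal{R}_\tau$ really is the convex intersection of half-planes, that boundedness is preserved, and that the removable/addable labelling matches the first-quadrant/third-quadrant normals. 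Once that is established, everything else reduces to elementary facts about convex polygons.
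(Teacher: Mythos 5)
Your proof is correct, but it follows a genuinely different route from the paper's. The paper never leaves the $(r,s)$ coordinates: it rules out two-sided bounded regions by noting that two hyperbolic walls $\mathcal{H}_{ab}$ can meet in at most one point, invokes Lemma~\ref{lemma: no more than two} to cap the number of sides at four, and then proves non-alternation dynamically --- following the boundary counterclockwise, the cutting line rotates about the lattice point $(a,b)$ of the wall being traversed, and at every vertex where the boundary type changes the new rotation center $(c,d)$ must lie strictly to the east of the old one; a fully alternating quadrilateral would force this eastward jump at all four vertices of a closed circuit, a contradiction. Your key move, the inversion $(u,v)=(1/r,1/s)$ turning every wall into the straight line $au+bv=1$ and every region into an intersection of half-planes, appears nowhere in the paper, and it is what lets you replace the paper's ad hoc rotation bookkeeping with standard convex geometry: a compact convex polygon has at least three edges, its outward normals cannot all lie in an open half-plane (which also gives you, as a bonus the paper leaves implicit, that a bounded region must have at least one lower and one upper boundary), and monotone rotation of normals forces the first-quadrant normals (lower edges, removable cells) and third-quadrant normals (upper edges, addable cells) to form contiguous arcs. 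The trade-off is that your argument leans on the identification of the closure of a component with the intersection of the corresponding closed half-planes (which you correctly flag as the delicate step; it follows from convexity of the component together with local finiteness of the line arrangement away from the axes), whereas the paper's hyperbola-rotation picture, though more hands-on, is reused verbatim in the surrounding results (e.g.\ Lemma~\ref{lemma: parallel} and the discussion of Fig.~\ref{figure: no alternating}), so it earns its keep there.
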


\begin{proof}
A bounded region cannot have just two boundaries, since two hyperbolic walls cannot intersect at more than one point (that would correspond to two distinct lines passing through the same two points). In view of \autoref{lemma: no more than two}, the only thing left to check is that a quadrilateral region cannot have sides alternating between lower and upper boundaries. 

Let us follow the boundary of a region in the counterclockwise direction, and suppose that at a certain vertex we have a change in the type of the boundary (from lower to upper, or vice-versa). Let $(r-a)(s-b)=ab$ be the hyperbola that we followed approaching the vertex, and let $(r-c)(s-d)=cd$ be the hyperbola we followed after the vertex. Perforce, we have $c>a$. Indeed, when moving counterclockwise along a lower boundary, $r$ is decreasing and $s$ is increasing, hence the corresponding line is rotating counterclockwise around $(a,b)$. For the next boundary to be upper, one has to hit the north-east corner $(c,d)$ of an addable cell $(c-1,d-1)$, which has to lie to the east of $(a,b)$ since the line is rotating counterclockwise. The case when the boundary changes from upper to lower is similarly dealt with. One concludes that the upper and lower boundaries cannot always alternate as one moves around the region.
\end{proof}

\begin{lemma}\label{lemma: parallel}
Let $\alpha$ be a triangular partition corresponding to a quadrilateral region, which is to say that  it has two removable and two addable cells. Then the line touching the two removable cells is parallel to the line touching the two addable cells\footnote{In \autoref{Fig_Triangular_Young_dual} this is reflected by the fact that lines connecting the top and the bottom corners of quadrilateral regions have slopes $1$ (in  logarithmic scale).}.
\end{lemma}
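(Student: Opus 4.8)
The plan is to argue entirely inside the moduli space $\mathcal{Q}$, after the substitution $(u,v)=(1/r,1/s)$, which turns every hyperbolic wall $\mathcal{H}_{ab}$ into the honest line $au+bv=1$ and turns ``the cutting lines are parallel'' into ``the corresponding points lie on one ray through the origin'' (a line $x/r+y/s=1$ is determined up to translation by the ratio $s/r=u/v$, and rays through the origin are preserved by $(r,s)\mapsto(1/r,1/s)$). Write $L_-$ and $L_+$ for the two lines of the statement, let $(i_1,j_1)=(a_1-1,b_1-1),(i_2,j_2)=(a_2-1,b_2-1)$ be the removable cells and $(p_1-1,q_1-1),(p_2-1,q_2-1)$ the addable ones. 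By \autoref{lemma: even cycle} and \autoref{lemma: no alternating}, the two lower (removable) walls $\mathcal{H}_{a_1b_1},\mathcal{H}_{a_2b_2}$ of the quadrilateral $\mathcal{R}_\alpha$ meet in a single vertex $P_-$ and the two upper (addable) walls meet in a single vertex $P_+$; at $P_-$ the cutting line passes through $(a_1,b_1)$ and $(a_2,b_2)$, hence equals $L_-$, and at $P_+$ it equals $L_+$. Thus the lemma is equivalent to the assertion that $P_-$, the origin, and $P_+$ are collinear.

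First I would record the arithmetic of $L_-$. Writing $f(x,y)=x/r_-+y/s_-$, so that $L_-=\{f=1\}$, the values of $f$ on $\mathbb{Z}^2$ form a subgroup $\delta\,\mathbb{Z}$ of $\mathbb{R}$ (its kernel being the primitive lattice direction of $L_-$), and $f(a_1,b_1)=f(a_2,b_2)=1$ forces $1\in\delta\mathbb{Z}$. The heart of the proof is the claim that the north-east corners $(p_1,q_1),(p_2,q_2)$ of the two addable cells both satisfy $f=1+\delta$, i.e. they lie on the first lattice line $\Pi$ parallel to $L_-$ and immediately above it. Granting this, $\Pi$ passes through both corners; since $L_+$ is by definition the line through these two distinct points, $L_+=\Pi$ is parallel to $L_-$, as desired. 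Equivalently, in $\mathcal{Q}$ the ray from the origin through $P_-$ exits $\mathcal{R}_\alpha$ exactly at $P_+$.

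To prove the claim I would slide $L_-$ outward parallel to itself (i.e. travel out along the ray through $P_-$). An addable cell $(p-1,q-1)$ has $f(p,q)=1+m\delta$ with $m\ge 1$, and $m=1$ is precisely membership in $\Pi$; suppose $m\ge 2$. Using the lemma preceding \autoref{lemma: even cycle} (a removable cell is cut off by a line touching only it), choose a line $\Lambda$, with affine form $g$, cutting off $\alpha\cup\{(p-1,q-1)\}$ and touching only $(p-1,q-1)$, so $g(p,q)=1$ while $g(a_1,b_1),g(a_2,b_2)<1$. Then I would pick a lattice vector $z$ with $f(z)=\delta$ and, translating it by the primitive direction of $L_-$, arrange $g(z)>0$ with $(p,q)-z$ in the positive quadrant. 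The cell with corner $(p,q)-z$ lies strictly above $L_-$ (since $f=1+(m-1)\delta>1$), hence is not in $\alpha$ and differs from $(p-1,q-1)$, yet it lies below $\Lambda$ (since $g=1-g(z)<1$), hence must belong to $\alpha\cup\{(p-1,q-1)\}$ --- a contradiction. If instead the primitive direction of $L_-$ is also a direction of $\Lambda$ (so $\Lambda\parallel L_-$), the claim is immediate: then $\alpha\cup\{(p-1,q-1)\}$ shares the slope vector of $L_-$, so by \autoref{lemma_slope} and \autoref{corollary: tripar of size n} it lies in the nested family of triangular partitions with that slope vector and strictly above $\alpha$; having only one more cell than $\alpha$, that cell is a first-level cell and $m=1$.

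The step I expect to be the genuine obstacle is exactly this last claim --- that \emph{both} addable corners sit on the very first parallel lattice line, and not merely somewhere above $L_-$. The examples show that the addable corners need not be a common translate of the removable corners, and that $L_-$ may touch additional, non-removable cells, so no naive ``add one layer'' bijection is available; the argument must really combine triangularity of $\alpha\cup\{(p-1,q-1)\}$ (through the touching line $\Lambda$) with the lattice structure of $f$. I would also need to dispatch the two bookkeeping points flagged above: that $z$ can be chosen with $g(z)>0$ and $(p,q)-z$ in the positive quadrant (translating $z$ along the primitive direction of $L_-$, whose $g$-value is nonzero precisely in the non-degenerate case), and that $\Pi$ meets the positive quadrant in genuine cell corners near $(a_1,b_1),(a_2,b_2)$, which holds because these two corners lie on $L_-$ in the interior of $\mathcal{Q}$.
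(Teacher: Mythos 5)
Your reduction is attractive and the logical skeleton is sound: if both addable corners lie on the first lattice line $\Pi=\{f=1+\delta\}$ above $L_-$, then $L_+=\Pi$ is parallel to $L_-$; and the contradiction mechanism (a cell corner at an intermediate $f$-level, inside the quadrant, strictly below $\Lambda$, would have to be a cell of $\alpha\cup\{(p-1,q-1)\}$ yet cannot be) is correct, using \autoref{lemma: even cycle} and \autoref{lemma: no alternating} exactly as the paper does to place $L_-$ and $L_+$ in $\overline{\mathcal{R}_\alpha}$. The gap is the step you dismiss as bookkeeping: producing $z$ with $f(z)=\delta$, $g(z)>0$ \emph{and} $(p,q)-z$ in the positive quadrant. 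These two constraints fight each other along the one-parameter family you propose. Writing $1/r_-=P\delta$, $1/s_-=Q\delta$ with $\gcd(P,Q)=1$ and $w=(Q,-P)$, the quadrant condition on $z=z_0+kw$ (namely $z_1\le p-1$ and $z_2\le q-1$ with $Pz_1+Qz_2=1$) confines $k$ to a \emph{bounded} interval, whereas $g(z+kw)>0$ is a half-line of $k$'s; nothing in your argument shows the two sets meet. Concretely, when $\Lambda$ is much steeper than $L_-$ one has $g(w)>0$, so pushing $k$ up to make $g(z)$ positive drives the first coordinate of $(p,q)-z$ to $-\infty$; if moreover $(p,q)$ sits near the horizontal axis, the admissible window can close before $g(z)$ turns positive. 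Equivalently: you need a lattice point of the level line $\{f=1+(m-1)\delta\}$ that is both a genuine cell corner and strictly below $\Lambda$, and the portion of that line meeting both constraints can be shorter than $|w|$. This is not a peripheral verification --- it is precisely the difficulty the paper's proof is built around: there, one translates either the extreme \emph{addable} corner by the primitive vector of the removable-cells line, or the extreme \emph{removable} corner by the primitive vector of the addable-cells line, the choice being dictated by the case $l_1\ge j_1$ versus $l_1<j_1$, exactly so that quadrant membership of the constructed corner is guaranteed by a one-line inequality.

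The degenerate case has a second (fixable, but unaddressed) gap of the same nature. From the nested family of partitions sharing the slope of $L_-$ you only get that $(p,q)$ lies on the \emph{first level above $L_-$ that contains a cell corner in the quadrant}; to conclude $m=1$ you must still show that the level $\{f=1+\delta\}$ itself contains a lattice point with both coordinates at least $1$. This can be salvaged: since $L_-$ carries two quadrant lattice points, $1/\delta=Pa_1+Qb_1\ge PQ+P+Q$, and then every level above $1/\delta$ exceeds the Frobenius number $PQ-P-Q$ after subtracting $P+Q$, hence is representable with both coordinates positive. But an input of this representability type has to be supplied there, and some analogue of it (or of the paper's case analysis) is what is missing at the crux of your non-degenerate case as well. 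As it stands, the proposal establishes the lemma only modulo the very point where its difficulty lives.
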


\begin{wrapfigure}{r}{0.35\textwidth}
\vskip-10pt
\includegraphics[width=0.33\textwidth]{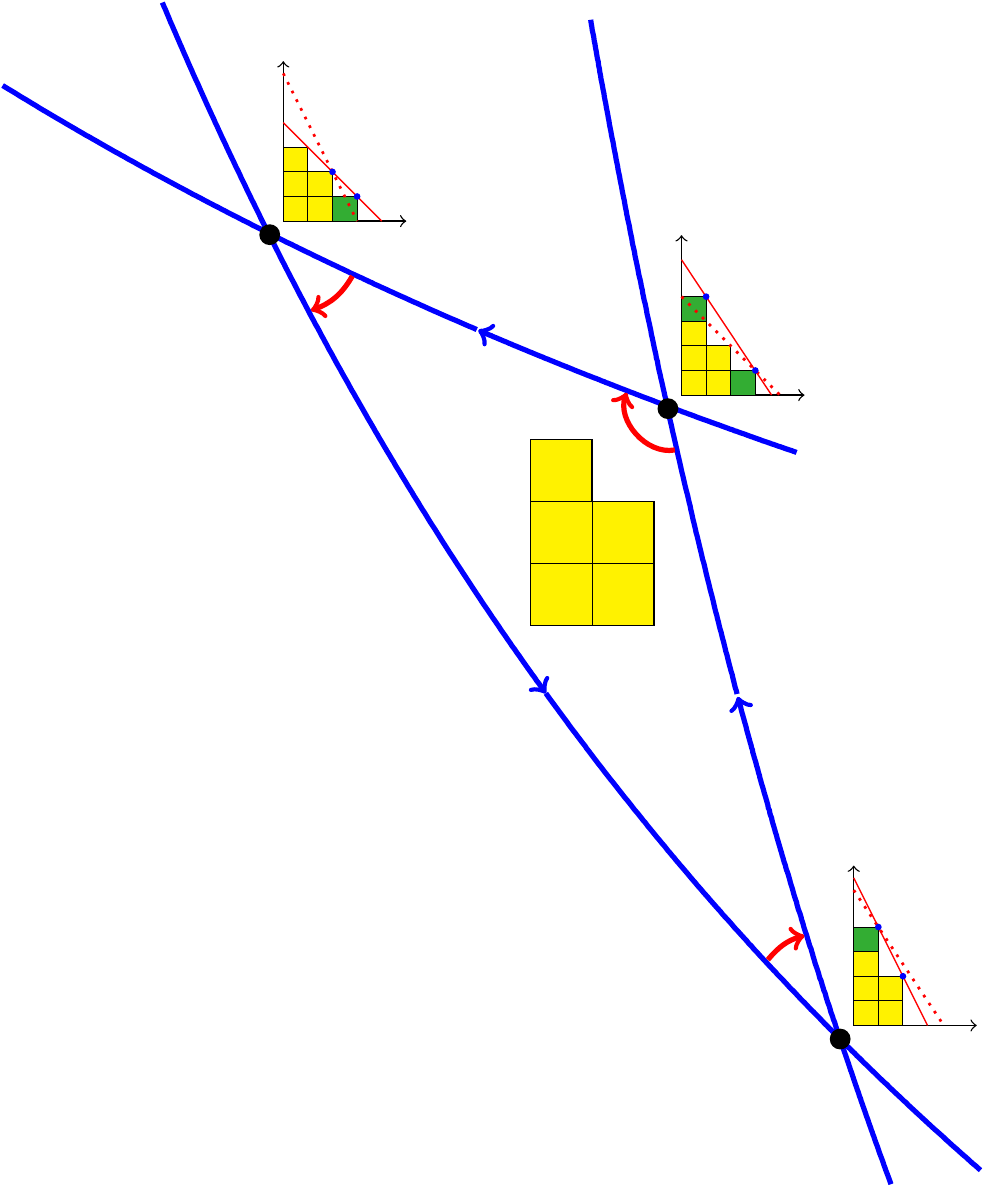}
\vskip-5pt
\wrapcaption{Illustration for \autoref{lemma: no alternating}}
\label{figure: no alternating}
\vskip-30pt
\end{wrapfigure}
As illustrated in \autoref{figure: no alternating}, if one moves counterclockwise along the boundary of the ($3$ sided) region corresponding to $\tau=221$, then the corresponding line successively rotates 
\begin{itemize}
\item[$\bullet$] counterclockwise around the north-east corner of the removable cell $(1,1)$ (left), 
\item[$\bullet$] clockwise around the north-east corner of the addable cell $(2,0)$ (center), 
\item[$\bullet$] and clockwise around the addable cell $(0,3)$ (right). 
\end{itemize}
In a transition from a lower boundary to an upper boundary (left) or vice versa (right), the center of rotation moves east. If the next piece of the boundary is of the same type (center), then  the center of rotation moves west.

\begin{proof}
Let $\ell_1$ be the line touching the two removable cells of $\alpha$, and let $\ell_2$ be the line touching the two addable cells of $\alpha$. \autoref{lemma: no alternating} implies that $\ell_1,\ell_2\in \overline{R_\alpha}$.  Let 
\begin{equation*}
(i_1,j_1),(i_2,j_2),\ldots,(i_k,j_k),\qquad {\rm with}\qquad i_1<i_2<\ldots<i_k,
\end{equation*}
be all the cells touched by $\ell_1$, and let 
\begin{equation*}
(k_1,l_1),(k_2,l_2),\ldots,(k_n,l_n),\qquad {\rm with}\qquad k_1<k_2<\ldots<k_n,
\end{equation*}
be all the cells touched by $\ell_2$. Since $\ell_1,\ell_2\in \overline{R_\alpha}$, according to \autoref{lemma: even cycle} the cells $(i_1,j_1),(i_2,j_2),\ldots,(i_k,j_k)$ are in $\alpha$, and $(i_1,j_1)$ and $(i_k,j_k)$ are the two removable cells of $\alpha$. Similarly, cells $(k_1,l_1),(k_2,l_2),\ldots,(k_n,l_n)$ are not in $\alpha$, and $(k_1,l_1)$ and $(k_n,l_n)$ are the two addable cells of $\alpha.$

Suppose that $\ell_1$ and $\ell_2$ are not parallel.Let $(u,v)$ be their intersection (here $(u,v)$ is not necessarily in the positive quadrant). Note that all the removable cells should fit below $\ell_2$ and none of the addable cells can fit below $\ell_1$.It follows that $(u,v)$ is either to the east of the north-east corners of all addable and removable cells, or it is to the west of all of them. Without loss of generality one can assume that $(u,v)$ is to the east,\ie  $u<a_1+1$ and $u<c_1+1$.It follows then that the line $\ell_1$ is steeper than $\ell_2.$

If there is a cell such that its north-east corner is strictly inside the triangle bounded by the horizontal axis and the lines $\ell_1$ and $\ell_2$, then one gets a contradiction: this cell is strictly below $\ell_2$, but cannot be inside $\alpha$ since its north-east corner is above $\ell_1$.

One needs to consider two cases. Suppose first that $l_1\ge j_1$. Then there is a cell $(k_1+(i_2-i_1),l_1+(j_2-j_1))$ and its north-east corner is inside the required triangle. Indeed, one has:

\begin{equation*}
l_1+(j_2-j_1)\ge j_1+j_2-j_1=j_2\ge 0,
\end{equation*}
and $k_1+(i_2-i_1)>0$, so it is a cell, and its north-east corner is below $\ell_2$ and above $\ell_1$, because we moved from the cell $(k_1,l_1)$  with the north-east corner on $\ell_2$ and above $\ell_1$ in the direction parallel to $\ell_1$, which goes down steeper than $\ell_2.$

Suppose now that $l_1<j_1$.Then there is a cell $(i_1+(k_2-k_1),j_1+(l_2-l_1))$, and it is inside the required triangle. Indeed, one has:

\begin{equation*}
j_1+(l_2-l_1)> l_1+l_2-l_1=l_2\ge 0,
\end{equation*}
and $i_1+(k_2-k_1)>0$,  so it is a cell, and its north-east corner is below $\ell_2$ and above $\ell_1$ because we moved from the cell $(i_1,j_1)$ with the north-east corner on $\ell_1$ and below $\ell_2$ in the direction parallel to $\ell_2$, which goes down but less steep than $\ell_1.$
\end{proof}

\begin{figure}
\includegraphics[scale=.8]{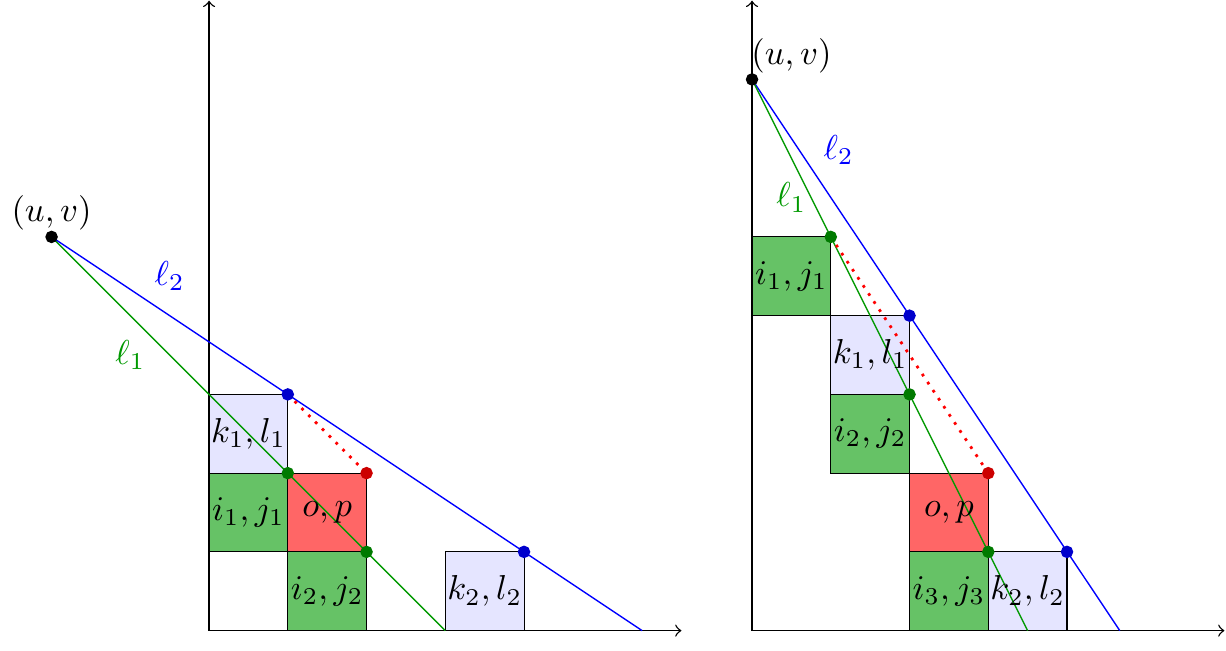}
\caption{Illustration for \autoref{lemma: parallel}}
\label{figure: parallel}    
\end{figure}
Illustrated in \autoref{figure: parallel}, are the cases when $l_1>j_1$ (left) and $l_1<j_1$ (right). Correspondingly, on the left $(o,p)=(k_1+(i_2-i_1),l_1+(j_2-j_1))$ and on the right $(o,p)=(i_1+(k_2-k_1),j_1+(l_2-l_1))$. In both instances the cell $(o,p)$ creates a contradiction as it fits under $\ell_2$ but not under $\ell_1.$

\begin{lemma}\label{lemma: triangle with two lower}
Suppose that a triangular partition $\alpha$ has two removable cells and just one addable cell (in other words it corresponds to a triangular region with two lower boundaries). Then the line touching the two removable cells does not contain any other positive integer points. Equivalently, no other hyperbolic wall passes through the vertex of the region where the two lower boundaries intersect.
\end{lemma}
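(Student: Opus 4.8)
The plan is to argue in the moduli space $\mathcal{Q}$ and to prove the statement in contrapositive form. First I would fix the geometry. Let $\ell_1\colon x/r+y/s=1$ be the line through the two removable cells of $\alpha$, and let $V=(r,s)$ be the vertex at which the two lower boundaries of $\mathcal{R}_\alpha$ meet; by definition these boundaries are arcs of the hyperbolic walls attached to the two removable cells, so both walls pass through $V$ and hence $\ell_1$ touches both removable cells. Let $(i_1,j_1),\dots,(i_k,j_k)$, ordered by $i_1<\dots<i_k$, be all the cells touched by $\ell_1$. Since $\alpha$ has two removable cells lying on $\ell_1$, the case analysis recorded after \autoref{lemma: even cycle} forces $\alpha$ to be its case $(1)$, that is $\alpha=\tau_{rs}$, and identifies the two removable cells as the extreme touched cells $(i_1,j_1)$ and $(i_k,j_k)$. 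Under this identification, an extra positive integer point on $\ell_1$, an extra wall through $V$, and the inequality $k\ge 3$ are all equivalent, so it suffices to prove $k=2$.

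I would then prove $k=2$ by contradiction: assuming $k\ge 3$, I would exhibit two \emph{distinct} addable cells of $\alpha$, contradicting the hypothesis that $\alpha$ has exactly one. Note that the corners $P_m=(i_m+1,j_m+1)$ all lie on $\ell_1$ with consecutive differences equal to the primitive direction vector $\vec{p}=(u,-w)$, $\gcd(u,w)=1$, so $k\ge 3$ means there is at least one interior touched cell strictly between the two removable ones.

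To produce the two addable cells I would work one end at a time. Because $(i_1,j_1)$ is removable, I can rotate $\ell_1$ about the corner $P_1$ so as to lift it strictly above $(i_2,j_2),\dots,(i_k,j_k)$ while still cutting off $\tau_{rs}$ and touching only $(i_1,j_1)$ (this is exactly the configuration produced by the earlier lemma on removable cells). Pushing this line outward by the least amount that makes it meet a new integer point yields a cell $e'\notin\alpha$ whose addition is triangular, i.e. an addable cell attached to the $(i_1,j_1)$-end. The symmetric construction about $P_k$ gives an addable cell $e''$ attached to the $(i_k,j_k)$-end. Since $k\ge 3$, these two constructions touch integer points lying beyond opposite extremities of the chain $P_1\cdots P_k$, so $e'\neq e''$; thus $\alpha$ has at least two addable cells, and by \autoref{lemma: no more than two} exactly two, whence its region is a quadrilateral rather than the triangle of the hypothesis. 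This contradiction gives $k=2$.

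The main obstacle is the rigorous justification of the two constructions: one must write down explicit lines certifying that each of $e'$ and $e''$ is genuinely addable (that the rotated-and-translated line cuts off $\alpha\cup\{e'\}$, respectively $\alpha\cup\{e''\}$, while touching no other cell) and check that $e'\neq e''$. Since the precise location of each such cell (directly above, versus directly to the right of, the corresponding removable cell) depends on whether $\vec{p}$ is steep or shallow, this step requires a short case distinction on the sign of $u-w$, entirely parallel to the cell-chasing in the proof of \autoref{lemma: parallel}. Everything else — the reduction through \autoref{lemma: even cycle} and the passage from ``two addable cells'' to ``quadrilateral region'' via \autoref{lemma: no alternating} and \autoref{lemma: no more than two} — is routine.
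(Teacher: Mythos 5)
Your reduction to showing $k=2$ is sound: at the vertex $V$ the cutting line $\ell_1$ touches both removable cells, these are the extreme touched cells by \autoref{lemma: even cycle}, and extra positive integer points on $\ell_1$ are the same thing as $k\ge 3$. Each of your two end-constructions also genuinely produces an addable cell. The gap is the crux, the claim that $e'\neq e''$. Nothing in the construction localizes the first integer point met by the outward translation of $\ell'$: in the moduli space $\mathcal{Q}$, translating a line outward is just moving radially, and the first wall crossed is the wall of whichever addable cell happens to be hit first --- its corner can sit anywhere along the line, in particular strictly \emph{between} the extremities of the chain $P_1\cdots P_k$, not ``beyond'' the pivot end. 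Worse, under the very hypothesis you are contradicting, $\mathcal{R}_\alpha$ has a \emph{single} upper boundary arc, so every outward path starting on either lower arc must exit $\mathcal{R}_\alpha$ through that same wall: both constructions are then forced to return the unique addable cell, i.e.\ $e'=e''$, and no contradiction appears. A concrete illustration of the geometry (necessarily with $k=2$, since $k\ge3$ cannot occur): for $\alpha=3211$ the removable corners are $(3,1)$ and $(1,4)$, and the unique addable cell is $(1,2)$, whose corner $(2,3)$ lies horizontally strictly between them; translating outward from either end hits this same middle point first. So the distinctness of $e'$ and $e''$ is essentially equivalent to the statement being proved, and the case distinction on the steepness of $\vec{p}$ that you propose cannot repair it.

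What is missing is a counting ingredient that forces two addable cells to appear \emph{on one and the same line}, because that is what makes distinctness automatic. This is exactly how the paper argues: take $\ell_2$ to be the lattice line parallel to $\ell_1$ and adjacent to it from above (no integer points strictly in between), so that $\ell_2\in\overline{\mathcal{R}_\alpha}$. An elementary lattice-geometry fact says that the numbers of positive integer points on two such adjacent parallel lines differ by at most one. If $\ell_2$ carried two or more positive integer points, then by \autoref{lemma: even cycle} the west-most and east-most cells it touches would be two \emph{distinct} addable cells of $\alpha$, contradicting the hypothesis; hence $\ell_2$ carries exactly one, and the counting fact then forces $\ell_1$ to carry at most two, i.e.\ exactly the two removable corners. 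Your proof would need to be rebuilt around some such comparison of $\ell_1$ with a neighboring parallel lattice line (or an equivalent counting argument); as written, the two-ends construction cannot yield the contradiction.
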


\begin{proof}
Suppose that two parallel lines $\ell_1$ and $\ell_2$ are such that 
\begin{enumerate}
\item they both contain integer points,
\item there are no integer points (even not necessarily positive) between them,
\item $\ell_1$ contains finitely many positive integer points.
\end{enumerate}
Then $\ell_2$ also contains finitely many positive integer points, and the numbers of positive integer points on $\ell_1$ and on $\ell_2$ differ not more than by one.

Now, let $\ell_1$ be the line touching the two removable cells of $\alpha$, and let $\ell_2$ be the line parallel to $\ell_1$, above it, and satisfying the above conditions. Clearly, $\ell_2\in\overline{R_\alpha}$.The statement above then implies that $\ell_2$ contains at least one positive integer point. But if it contains more than one positive integer point, then by \autoref{lemma: even cycle}, the east-most and the west-most cells touched by $\ell_2$ are both addable. Contradiction. Therefore, $\ell_2$ contains exactly one positive integer point. But then $\ell_1$ cannot contain more than two. 
\end{proof}

\begin{lemma}
Suppose that a triangular partition $\alpha$ has just one removable cell and two addable cells (in other words it corresponds to a triangular region with two upper boundaries). Then the line touching the two addable cells does not contain any other positive integer points. Equivalently, no other hyperbolic wall passes through the vertex of the region where the two upper boundaries intersect).
\end{lemma}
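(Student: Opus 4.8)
The plan is to run the argument of \autoref{lemma: triangle with two lower} with the roles of ``removable'' and ``addable'', and of ``above'' and ``below'', interchanged; the geometry of the moduli space $\mathcal{Q}$ is symmetric under this exchange, so the same reasoning transfers verbatim. First I would reuse the auxiliary claim established there: if two parallel lines $\ell_1$ and $\ell_2$ both contain integer points, have no integer point strictly between them, and $\ell_1$ contains only finitely many positive integer points, then $\ell_2$ also contains finitely many, and the two counts differ by at most one. This is a purely lattice-theoretic statement about an affine lattice line and its nearest parallel neighbor, and it is insensitive to which of the two lines we regard as the ``upper'' one.

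Next, let $\ell_1$ be the line touching the two addable cells of $\alpha$. Since $\ell_1$ is precisely the line associated to the vertex where the two upper boundaries of the region meet, we have $\ell_1\in\overline{R_\alpha}$. I would then take $\ell_2$ to be the line parallel to $\ell_1$ lying immediately \emph{below} it (towards the origin), with no integer point strictly between the two and still meeting the lattice; this $\ell_2$ also lies in $\overline{R_\alpha}$. Because $\ell_2$ is closer to the origin than $\ell_1$, the cells it touches belong to $\alpha$, and \autoref{lemma: even cycle} tells us that the east-most and west-most such cells are \emph{removable} cells of $\alpha$.

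Since $\alpha$ has by hypothesis only one removable cell, $\ell_2$ cannot touch two distinct positive integer points: otherwise its extreme touched cells would furnish two distinct removable cells of $\alpha$. Hence $\ell_2$ meets at most one positive integer point. Applying the auxiliary claim, $\ell_1$ meets at most two positive integer points; as it already passes through the north-east corners of the two addable cells, these account for all of them. This shows $\ell_1$ carries no further positive integer point, which is equivalent to saying that no additional hyperbolic wall passes through the vertex where the two upper boundaries of the region intersect.

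The step I expect to require the most care is confirming that $\ell_2$ genuinely lies in $\overline{R_\alpha}$ and that ``below'' is the direction producing removable (rather than addable) touched cells; this is exactly where one must track the orientation conventions of $\mathcal{Q}$ and invoke \autoref{lemma: even cycle} with the correct inclusion, mirroring the ``lower'' version in \autoref{lemma: triangle with two lower}. Once the orientation is pinned down, the remainder is a direct dualization and involves no new computation.
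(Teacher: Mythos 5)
Your proposal is correct and takes essentially the same approach as the paper: the paper's own proof is literally ``Similar to'' the preceding lemma (two removable cells, one addable cell), and your argument is exactly that dualization carried out in detail --- swapping removable/addable and above/below, reusing the parallel-lines counting claim, identifying $\alpha$ with $\tau_{\ell_2}$ so that \autoref{lemma: even cycle} makes the extreme cells touched by $\ell_2$ removable, and then bounding the count on $\ell_1$ by at most two. The step you flag for care (that $\ell_2\in\overline{R_\alpha}$ and that its touched cells lie in $\alpha$) is resolved exactly as you indicate, so there is no gap.
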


\begin{proof}
Similar to \autoref{lemma: triangle with two lower}.
\end{proof}

\begin{figure}[ht]
\includegraphics[scale=.8]{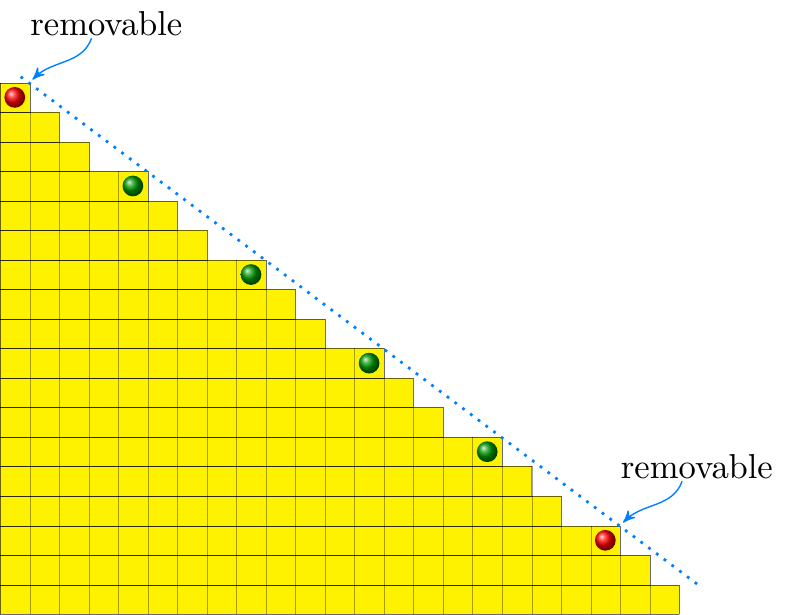}
\caption{The diagonal of $\tau$ is the segment bounded by its removable cells.}
\label{FigTriangle}
\end{figure}
The \define{diagonal} of triangular partition $\tau$, is the set of cells that lie on the segment joining its removable cells. If there is just one such cell, the diagonal is reduced to a single cell.  The diagonal acts as a natural ``boundary'' of $\tau$, and we denote it by $\partial_\tau$. As illustrated in \autoref{FigTriangle}, the cells of $\partial_\tau$ are corners (either in red or green) of $\tau$.  It follows from Lemma \ref{lemma: even cycle} that the partition $\tau^\circ:=\tau\setminus\partial_\tau$, which we call the \define{interior} of $\tau$, is a triangular partition. 
Thus, when $\delta_\tau$ contains $k$ cells, the Hasse diagram of the interval $[\tau^\circ,\tau]$ is a $2k$-sided polygon. In particular, the interval $[\tau_{k-1,k-1},\tau_{kk}]$ is always $2k$-gon. This is why the whole poset is a mosaic of $2k$-gons, as illustrated in \autoref{Fig_Triangular_Young}. 

For a given slope vector $v=(t,1-t)$, the \define{ray} $R_v$ is the set of  triangular partitions having $v$ as an \define{admissible} slope vector, \ie  
    \begin{equation}
       \bleu{R_v} :=\bleu{ \{\tau\in\YoungTriangle \, |\,  t_\tau^- <t < t_\tau^+\}}.
   \end{equation}
This is an infinite chain in $\YoungTriangle$ (see \autoref{Fig_Triangular_Young}). For instance, we have the ray
\begin{align*}
 R_{(1-\epsilon,1+\epsilon)}&= \{\varepsilon,1, 2, 21, 31, 32, 321, 421, 431, 432, 4321,\ldots\}. 
 \end{align*}
See also illustration in \autoref{Fig_Triangular_Young}. As any line of irrational slope contains at most one integral point, it follows that for all $n\in\N$ there is one and only one triangular partition of size $n$ on any ray associated to an irrational slope.

\newpage

\section{Triangular Dyck paths}\label{sec_dyck}
\begin{wrapfigure}{r}{0.37\textwidth}
\vskip-13pt
\includegraphics{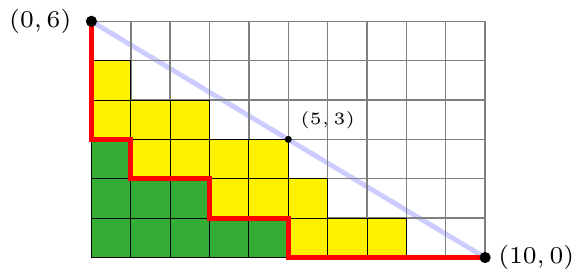}
\vskip-5pt
\wrapcaption{The $\tau_{(10,6)}$-Dyck path $531000$}
\vskip-10pt
\end{wrapfigure}
For a given triangular partition $\tau$, we consider the set $\bleu{ \Dyck_{\tau}}:=\bleu{\{\alpha\,|\, \alpha\subseteq \tau\}}$ of \define{$\tau$-Dyck paths}.
Observe that  conjugation gives a bijection between  $\Dyck_{\tau}$ and $\Dyck_{\tau'}$. 
We further write $\Dyck_{(r,s)}$, when $\tau=\tau_{rs}$, and say that its elements are 
\define{$(r\times s)$-Dyck paths}. Observe that  $\Dyck_{(r,n)}=\Dyck_{(kn,n)}$, for all $kn\leq r\leq kn+1$ and $k\in\N$, since the corresponding triangular partitions coincide. It is often convenient to consider that partitions $\Dyck_{\tau}$ are padded with zero parts to make them of length $n=l(\tau)+1$. 
As we have already mentioned, classical Dyck paths correspond to the case $\tau=\tau_{n,n}$, with $n\in\N$. For instance, we have the equivalent descriptions
\begin{align*}
   \Dyck_{(3,3)} 
                      &= \{210,110,200,100,000\},\\
                      &=\begin{Bmatrix} 
                      		\trois{2}{1}, 
				&\trois{1}{1}, 
				&\trois{2}{0}, 
				&\trois{1}{0}, 				
				&\trois{0}{0}
			\end{Bmatrix},\\
			&=\begin{Bmatrix} 
\begin{tikzpicture}[scale=.4]
\Yfillcolour{yellow!20}
\draw[blue,opacity=.2] (0,3)-- (3,0);
\tyng(0cm,0cm,2,1);
\draw [red,line width=.5mm] (0,3) -- (0,2) --(1,2) -- (1,1) -- (2,1) --  (2,0)-- (3,0);
\end{tikzpicture}, 
&
\begin{tikzpicture}[scale=.4]
  \Yfillcolour{yellow!20}
\draw[blue,opacity=.2] (0,3)-- (3,0);
\tyng(0cm,0cm,2,1);
\draw [red,line width=.5mm] (0,3) -- (0,2) --(1,2) -- (1,0)-- (3,0);
\end{tikzpicture}, 
&
\begin{tikzpicture}[scale=.4]
\Yfillcolour{yellow!20}
\draw[blue,opacity=.2] (0,3)-- (3,0);
\tyng(0cm,0cm,2,1);
\draw [red,line width=.5mm] (0,3) -- (0,1) --(2,1) -- (2,0)-- (3,0);
\end{tikzpicture}, 
&, 
\begin{tikzpicture}[scale=.4]
\Yfillcolour{yellow!20}
\draw[blue,opacity=.2] (0,3)-- (3,0);
\tyng(0cm,0cm,2,1);
\draw [red,line width=.5mm] (0,3) -- (0,1) --(1,1) -- (1,0)-- (3,0);
\end{tikzpicture}, 
&
\begin{tikzpicture}[scale=.4]
\Yfillcolour{yellow!20}
\draw[blue,opacity=.2] (0,3)-- (3,0);
\tyng(0cm,0cm,2,1);
\draw [red,line width=.5mm] (0,3) -- (0,0)-- (3,0);
\end{tikzpicture}
\end{Bmatrix}.
\end{align*}
As is very well known, these are counted by the Catalan numbers $\Cat_n=\frac{1}{n+1}\binom{2n}{n}$. In general, we set \define{$\Cat_\tau:=\#\Dyck_\tau$}.
For integral partitions $\tau_{mn}$, the greatest common divisor $d=\gcd(m,n)$ plays an interesting role in our story. We have $m=ad$ and $n=bd$, for coprime integers $a$ and $b$. The cells of $\tau_{mn}$ lying on its diagonal $\partial(\tau_{mn})$ are of the form $(ak,bk)$, with $0< k< d$.

\begin{wrapfigure}{r}{0.20\textwidth}
\vskip-10pt
\includegraphics[width=0.18\textwidth]{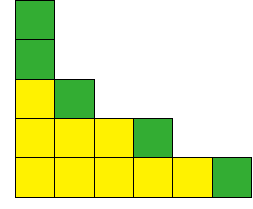}
\wrapcaption{$(\alpha+1^5)/\alpha$}
\label{Figure_Skew_Diag}
\vskip-10pt
\end{wrapfigure}
In preparation for upcoming notions, it will be interesting to consider the skew partition $(\alpha+1^n)/\alpha$, in which $\alpha+1^n$ stands for the partition obtained by adding one to each of the parts of $\alpha$ including its zero parts. The above skew partition is readily seen to consist of ``independant'' columns, as illustrated in \autoref{Figure_Skew_Diag} with $\alpha=53100$.
The sequence of sizes of these columns is precisely $\mu(\alpha)=21011$. The choice of $n$ will depend on the context, and it will be larger than the number of parts of $\alpha$.
For a fixed $\tau=\tau_1\tau_2\cdots \tau_n$, the \define{$\tau$-area} (or simply \define{area}) of a $\tau$-Dyck path $\alpha$ is the number of cells lying  in the skew shape $\tau/\alpha$. In terms of the \define{$\tau$-area sequence}  $(a_i)_{1\leq i\leq n}$ of $\alpha$, in which  $a_i:=\tau_i-\alpha_i$, we clearly have  $\area_{\tau}(\alpha):=\sum_{i=1}^n a_i$.
\begin{definition}\label{define_j_area}
 For any $J$ subset of $\{i\,|\, 1\leq i\leq n\}$, we consider the $J$-area: 
   $$\bleu{\area_{\tau}^J(\alpha)}:=\bleu{\sum_{i\in J} a_i}.$$
 \end{definition}

\begin{figure}[ht]
\includegraphics[scale=.9]{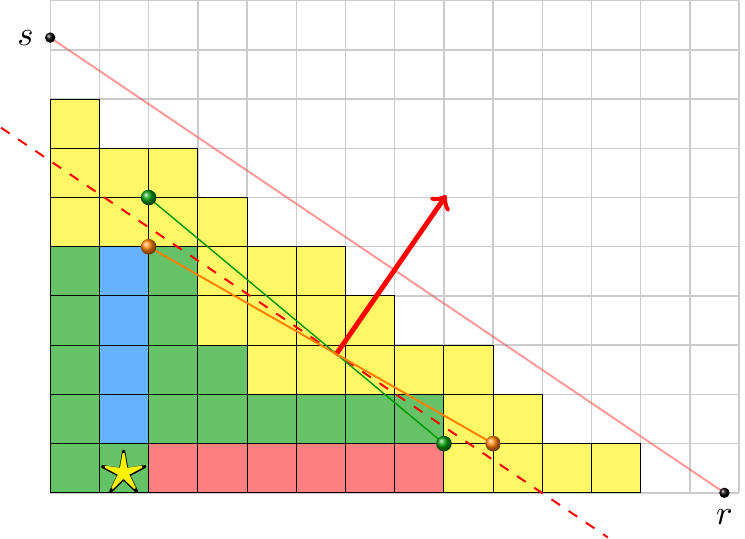}
\caption{A sim cell of $\alpha=(8,8,4,3,3)$ relative to $\tau=(12,10,9,7,6,4,3,1)$.}\label{Fig_Dinv_Set}
\end{figure}
\subsection{Similar cells (Diagonal inversions)} 
Let $\tau$ be a given triangular partition. 
With the notations of \autoref{def_slope_t}, for any $\alpha\subseteq \tau$ consider the set of cells $c$ of $\alpha$ that have hooks in $\alpha$ that are ``similar'' to $\tau$:
 $$\bleu{\Sim_\tau(\alpha)}:=\bleu{\{ c\in \alpha\ |\ t'(c,\alpha)\leq t_\tau<t''(c,\alpha)) \}},$$ 
where $t_\tau:=({t^{-}_\tau+t^{+}_\tau})/{2}$.
The cardinality of this set, denoted by \define{$\simd_\tau(\mu)$}, is the \define{similarity index}\footnote{This just another name for the ``dinv'' statistic, that we choose to stress its natural geometrical meaning.} of $\mu$ with respect to $\tau$. Expressed in words, the \define{similarity index} of $\mu$ with respect to $\tau$ counts the number of cells of $\mu$ having ``hook triangle slope vectors''  compatible with the (average) slope vector of $\tau$ (represented by $\overline{v}$ in \autoref{fig_hook_slopes}). Since at corner cells, we have $\ell=0$ and $a=0$, we get
$\ell/(a+\ell+1)=0$ and $(\ell+1)/(a+\ell+1)=1$. Hence, any corner cell of $\alpha$ is always a $\tau$-sim-cells, irrespective of $\tau$.
The sim-cells of the partitions contained in $\tau=32$ are marked by stars in \autoref{FigDinv}.

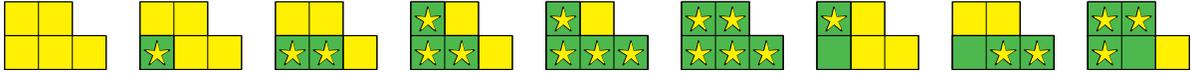
\begin{figure}[ht]
\begin{tikzpicture}[scale=.45]
\ylw
\foreach \x in {0,4,8,12,16,20,24,28,32} {\tyng(\x cm,0cm,3,2);}
\Yfillcolour{green!70}
\tyng(4cm,0cm,1); \Star{4}{0}
\tyng(8cm,0cm,2); \Star{8}{0}\Star{9}{0}
\tyng(12cm,0cm,2,1);  \Star{12}{0}
                                  \Star{12}{1}
                                  \Star{13}{0}
\tyng(16cm,0cm,3,1); \Star{16}{0}\Star{16}{1}\Star{17}{0}\Star{18}{0}
\tyng(20cm,0cm,3,2); \Star{20}{0}\Star{20}{1}\Star{21}{0}\Star{21}{1}\Star{22}{0}
\tyng(24cm,0cm,1,1); \Star{24}{1}
\tyng(28cm,0cm,3); \Star{29}{0}\Star{30}{0}
\tyng(32cm,0cm,2,2); \Star{32}{0}\Star{32}{1}\Star{33}{1}
\end{tikzpicture}
	\caption{Sim-cells for subpartitions of $32$. The first 6 subpartitions are similar to $32$.}
	\label{FigDinv}
\end{figure}

The following is an example of a sequence of similar triangular partitions:
\begin{center}
\ylw
 \Yboxdim{.2cm}
\begin{align*}
&\varepsilon,\yng(1), \yng(1,1),\yng(2,1), \yng(2,1,1),\yng(2,2,1),\yng(3,2,1), \yng(3,2,1,1),\yng(3,2,2,1),\yng(3,3,2,1),\yng(4,3,2,1),
\yng(4,3,2,1,1),\yng(4,3,2,2,1),\yng(4,3,3,2,1),\yng(4,4,3,2,1),\yng(5,4,3,2,1).
\end{align*}
\end{center}
They all share the common slope vector $(1/2+\epsilon,1/2-\epsilon)$.

\begin{lemma}[with B. Dequ\^{e}ne]\label{Lemma_unique_complementary_area}
If $\tau$ is a triangular partition, then the subpartitions $\alpha\subseteq \tau$ such that $\area_\tau(\alpha)+\simd_\tau(\alpha)=|\tau|$ are exactly those that lie on the ray corresponding to the slope vector $(t_\tau+\epsilon,1-t_\tau-\epsilon)$. 
\end{lemma}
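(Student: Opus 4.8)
The plan is to reduce the stated identity to a clean statement about the cone invariants $t^-_\alpha$ and $t^+_\alpha$ of $\alpha$, and then to recognize that statement as the defining condition of the ray just above $t_\tau$.

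First I would observe that the area term collapses. Since $\alpha\subseteq\tau$ and both are padded to the same length, $\area_\tau(\alpha)=\sum_i(\tau_i-\alpha_i)=|\tau|-|\alpha|$. Hence the equation $\area_\tau(\alpha)+\simd_\tau(\alpha)=|\tau|$ is equivalent to $\simd_\tau(\alpha)=|\alpha|$, that is, to the requirement that \emph{every} cell of $\alpha$ be a $\tau$-sim-cell. Next I would unwind the sim condition cell by cell: a cell $c\in\alpha$ is a sim-cell precisely when $t'(c,\alpha)\le t_\tau<t''(c,\alpha)$, where $t'(c,\alpha)=\ell(c)/(a(c)+\ell(c)+1)$ and $t''(c,\alpha)=(\ell(c)+1)/(a(c)+\ell(c)+1)$ are computed inside $\alpha$. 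Imposing this for all $c$ simultaneously amounts to $\max_{c}t'(c,\alpha)\le t_\tau$ together with $t_\tau<\min_{c}t''(c,\alpha)$. These two extrema are exactly the quantities $t^-_\alpha$ and $t^+_\alpha$ of \autoref{lemma_slope}, so ``every cell of $\alpha$ is a sim-cell'' is equivalent to the single inequality
\[
t^-_\alpha\le t_\tau<t^+_\alpha .
\]
In particular this forces $t^-_\alpha<t^+_\alpha$, so by \autoref{lemma_slope} the partition $\alpha$ is automatically triangular; no triangularity hypothesis on $\alpha$ need be assumed beforehand.

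Finally I would match this inequality with membership in the ray. The asymmetry ``$\le$ on the left, $<$ on the right'' is precisely what singles out the slope just above $t_\tau$: for $\epsilon>0$ small enough (smaller than every nonzero gap $|t_\tau-t^{\pm}_\beta|$ over the finitely many $\beta\subseteq\tau$), the ray condition $t^-_\alpha<t_\tau+\epsilon<t^+_\alpha$ holds if and only if $t^-_\alpha\le t_\tau$ and $t_\tau<t^+_\alpha$, i.e. if and only if $t^-_\alpha\le t_\tau<t^+_\alpha$. Thus $\alpha$ satisfies the area-plus-sim identity exactly when $\alpha\in R_{(t_\tau+\epsilon,\,1-t_\tau-\epsilon)}$. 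To see that these are precisely the relevant subpartitions of $\tau$, I would note that $\tau$ itself lies on this ray, since $t^-_\tau<t_\tau<t^+_\tau$ by the choice $t_\tau=(t^-_\tau+t^+_\tau)/2$; because a ray is a chain for containment, every ray element of size at most $|\tau|$ is then contained in $\tau$, while conversely every $\alpha\subseteq\tau$ has size at most $|\tau|$. Hence the ray elements sitting inside $\tau$ are exactly the subpartitions meeting the identity.

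I expect the only delicate point to be the bookkeeping around $\epsilon$ and the strict versus non-strict inequalities: one must verify that the \emph{half-open} condition $t^-_\alpha\le t_\tau<t^+_\alpha$ (neither fully open nor fully closed) is what the ray just above $t_\tau$ records, and that a single $\epsilon$ can be chosen uniformly because only finitely many $\alpha\subseteq\tau$ occur. Everything else is a direct translation through \autoref{lemma_slope}.
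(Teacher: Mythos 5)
Your proposal is correct, but there is nothing in the paper to compare it against: the paper states this lemma (credited as joint work with B.~Dequ\^ene) with no accompanying proof, so your argument supplies one rather than paralleling an existing one. The reduction is sound: since $\area_\tau(\alpha)=|\tau|-|\alpha|$ for $\alpha\subseteq\tau$, the equation collapses to $\simd_\tau(\alpha)=|\alpha|$, i.e.\ every cell of $\alpha$ is a $\tau$-sim-cell; taking the maximum of $t'(c,\alpha)$ and the minimum of $t''(c,\alpha)$ over $c\in\alpha$ turns this into the half-open condition $t^-_\alpha\le t_\tau<t^+_\alpha$, which by \autoref{lemma_slope} also forces $\alpha$ to be triangular, so no triangularity hypothesis on $\alpha$ is needed. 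The delicate point is indeed the $\epsilon$ bookkeeping, and you handled it correctly: choosing $\epsilon$ below every nonzero gap $|t_\tau-t^{\pm}_\beta|$ over the finitely many $\beta\subseteq\tau$ (a finite set of values, as each $\beta$ has finitely many cells), the ray condition $t^-_\alpha<t_\tau+\epsilon<t^+_\alpha$ becomes equivalent to the half-open condition, with the mixed strict/non-strict inequalities in the definition of $\Sim_\tau(\alpha)$ matching exactly the perturbation of the slope to just above $t_\tau$. Finally, the chain argument (the ray is a containment chain through $\tau$, since $t^-_\tau<t_\tau+\epsilon<t^+_\tau$ for such $\epsilon$, so its members of size at most $|\tau|$ are precisely its members contained in $\tau$) correctly reconciles your characterization with the lemma's phrasing ``lie on the ray,'' which a priori includes partitions larger than $\tau$.
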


\section{Counting \texorpdfstring{$\tau$}{t}-Dyck paths}
The enumeration of triangular Dyck paths has an interesting (ongoing) history, which has up to now been restricted to the integral case. Even if the simple counting in the case of any integer pairs $(m,n)$ had already been worked out in the 1950s (see~\cite{MR61567}), it is only rather recently that the overall enumerative combinatorics community has become aware of that fact. For a long time, only  the ``coprime case'' was deemed really understood, recently going under the name of rational Catalan combinatorics. 
These include the classical \define{Fuss-Catalan} numbers when $m=kn$ (or equivalently when $m=kn+1$). A direct extension of the classical ``cycling'' argument shows that, for $m$ and $n$ coprime integers, the number $(m,n)$-Dyck paths is given by the formula
\begin{equation}
\bleu{\Cat_{mn}}=\bleu{\frac{1}{m+n}\binom{m+n}{n}}.
\end{equation}
A formula for the non-coprime case of $\tau_{mn}$ is described in the next subsection.

\subsection{Bizley formula}
In the general ``integral'' situation, the enumeration formula of $(m\times n)$-Dyck paths takes the form of a sum of terms indexed by partitions of the greatest common divisor of $m$ and $n$. This is what makes it harder to ``guess'' a formula\footnote{Most guessing approaches rely (directly or indirectly) on the fact the numbers considered have nice factorization in small prime numbers.}, since the numbers obtained do not factor nicely in general, even if they are effectively sums of nicely factorized numbers. The \define{Grossman-Bizley} formula (see~\cite{MR61567}) is:
  \begin{equation}\label{formule_bizley}
      \bleu{\Cat_{mn}}:=\bleu{\sum_{\alpha\vdash d} \frac{1}{z_\alpha}\prod_{k\ \hbox{\tiny \rm part\ of\ } \alpha} \frac{1}{a+b} \binom{k(a+b)}{ka}},
   \end{equation}
where $(m,n)=(ad,bd)$ with $a$ and $b$ coprime, so that $d=\gcd(m,n)$.   It is worth recalling that $d!/z_\alpha$ is the number of size $d$ permutations of cycle type $\alpha$, with
 		\begin{displaymath} \bleu{z_\alpha}:=\bleu{\prod_{i} i^{c_i}\, c_i!},\end{displaymath}
where $\alpha$ has $c_i$ parts of size $i$.
Specific examples of \autoref{formule_bizley} are:
\begin{equation}\label{specific_Bizley}
\begin{split}
  \bleu{\Cat_{2a,2b}}&= \bleu{\textstyle \frac{1}{2} \left(\frac{1}{a+b} \binom{a+b}{a}\right)^2+\frac{1}{2}\left(\frac{1}{a+b} \binom{2a+2b}{2a}\right)},\\[4pt]
  \bleu{\Cat_{3a,3b}}&=\bleu{\textstyle\frac{1}{6}\left(\frac {1}{ a+b} \binom{a+b}{a}\right)^3
  +\frac{1}{2}\left(\frac {1}{a+b} \binom{a+b}{a}\right)\left(\frac{1}{a+b} \binom{2a+2b}{2a}\right) +\frac{1}{3}\left(\frac {1}{a+b} \binom{3a+3b}{3a}\right).}
\end{split}
\end{equation}
Observe that, for fixed coprime numbers $a$ and $b$, all the formulas for $(m,n)=(ad,bd)$, with $0\leq d$, may be presented in the form of the generating function:
\begin{equation}\label{bizley_gen}
   \bleu{ \sum_{d=1}^\infty \Cat_{ad,bd}\ z^d} =\bleu{ \exp\!\Big(\sum_{k\geq 1}{\textstyle \frac{1}{a+b} \binom{k(a+b)}{ka}}\, z^k/k\Big)}.
\end{equation} 
As it happens, this is a specialization of a more general formula (see \autoref{bizley_gen_sym}). 

\subsection{Explicit number of Dyck paths for all triangular partitions}
For all triangular partitions $\tau$ of size at most $9$, the number $\Cat_\tau$ of $\tau$-Dyck paths may be found in the \autoref{TableDyck}.  In the next subsection we will see how to calculate these numbers recursively.
\begin{table}[ht]
\def\arraystretch{1.25}
 \begin{tiny}
\begin{tabular}{|c|c|c|c|c|c|c|c|c|c|c|c|c|c|}
\cline{1-1}
$1$\\ \cline{1-1}
\rowcolor{yellow} $ 2$ \\ \cline{1-2}
$1^2$ & $2$ \\ \cline{1-2}
\rowcolor{yellow} $3$ & $3$ \\
\cline{1-3}
$1^3$ & $21$ & $3$ \\ \cline{1-3}
\rowcolor{yellow} $4$ & $5$ & $4$ \\
\cline{1-4}
$1^4$ & $21^2$ & $31$ & $4$ \\
\cline{1-4}
\rowcolor{yellow}  $5$ & $7$ & $7$ & $5$ \\
\cline{1-6}
$1^5$ & $21^3$ & $2^21$ & $32$ & $41$ & $5$ \\
\cline{1-6}
\rowcolor{yellow} $6$ & $9$ & $9$ & $9$ & $9$ &  $6$ \\
\cline{1-7}
$1^6$ & $21^4$ & $2^21^2$ & $321$ & $42$ & $51$ & $6$ \\
\cline{1-7}
\rowcolor{yellow} $7$ & $11$ &   $12$ &  $14$ &   $12$ & $11$ &  $7$ \\
\cline{1-8}
$1^7$ & $21^5$ & $2^21^3$ & $3211$ & $421$ & $52$ & $61$ & $7$ \\
\cline{1-8}
\rowcolor{yellow}  $8$ & $13$ &  $15$ & $19$ & $19$ &   $15$ & $13$ &  $8$ \\
\cline{1-10}
$1^8$ & $21^6$ & $2^21^4$ & $2^31^2$ & $32^21$ & $431$ & $53$ & $62$ & $71$ & $8$ \\
\cline{1-10}
\rowcolor{yellow} $9$ & $15$ & $18$ & $18$ &   $23$ &   $23$ & $18$ & $18$ & $15$ &  $9$ \\
\cline{1-12}
$1^9$ & $21^7$ & $2^21^5$ & $2^31^3$ & $32^21^2$ & $3^221$ & $432$ & $531$ & $63$ & $72$ & $81$ & $9$ \\
\cline{1-12}
\rowcolor{yellow} $10$ & $17$ & $21$ &  $2^2$ &   $30$ & $28$ & $28$ &  $30$ &  $2^2$ & $21$ & $17$ & $10$ \\
\cline{1-12}
%
\end{tabular}
\end{tiny}
\medskip
\caption{Number of $\tau$-Dyck paths}\label{TableDyck}
\end{table}

\subsection{General recursive formula for triangular partitions}
For any partition triangular $\tau$, the $q$-area enumerator of $\tau$-Dyck paths, is
   \begin{equation}
      \bleu{\Cat_{\tau}(q)}:=\bleu{\sum_{\alpha\subseteq \tau} q^{\area_{\tau}(\alpha)}}.
   \end{equation}
Since conjugation is an area-preserving bijection between the set of $\tau$-Dyck paths and the set of $\tau'$-Dyck paths, we clearly have $ \Cat_{\tau}(q)=\Cat_{\tau'}(q)$. In preparation for the upcoming proposition, let us consider the following notions. The \define{bounding word $w_\mu$}\index{partition!bounding word}, of a partition $\mu$, encodes the simplest southeast lattice path such that the cells of the diagram of $\mu$  are those that sit below $w_\mu$.  Thus $w_\mu:=0^{r_k}1\cdots 0^{r_2}1\cdots 0^{r_{1}}1$,
  where $r_k=\mu_k$, and $r_i=\mu_{i}-\mu_{i+1}$, for $1\leq i< k$. For two partitions $\alpha$ and $\beta$, we denote by $\alpha\odot \beta$ the partition whose bounding word/path is the concatenation of words: $w_\alpha\cdot w_\beta$.  The empty partition acts as the identity for this associative product. 
Three-way decompositions of the form $\nu=\alpha\odot \gamma \odot \beta$, with $\gamma$ indicating a one cell partition, will be of special interest. Observe that the middle one cell partition ``$\gamma$'' necessarily corresponds to a corner of $\nu$. For example,
\begin{figure}[ht]
\includegraphics{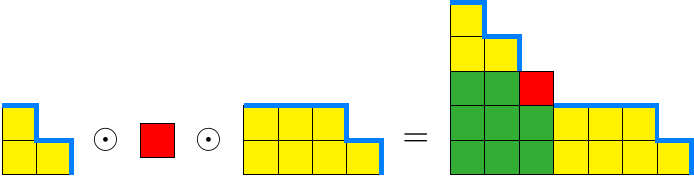}
\caption{A decomposition $\alpha\odot \gamma \odot \beta$.}\label{FigThreeWay}
\end{figure} 
Let us denote by $\Delta(\tau)$ the set of pairs $(\alpha,\beta)$ corresponding to three-way decompositions of $\tau$, of the form $\tau=\alpha\odot \gamma\odot \beta$, with the corner cell $\gamma$ sitting on the diagonal of $\tau$. In formula,
\begin{equation}
	\bleu{\Delta(\tau)} :=\bleu{\{ (\alpha,\beta)\ |\ \tau=\alpha\odot \gamma\odot \beta,\quad {\rm and}\quad 
		\gamma\in\partial_\tau\}}.
 \end{equation}
Recall also that $\tau^\circ$ stands for the interior of $\tau$, which is obtained by removing from $\tau$ all the cells on its diagonal. 
Then, the number $\Cat_\tau$ of $\tau$-Dyck paths may efficiently be calculated with the following recursive formula. 
\begin{figure}[ht]
\includegraphics{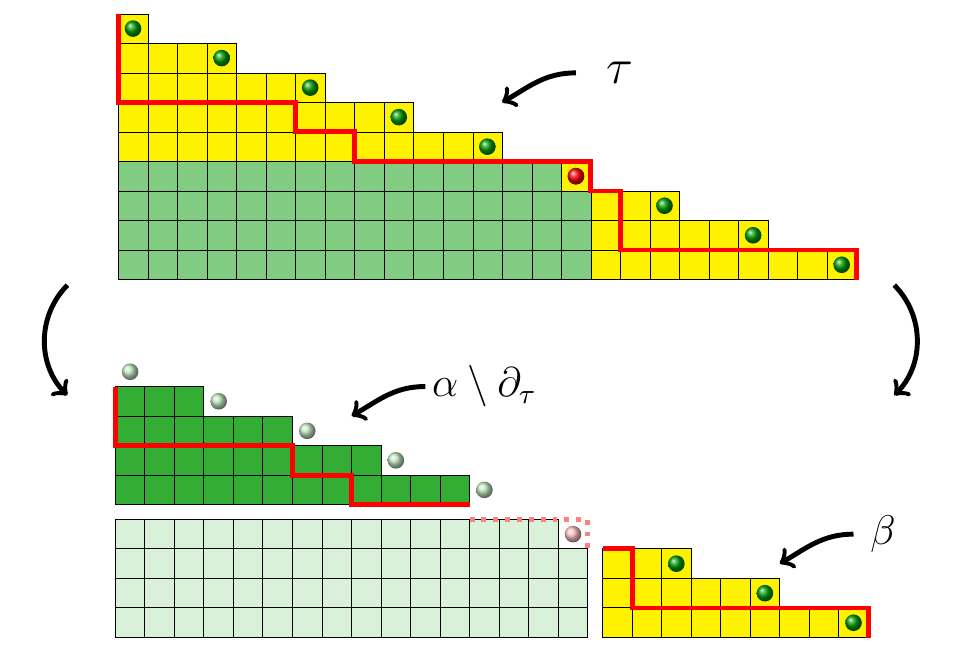}
	\caption{First return  cell \protect\includegraphics{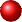}, with \protect\includegraphics{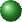} marking other diagonal cells.}
	\label{FigRecurrence}
\vskip-10pt
\end{figure} 

\begin{proposition}\label{proposition_Delta_recurrence}
Denoting by $\partial=\partial_\tau$ the diagonal of a given triangular partition $\tau$, then for the $q$-area enumerator of $\tau$-Dyck, we have the recurrence
\begin{equation}\label{General_q_Catalan_recurrence}
    \bleu{\Cat_\tau(q)}=\bleu{q^{|\partial|}\,\Cat_{\tau^\circ}(q)+\sum_{(\alpha,\beta) \in \Delta(\tau)} q^{|\alpha\cap \partial|}\,\Cat_{\alpha\setminus \partial}(q)\,\Cat_\beta(q)},
\end{equation}
with initial condition $\Cat_\varepsilon(q)=1$.
In particular, setting $q=1$, we have
\begin{equation}\label{General_Catalan_recurrence}
    \bleu{\Cat_\tau}=\bleu{\Cat_{\tau^\circ}+\sum_{(\alpha,\beta) \in \Delta(\tau)} \Cat_{\alpha\setminus \partial} \, \Cat_\beta}.
\end{equation}
\end{proposition}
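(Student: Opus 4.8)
The plan is to establish the $q$-identity~\eqref{General_q_Catalan_recurrence} by a direct bijective decomposition of $\Dyck_\tau$, a weighted refinement of the classical first-return argument. I would sort each $\tau$-Dyck path $\pi\subseteq\tau$ according to how its diagram meets the diagonal $\partial=\partial_\tau$, whose cells I list as $c_1,\dots,c_k$ from the top-left removable corner $c_1$ down to the bottom-right removable corner $c_k$. The paths meeting no cell of $\partial$ are precisely those with $\pi\subseteq\tau^\circ$ (here I use that $\tau^\circ$ is again triangular, as noted after \autoref{lemma: even cycle}); since $|\tau|=|\tau^\circ|+|\partial|$, every such path satisfies $\area_\tau(\pi)=\area_{\tau^\circ}(\pi)+|\partial|$, so their joint contribution to $\Cat_\tau(q)$ is exactly the first summand $q^{|\partial|}\Cat_{\tau^\circ}(q)$. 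All remaining work concerns the paths that do meet $\partial$.

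For such a path I would define its \emph{first-return cell} to be $\gamma=c_m$ with $m=\min\{i:c_i\in\pi\}$, the topmost diagonal cell lying in $\pi$. The corner $\gamma$ determines the three-way factorization $\tau=\alpha\odot\gamma\odot\beta$ in $\Delta(\tau)$, and the key geometric step is to read this factorization off concretely: writing $\gamma=(\alpha_1,\ell(\beta))$, the diagram of $\tau$ splits as a disjoint union of the full rectangle $R_\gamma$ of width $\alpha_1+1$ and height $\ell(\beta)+1$ (with north-east corner $\gamma$), a translated copy of $\beta$ occupying the columns right of $R_\gamma$ in its bottom $\ell(\beta)$ rows, and a translated copy of $\alpha$ occupying the rows above $R_\gamma$. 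Here I must check that $\gamma$ splits the diagonal monotonically, namely that $\alpha\cap\partial=\{c_1,\dots,c_{m-1}\}$ while $c_{m+1},\dots,c_k$ fall in the $\beta$-region; this uses the structure of $\partial$ (\autoref{lemma: even cycle}) together with the similarity of $\alpha$ and $\beta$ to $\tau$, and it is the main obstacle of the argument. Everything downstream is bookkeeping.

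Granting the splitting, the bijection is immediate. Since $\pi$ is a Young diagram containing $\gamma$, it contains all of $R_\gamma$; set $\rho:=\pi\cap(\beta\text{-region})\subseteq\beta$ and $\sigma:=\pi\cap(\alpha\text{-region})\subseteq\alpha$. Minimality of $m$ means $\pi$ avoids $c_1,\dots,c_{m-1}=\alpha\cap\partial$, whence $\sigma\subseteq\alpha\setminus\partial$; conversely any pair $(\sigma,\rho)$ with $\sigma\subseteq\alpha\setminus\partial$ and $\rho\subseteq\beta$ reassembles to a unique $\pi=R_\gamma\cup\sigma\cup\rho$ whose topmost diagonal cell is again $\gamma$. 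I would verify that this glued $\pi$ is a partition contained in $\tau$: the only inequalities to check are $\sigma_1\le\alpha_1+1$ at the seam above $R_\gamma$ and $\rho_s\le\beta_s$ in the bottom rows, both automatic from $\sigma\subseteq\alpha$ and $\rho\subseteq\beta$. Thus, for fixed $\gamma$, the paths with first-return $\gamma$ are in bijection with $\{(\sigma,\rho):\sigma\subseteq\alpha\setminus\partial,\ \rho\subseteq\beta\}$.

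It then remains to track the area. From the disjoint decompositions $|\tau|=|R_\gamma|+|\alpha|+|\beta|$ and $|\pi|=|R_\gamma|+|\sigma|+|\rho|$ one obtains
\begin{equation*}
\area_\tau(\pi)=(|\alpha|-|\sigma|)+(|\beta|-|\rho|)=|\alpha\cap\partial|+\area_{\alpha\setminus\partial}(\sigma)+\area_\beta(\rho),
\end{equation*}
using $|\alpha|=|\alpha\cap\partial|+|\alpha\setminus\partial|$ together with $\sigma\subseteq\alpha\setminus\partial$. Summing $q^{\area_\tau(\pi)}$ over all paths with first-return $\gamma$ therefore factors as $q^{|\alpha\cap\partial|}\Cat_{\alpha\setminus\partial}(q)\,\Cat_\beta(q)$, and summing over the diagonal corners $\gamma$ — equivalently over $(\alpha,\beta)\in\Delta(\tau)$ — produces the second term of~\eqref{General_q_Catalan_recurrence}. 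Adding back the contribution of the paths avoiding $\partial$ gives the full identity, and the specialization $q=1$ yields~\eqref{General_Catalan_recurrence}.
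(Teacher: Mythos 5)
Your proof is correct and takes essentially the same approach the paper intends: the paper's own justification of Proposition \ref{proposition_Delta_recurrence} is a one-line appeal to a ``suitably adapted first return to diagonal'' argument, which is precisely the decomposition you carry out, with the sum over first-return cells $\gamma\in\partial_\tau$ matching the sum over $\Delta(\tau)$. Your write-up simply supplies the details the paper leaves implicit --- the splitting of $\tau$ into $R_\gamma$ and the $\alpha$- and $\beta$-regions, the bijection $\pi\leftrightarrow(\sigma,\rho)$, and the area bookkeeping $\area_\tau(\pi)=|\alpha\cap\partial|+\area_{\alpha\setminus\partial}(\sigma)+\area_\beta(\rho)$ --- and these details check out.
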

This is a direct generalization of the well-known classical recurrence for Dyck path. Its proof corresponds to a suitably adapted ``first return to diagonal'' argument typically used in the proof of the classical case (see \autoref{FigRecurrence}). It is noteworthy that all possible $\alpha\setminus \partial$ and $\beta$ that occur in the right-hand side of \autoref{General_q_Catalan_recurrence} are triangular, as they are respectfully factors of $\tau^\circ$ and $\tau$. Hence, for a given partition $\tau$, the set of partitions that will arise in the recurrence can only be factors (for the $\odot$-product) of partitions obtained by successive removal of diagonals. Hence, they all have a slope in common with that of $\tau$.

The $q$-area enumerators for all triangular partitions of size at most $6$ are as follows (avoiding repetitions for conjugate partitions):
$$\begin{array}{ll llrcl}
 \Cat_{1} =& q + 1,& \Cat_{2} =& q^{2} + q + 1,\\
 \Cat_{21} =& q^{3} + q^{2} + 2 q + 1, & \Cat_{3} =& q^{3} + q^{2} + q + 1,\\
\Cat_{31} =& q^{4} + q^{3} + 2 q^{2} + 2 q + 1, &\Cat_{4} =& q^{4} + q^{3} + q^{2} + q + 1,\\
\Cat_{32} =& q^{5} + q^{4} + 2 q^{3} + 2 q^{2} + 2 q + 1,& \Cat_{41} =& q^{5} + q^{4} + 2 q^{3} + 2 q^{2} + 2 q + 1,\\
\Cat_{5} =& q^{5} + q^{4} + q^{3} + q^{2} + q + 1, &\Cat_{321} =& q^{6} + q^{5} + 2 q^{4} + 3 q^{3} + 3 q^{2} + 3 q + 1,\\
\Cat_{42} =& q^{6} + q^{5} + 2 q^{4} + 2 q^{3} + 3 q^{2} + 2 q + 1,&\Cat_{51} =& q^{6} + q^{5} + 2 q^{4} + 2 q^{3} + 2 q^{2} + 2 q + 1,\\
\Cat_{6} =& q^{6} + q^{5} + q^{4} + q^{3} + q^{2} + q + 1.
\end{array}$$

\subsection[Counting by area and Dinv]{Counting by Area and Sim}
As in \cite{2102.07931}, we may consider the enumeration of $\tau$-Dyck paths with respect to two statistics: ``area'' and ``sim'', in the triangular case\footnote{Our ``sim'' is the ``dinv'' of {\sl loc. cit.}}. The resulting $(q,t)$-polynomials:
   \begin{equation}\label{cat_tau_combinatoire}
      \bleu{\Cat_{\tau}(q,t)}:=\bleu{\sum_{\alpha\subseteq \tau} q^{\area_{\tau}(\alpha)}t^{\dinv_{\tau}(\alpha)}},
   \end{equation}
plays a central role in a wide range of subjects. Once again, applying conjugation to partitions $\alpha$ contained in $\tau$, it is easy to check that $\Cat_{\tau'}(q,t)=\Cat_{\tau}(q,t)$. It follows from \autoref{Lemma_unique_complementary_area} that, for any triangular partition $\tau$ of size $n$, we have
\begin{align}
   \bleu{\Cat_{\tau}(q,t)}& = \bleu{(q^n+q^{n-1}t+\ldots + q t^{n-1}+t^n)+\ldots },\nonumber\\
       &=\bleu{s_n(q,t)+\ldots} \label{observation_Dn}
\end{align}
where the remaining terms are of degree (strictly) less than $n$. In the particular case $\delta_k=(k,k-1,\ldots,2,1,0)$, one may further see that
  \begin{equation}
   \bleu{\Cat_{\delta_k}(q,t)}=\bleu{s_{\binom{k+1}{2}}(q,t) +\Big(\sum_{j=\binom{k}{2}}^{\binom{k+1}{2}-2} s_{j,1}(q,t)\Big) + \ldots },
 \end{equation}
 with the missing terms only involving Schur functions indexed by partitions having a second part larger or equal to $2$. Taking into account  the symmetry $\Cat_{\tau}=\Cat_{\tau'}$ to avoid unnecessary repetitions, the values of $\Cat_{\tau}$ for (all) triangular partitions of size at most $8$ are as given in \autoref{table_D_tau}. The entries are expressed in terms of Schur functions $s_\mu=s_\mu(q,t)$, so that the Schur positivity (see next section) of is made apparent.  
 \begin{table}[ht]
\begin{tabular}{lllllllllllllllllll}
$\left(0, \boldsymbol{1}\right)$ \\
$\left(1, s_{1}\right)$ \\
$\left(2, s_{2}\right)$ \\
$\left(21, s_{11} + s_{3}\right)$ & $\left(3, s_{3}\right)$ \\
$\left(31, s_{21} + s_{4}\right)$ & $\left(4, s_{4}\right)$ \\
$\left(32, s_{31} + s_{5}\right)$ & $\left(41, s_{31} + s_{5}\right)$ & $\left(5, s_{5}\right)$ \\
$\left(321, s_{31} + s_{41} + s_{6}\right)$ & $\left(42, s_{22} + s_{41} + s_{6}\right)$ & $\left(51, s_{41} + s_{6}\right)$ & $\left(6, s_{6}\right)$ \\
$\left(421, s_{32} + s_{41} + s_{51} + s_{7}\right)$ & $\left(52, s_{32} + s_{51} + s_{7}\right)$ & $\left(61, s_{51} + s_{7}\right)$ & $\left(7, s_{7}\right)$\\
$\left(431, s_{42} + s_{51} + s_{61} + s_{8}\right)$ & $\left(53, s_{42} + s_{61} + s_{8}\right)$ & $\left(62, s_{42} + s_{61} + s_{8}\right)$ & $\left(71, s_{61} + s_{8}\right)$ & $\left(8, s_{8}\right)$
\end{tabular}
\caption{Table of values $(\tau,\Cat_{\tau})$.}
\label{table_D_tau}
\end{table}

A constant term formula for  $\Cat_{\tau}(q,t)$ is given in~\cite[Prop. 7.2.1]{2102.07931}. It follows that $\Cat_{\tau}(q,t)$ is symmetric in $q$ and $t$, even though this is not evident in \autoref{cat_tau_combinatoire}. 
Setting  $t=1/q$, we often get nice product formulas. For instance,
In the case of $\tau_{ab}$, with $a$ and $b$ coprime integers, we have 
 \begin{equation}
    \bleu{q^{|\tau_{ab}|} \Cat_{\tau_{ab}}(q,1/q)}=\bleu{\frac{1}{[a+b]_q} \qbinom{a+b}{a}}.
 \end{equation} 
 For any triangular partition $\tau=(k,j)$, with two parts, we further have
 \begin{equation}
     \bleu{q^{|\tau|} \Cat_{\tau}(q,1/q)} = \bleu{\frac{[2k-j+2]_q [3(j+1)]_q}{[3]_q[2]_q}}.
  \end{equation}


\section{Generic Schur function expansion}
The polynomials $\Cat_{\tau}(q,t)$ are not only symmetric, but extensive explicit calculations suggest that they always expand positively in the Schur polynomials basis. The following conjecture of~\cite[Conj. 7.1.1]{2102.07931} is supported by extensive calculations (for all triangular partitions of $n\leq 28$); and in some instances proofs and/or justifications via representation theory (see~\cite{1909.03531,MR3811519}).
\begin{conjecture}\label{Conjecture_Schur_positive}
For any triangular partition $\tau$ of size $n$, the polynomial $\Cat_{\tau}(q,t)$ affords a  positive Schur-expansion 
   \begin{equation}
      \bleu{\Cat_{\tau}(q,t)}:=\bleu{\sum_{\lambda} c_\lambda^{\tau}\, s_\lambda(q,t)}, \qquad \hbox{{\rm \ie with coefficients}}\qquad \bleu{c_\lambda^{\tau}\in\N}.
   \end{equation}
 The sum runs over (length $\leq 2$) partitions  $\lambda$  such that  $|\lambda|\leq |\tau|$.
\end{conjecture}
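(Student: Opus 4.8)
The plan is to first translate Schur positivity into an equivalent unimodality statement, exploiting that only length-$\le 2$ partitions occur (so that the mere \emph{existence} of the expansion is automatic: the $s_{(a,b)}(q,t)$ with $a\ge b\ge 0$ form a basis of the symmetric polynomials in the two variables $q,t$, and the only content is the nonnegativity of the coefficients). In two variables one has the closed form
\[
s_{(a,b)}(q,t)=(qt)^b\,(q^{a-b}+q^{a-b-1}t+\cdots+t^{a-b}),
\]
so the monomials of $s_{(a,b)}$ of total degree $d=a+b$ are exactly $q^{a-k}t^{b+k}$ for $0\le k\le a-b$, each with coefficient $1$. Writing $\mu_j:=[q^{d-j}t^j]\,\Cat_\tau(q,t)$ for the coefficients along a fixed total degree $d$, an upper-triangular inversion gives
\[
c^\tau_{(d-j,j)}=\mu_j-\mu_{j-1},\qquad 0\le j\le \lfloor d/2\rfloor.
\]
Since we already know $\Cat_\tau(q,t)=\Cat_\tau(t,q)$ (hence $\mu_j=\mu_{d-j}$), \autoref{Conjecture_Schur_positive} is \emph{equivalent} to the assertion that, for every $d$, the symmetric integer sequence $(\mu_j)_{0\le j\le d}$ is unimodal, i.e.\ non-decreasing up to its center. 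Combinatorially, $\mu_j$ counts the $\alpha\subseteq\tau$ with $\dinv_\tau(\alpha)=j$ and $\area_\tau(\alpha)=d-j$, so the target is the symmetric unimodality, for each fixed value of $\area+\dinv$, of the distribution of $\dinv$.

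With this reduction in hand, the cleanest route I would pursue is to upgrade this unimodality to an $\mathfrak{sl}_2$-module structure. Concretely, let $V_\tau:=\bigoplus_{\alpha\subseteq\tau}\Q\,\alpha$, graded by $(\area_\tau,\dinv_\tau)$, and assign to the basis vector $\alpha$ the weight $\area_\tau(\alpha)-\dinv_\tau(\alpha)$ of a diagonal operator $H$. The goal is to produce a raising operator $E\colon V_\tau\to V_\tau$ sending the $(\area,\dinv)=(a,d)$ piece into the $(a-1,d+1)$ piece (lowering area by one while raising dinv by one, hence preserving $\area+\dinv$), together with a lowering operator $F$, so that $(E,H,F)$ satisfies the $\mathfrak{sl}_2$ relations. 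The representation theory of $\mathfrak{sl}_2$ then forces each total-degree slice to split into irreducibles, and the multiplicity of the $(d-2j+1)$-dimensional irreducible is exactly $c^\tau_{(d-j,j)}\ge 0$, proving the conjecture. The natural candidate for $E$ is a canonical local move on $\alpha\subseteq\tau$ that adds one cell of $\tau/\alpha$ near the cutting line (decreasing area by one) while provably increasing $\dinv_\tau$ by one; the planar structure of $\YoungTriangle$ from \autoref{section_order}, together with \autoref{Lemma_unique_complementary_area}, should help single out such moves and control the extreme weight spaces.

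The hard part---and the reason this remains a conjecture---is the explicit construction and the proof of injectivity of $E$ below the central weight: this is a Hard-Lefschetz type unimodality phenomenon, which already in the classical case $\tau=\delta_n$ is equivalent to deep facts about $\nabla e_n$ and the geometry of Hilbert schemes. An inductive alternative would refine \autoref{proposition_Delta_recurrence} to the $(q,t)$-level and argue on $|\tau|$: products of two-variable Schur-positive symmetric polynomials remain Schur-positive (the Littlewood--Richardson coefficients are nonnegative and the length-$\le 2$ truncation is harmless), so a subtraction-free $(q,t)$-recurrence assembled from the $\odot$-decompositions in $\Delta(\tau)$ would close the induction. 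The obstruction there is that the area-prefactors $q^{|\partial|}$ and $q^{|\alpha\cap\partial|}$ in \eqref{General_q_Catalan_recurrence} are not symmetric in $q,t$, so multiplying a Schur-positive factor by such a one-sided monomial destroys Schur positivity; one must first find the correct $\dinv$-bookkeeping that converts these powers into balanced factors (equivalently, into shifts by rectangular Schur functions $s_{(m,m)}=(qt)^m$) and check that the resulting recurrence is genuinely subtraction-free. Establishing either the $\mathfrak{sl}_2$ action or this balanced $(q,t)$-recurrence is the central obstacle; everything else reduces to the formal bookkeeping described above.
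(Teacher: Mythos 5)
There is an important mismatch of expectations here: the statement you were asked to prove is stated in the paper as a \emph{conjecture} (due to~\cite{2102.07931}), and the paper offers no proof of it --- only computational evidence up to $|\tau|\leq 28$ and pointers to representation-theoretic justifications in special cases. So there is no proof in the paper to compare yours against, and indeed your text is not a proof either: you say so yourself. What you do establish correctly is the standard reduction of the statement to unimodality. Since the $s_{(a,b)}(q,t)=(qt)^b(q^{a-b}+\cdots+t^{a-b})$ with $a\geq b\geq 0$ form a basis of symmetric polynomials in two variables, and since $\Cat_\tau(q,t)=\Cat_\tau(t,q)$ (which the paper gets from the constant-term formula of~\cite{2102.07931}), the triangular inversion $c^\tau_{(d-j,j)}=\mu_j-\mu_{j-1}$ is right, and \autoref{Conjecture_Schur_positive} is equivalent to the symmetric unimodality, for each fixed $d$, of the sequence $\mu_j=\#\{\alpha\subseteq\tau : \area_\tau(\alpha)=d-j,\ \dinv_\tau(\alpha)=j\}$. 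This bookkeeping is sound.

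The genuine gap is everything after that reduction. You never construct the raising operator $E$: no definition of the ``canonical local move'' on $\alpha\subseteq\tau$ is given, no verification that it raises $\dinv_\tau$ by exactly one, and no argument for injectivity below the central weight --- and that injectivity \emph{is} the conjecture, in Hard--Lefschetz disguise. The appeal to the planarity results of \autoref{section_order} and to \autoref{Lemma_unique_complementary_area} is only a hope that they ``should help''; neither result produces such an operator. Your alternative route via \autoref{proposition_Delta_recurrence} is likewise self-defeating as written: you correctly observe that the prefactors $q^{|\partial|}$ and $q^{|\alpha\cap\partial|}$ in \autoref{General_q_Catalan_recurrence} are not $(q,t)$-symmetric, so the recurrence cannot be promoted to a subtraction-free Schur-positive recursion without new ideas (a $\dinv$-compatible refinement of the first-return decomposition), and you do not supply them. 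In short: the reduction to unimodality is correct and worth keeping, but the proposal stops exactly at the open problem, so it does not prove the statement --- which, to be fair to you, nobody has: it remains open in the paper as well.
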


Specific values are:
   \begin{equation}
          \bleu{\Cat_{(n-k,k)}} = \bleu{\sum_{j=0}^k s_{(n-2j,j)}},
    \end{equation}
 whenever $n> 3k-1$, and  $\Cat_{2k-1,k} = \Cat_{2k,k-1}$. These exhaust all possibilities for two part triangular partitions, since we must have $n\geq 3k-1$ if we want $\tau=(n-k,k)$ to be triangular. See~\cite[section 3]{MR4145982}

\subsection{Several parameters} Conjecturally, there is a natural extension of the previous Schur-expansions that encompass situations involving more parameters, hence involving Schur functions indexed by partitions having more than two parts. In some instances these are obtained as the  $GL_k$-character of the $\S_n$-alternating component of representations of $GL_k\times \S_n$, as discussed in \cite{2003.07402,1909.03531,MR3682396}, thus explaining the Schur positivity. In the representation theoretic framework, one can show that these expressions become stable once $k\geq n$.  We may thus present them as  positive integer coefficient linear combination of ``formal'' Schur expansions   $s_\lambda(\boldsymbol{q})=s_\lambda(q_1,q_2,\ldots,q_k)$:
\begin{equation}
   \bleu{\Cat_{\tau}(\boldsymbol{q})}:=\bleu{\sum_{|\lambda|\leq |\tau|} c_\lambda^{\tau}\, s_\lambda(\boldsymbol{q})}.
  \end{equation}
 Writing $\rho(\tau)$  for $\min(l(\tau),l(\tau'))$, we may summarize our theoretical and experimental observations about these as follows:
\begin{enumerate}
\item for all $\tau$:
   \begin{equation}
   \bleu{ \Cat_{\tau}(q,t)}=\bleu{\sum_{\alpha\subseteq \tau} t^{\dinv_{\tau}(\alpha)} q^{\area_{\tau}(\alpha))}};
  \end{equation}
\item for all $\tau$, 
 \begin{equation}\label{conjugate_property}
     \bleu{\Cat_{\tau}(\boldsymbol{q})}=\bleu{\Cat_{\tau'}(\boldsymbol{q})} ;
  \end{equation}
\item if $\lambda$ has more than $\rho(\tau)$ parts, then $c_\lambda^{\tau}=0$;
\item if $c_\lambda^{\tau}\neq 0$,  then $|\lambda|+\binom{l(\lambda)}{2}\leq |\tau|$;
\item if $\tau=\tau'+\tau_{n,n}$, with $\tau'$ triangular of length at most $n$, then 
\begin{equation}\label{nabla_property}
   \bleu{e_n^\perp \Cat_{\tau}(\boldsymbol{q})}:=\bleu{\Cat_{\tau'}(\boldsymbol{q})},
  \end{equation}
\item  for any $\tau$, 
\begin{equation}\label{delta_conjecture_un}
   \bleu{(e_{k}^\perp \Cat_{\tau})(q)}=\bleu{q^{|\tau|-\binom{k+1}{2}}\qbinom{\rho(\tau)}{k}_{{1/q}}}.
  \end{equation}
\item for all $k$ and $n$ such that $0\leq k\leq n-1$, 	and $\tau:=\tau_{n,n}$, then
\begin{equation}\label{delta_conjecture}
   \bleu{(e_{n-k-1}^\perp \Cat_{\tau})(q,t)}:=\bleu{\sum_{\alpha\subseteq \tau} t^{\dinv_{\tau}(\alpha)} \Big( \sum_{\newatop{\des(\alpha)\subseteq J }{|J|=k}}q^{\area_{\tau}^J(\alpha)} \Big)},
  \end{equation}
\end{enumerate}
where the second summation runs over subsets $J$ of $\{i\,|\, 1\leq i\leq n-1\}$ that satisfy the stated requirements.
To finish parsing the right-hand side of this last equality, we recall \autoref{define_j_area}, and set
\begin{equation}
    \bleu{\des(\alpha)}:=\bleu{\{i\,|\, 1\leq i\leq n-1,\ {\rm and}\  \alpha_i>\alpha_{i+1}\}}.
\end{equation} 
We underline that this right-hand side is exactly the combinatorial description of the coefficient of $e_n$ in the Delta theorem.
Extending this equality to any triangular partition would thus have an interesting impact on possible extensions of this theorem. 

Besides cases already covered, some experimentally calculated values are as follows
  \begin{align*}    
&\Cat_{321}=s_{111} + s_{31} + s_{41} + s_{6},\\
&\Cat_{421}=s_{211} + s_{32} + s_{41} + s_{51} + s_{7},\\
&\Cat_{431}=s_{311} + s_{42} + s_{51} + s_{61} + s_{8},\\
&\Cat_{432}= s_{411} + s_{33} +  s_{52} + s_{61} + s_{71} + s_{9},\\ 
&\Cat_{531}=  s_{221} + s_{411} + s_{42} + s_{52} + s_{61} + s_{71} + s_{9},\\
&\Cat_{532}= s_{311} + s_{521} +  s_{43} + s_{52} + s_{62} + s_{71} + s_{81} + s_{10.},\\
&\Cat_{631}= s_{311} + s_{521} +  s_{43} + s_{52} +  s_{62} + s_{71} + s_{81} + s_{10.},\\
&\Cat_{4321}= s_{1111} + s_{311} + s_{411}+ s_{511}   + s_{42} + s_{43} + s_{61} + s_{62}+ s_{71} + s_{81} + s_{10.},\\
&\Cat_{54321}=s_{11111} + s_{3111} + s_{4111} + s_{5111} + s_{6111}  + s_{611} + s_{711} + 2\,s_{811}  + s_{911} + s_{10.11}\\
   &\qquad\qquad  + s_{421} + s_{521} + s_{621}+ s_{721}  + s_{821}  + s_{431}  + s_{531} + s_{631}  + s_{441} \\
   &\qquad\qquad + s_{44} + s_{64} + s_{74}  + s_{63}  + s_{73} + s_{83} + s_{93} +  s_{72} + s_{82} + s_{92}\\
   &\qquad\qquad  + s_{10.2} + s_{10.1}  + s_{11.1} + s_{11.2} + s_{12.1} + s_{13.1} + s_{15.}.
      \end{align*}
 Observe that one may ``predict'' the equality $\Cat_{532}=\Cat_{631}$ using \autoref{conjugate_property} in conjunction with \autoref{nabla_property}, since $532=211+321$,  $631=31+321$, and $31$ is the conjugate of $211$.
Observe in $\Cat_{54321}$ that some coefficients are larger than $1$. Although these last values may be simply inferred just from the  knowledge of \autoref{delta_conjecture}, they do agree with representation theoretic descriptions not discussed here. For more on this see~\cite{MR3045143}.

\subsection{Hook shape component.}
A classical plethystic evaluation of Schur functions makes it easy to restrict a symmetric function to its ``hook shape component''. Indeed, recall that
 \begin{equation}
   \bleu{\frac{1}{u+v} s_\mu[u-\varepsilon v]}:=\bleu{\begin{cases}
     u^av^b & \text{if}\ \mu=(a\,|\,b), \\
      0 & \text{otherwise},
\end{cases}}
  \end{equation}
using Frobenius notation for hooks, and with $\varepsilon$ standing for a ``formal plethystic variable'' such that $f[\varepsilon \x ]=\omega f(\x )$. Then, it is easy to show that 
\begin{proposition} Assuming \autoref{delta_conjecture_un}, 
\begin{align}
   \bleu{\frac{1}{u+v} \Cat_{\tau}[u-\varepsilon v]}
    &=
   \bleu{\sum_{a+b=n-1}c_{(a\,|\,b)}^\tau u^av^b}\nonumber\\
    &=
   \bleu{u^{|\tau|} \prod_{i=1}^{\rho(\tau)-1} \left(1+\frac{v}{u^i}\right).}\label{formule_hook}
  \end{align}
  where $\rho(\tau):=\min(l(\tau),l(\tau'))$.
 \end{proposition}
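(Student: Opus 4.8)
The plan is to bypass the hook coefficients $c^\tau_{(a|b)}$ entirely and compute the left-hand side through the operators $e_k^\perp$, for which \autoref{delta_conjecture_un} already supplies a closed form. The first equality is immediate and needs no positivity: applying the displayed plethystic identity for $\frac{1}{u+v}s_\mu[u-\varepsilon v]$ term by term to the Schur expansion $\Cat_\tau(\boldsymbol q)=\sum_\lambda c_\lambda^\tau\,s_\lambda$ annihilates every non-hook $s_\lambda$ and sends the hook $s_{(a|b)}$ to $u^av^b$, so that $\frac{1}{u+v}\Cat_\tau[u-\varepsilon v]=\sum_{a,b} c^\tau_{(a|b)}\,u^av^b$. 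All the content is therefore in the second equality.

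The key step is the ``fermionic variable addition'' identity
\begin{equation*}
\Cat_\tau[u-\varepsilon v]=\sum_{k\ge 0} v^k\,(e_k^\perp \Cat_\tau)(u),
\end{equation*}
the $\omega$-dual of the familiar $f[X+z]=\sum_k z^k\,(h_k^\perp f)[X]$, where $(e_k^\perp \Cat_\tau)(u)$ is the single-variable specialization. I would justify it by linearity, checking it on one hook $s_{(a|b)}$: a vertical $k$-strip can be removed from the hook $(a+1,1^b)$ so as to leave a single row only for $k=b$ (leaving the row $(a+1)$) or for $k=b+1$ (leaving the row $(a)$), so the right-hand side collapses to $u^{a+1}v^b+u^av^{b+1}=(u+v)u^av^b$, which matches $s_{(a|b)}[u-\varepsilon v]=(u+v)u^av^b$. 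Summing against the $c^\tau_{(a|b)}$ then gives the identity for $\Cat_\tau$.

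With this in hand I would substitute \autoref{delta_conjecture_un}, turning the right-hand side into $\sum_k v^k\,u^{|\tau|-\binom{k+1}{2}}\qbinom{\rho(\tau)}{k}_{1/u}$, and then invoke the $q$-binomial theorem $\sum_k \qbinom{N}{k}_Q Q^{\binom{k}{2}}x^k=\prod_{i=0}^{N-1}(1+Q^ix)$ with $Q=1/u$ and $x=v/u$, using $\binom{k+1}{2}=\binom{k}{2}+k$ to absorb the extra factor into $x^k$. This collapses the sum to $u^{|\tau|}\prod_{i=1}^{\rho(\tau)}(1+v/u^i)$. Dividing by $u+v$ is then clean: the $i=1$ factor is exactly $(1+v/u)=(u+v)/u$, so it cancels the prefactor and removes one power of $u$, yielding the product on the right of \autoref{formule_hook}.

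The hard part is entirely bookkeeping rather than conceptual. One must pin down the normalization of the $\varepsilon$-alphabet so that the single-hook check produces precisely $(u+v)u^av^b$ (which of $k=b,b+1$ contributes the $u^{a+1}$ versus the $u^a$ term, and with what sign), and then track the exponent shift $\binom{k+1}{2}=\binom{k}{2}+k$ through the $q$-binomial theorem carefully enough that the $\frac{1}{u+v}$ cancellation lands on exactly the first factor of the product. A useful sanity check is to recover small cases directly from the hook content of the listed expansions, e.g.\ $\tau=(21)$ gives $u^2+v$ and $\tau=(321)$ gives $u^5+(u^3+u^2)v+v^2$, and to confirm these agree with the product after the cancellation.
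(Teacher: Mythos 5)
Your overall route---expand $\Cat_{\tau}[u-\varepsilon v]$ in powers of $v$ via the operators $e_k^\perp$, substitute \autoref{delta_conjecture_un}, and finish with the $q$-binomial theorem---is surely the intended one: the paper offers no proof beyond ``it is easy to show'', and the hypothesis ``assuming \autoref{delta_conjecture_un}'' forces exactly this computation. Your two middle steps are correct: the addition formula $f[u-\varepsilon v]=\sum_{k\ge 0}v^k\,(e_k^\perp f)(u)$ holds (note that it requires the convention $f[-\varepsilon \x]=\omega f(\x)$, i.e.\ $\varepsilon^k=(-1)^k$, which is what the paper's displayed hook evaluation actually uses, despite the paper writing $f[\varepsilon\x]=\omega f(\x)$), and the $q$-binomial theorem then gives
\begin{equation*}
\Cat_{\tau}[u-\varepsilon v]\;=\;\sum_{k\ge 0}v^k\,u^{|\tau|-\binom{k+1}{2}}\qbinom{\rho(\tau)}{k}_{1/u}\;=\;u^{|\tau|}\prod_{i=1}^{\rho(\tau)}\Bigl(1+\frac{v}{u^{i}}\Bigr).
\end{equation*}
One repair is needed in your justification of the addition formula: checking it ``on one hook'' and then ``summing against the $c^\tau_{(a\,|\,b)}$'' is not enough, because $\Cat_\tau$ is not a combination of hooks alone (e.g.\ $s_{22}$ occurs in $\Cat_{42}$, $s_{32}$ in $\Cat_{421}$). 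You must also check that non-hook Schur functions contribute $0$ to both sides---true, since $(e_k^\perp s_\lambda)(u)=0$ as soon as $\lambda_2\ge 2$ (removing a vertical strip cannot leave a single row)---or simply invoke $f[X+Y]=\sum_\lambda (s_\lambda^\perp f)[X]\,s_\lambda[Y]$ with $Y=-\varepsilon v$, where only the column shapes $\lambda=1^k$ survive.

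The substantive problem is your last step. Cancelling the $i=1$ factor $(1+v/u)=(u+v)/u$ against $\tfrac{1}{u+v}$ removes that factor \emph{and} one power of $u$, so your computation actually yields
\begin{equation*}
\frac{1}{u+v}\,\Cat_{\tau}[u-\varepsilon v]\;=\;u^{|\tau|-1}\prod_{i=2}^{\rho(\tau)}\Bigl(1+\frac{v}{u^{i}}\Bigr),
\end{equation*}
and this is \emph{not} ``the product on the right of \autoref{formule_hook}'': the printed $u^{|\tau|}\prod_{i=1}^{\rho(\tau)-1}(1+v/u^{i})$ disagrees with it already in the $v^0$ coefficient ($u^{|\tau|}$ versus $u^{|\tau|-1}$), and no re-indexing reconciles them. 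Your own sanity checks decide the matter in favour of your product: for $\tau=21$ the hook content of $\Cat_{21}=s_{11}+s_{3}$ is $u^{2}+v=u^{2}(1+v/u^{2})$, while the printed formula gives $u^{3}+u^{2}v$; for $\tau=321$ you correctly get $u^{5}+(u^{3}+u^{2})v+v^{2}$, which is your product, not the printed one; and for $\tau=54321$ your product even predicts the coefficient $2$ of $s_{811}$ (i.e.\ of $u^{7}v^{2}$, coming from $u^{-2-5}=u^{-3-4}$) appearing in the paper's table. So your derivation is correct, but what it proves is a corrected statement: \autoref{formule_hook} as printed is misindexed and should read $u^{|\tau|-1}\prod_{i=2}^{\rho(\tau)}(1+v/u^{i})$, or equivalently, without the division, $\Cat_{\tau}[u-\varepsilon v]=u^{|\tau|}\prod_{i=1}^{\rho(\tau)}(1+v/u^{i})$. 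Asserting that the cancellation ``lands on'' the printed formula, when it demonstrably does not, is precisely the discrepancy your proposed sanity checks were designed to catch; it must be resolved explicitly rather than claimed.
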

In other words, the multiplicity of a hook indexed Schur function $s_{(a\,|\,b)}$ in $\Cat_{\tau}$ is the coefficient of $u^a v^b$ in \autoref{formule_hook}. 
For instance, the above formula gives $16$ of the $37$ terms of $\Cat_{54321}$, leaving only 21 terms to be explained:
\begin{align*}
\Cat_{54321} &= (16\ \hbox{terms}) + s_{421} + s_{521} + s_{621} + s_{721}  + s_{821} + s_{431}  + s_{531} + s_{631}  + s_{441} \\ 
 &\qquad\qquad  + s_{44} + s_{64} + s_{74}  + s_{63}  + s_{73} + s_{83} + s_{93} +  s_{72} + s_{82} + s_{92} + s_{10.2}+s_{11.2}.
      \end{align*}
 The remaining terms may be obtained using \autoref{delta_conjecture}. 
 
 A striking fact is that $\Cat_{\tau}[\boldsymbol{z}-\varepsilon a]$ essentially corresponds to the triply graded Poincar\'e series of the Khovanov-Rozansky homology of some torus knots, for adequate choices of $\tau$.
      
\section{Triangular parking functions}\label{sec_park}

 \begin{wrapfigure}{l}{0.22\textwidth}
  \vskip-10pt
   \quad
\includegraphics{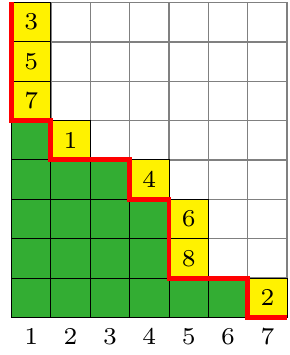}
\vskip-8pt
\wrapcaption{Parking function}
\vskip-15pt
\end{wrapfigure}
For any partition $\alpha$ and $n\geq l(\alpha)$, a height $n$ \define{parking function} of form $\alpha$ is simply a standard tableau $\pi$ of shape $(\alpha+1^n)/\alpha$. Observe that the skew-partition $(\alpha+1^n)/\alpha$ is a set of disjoint columns, say of respective size $c_i$. 
It follows readily that the number of standard tableaux considered is:
 \begin{equation}\label{dim_parking_alpha}
     \bleu{\frac{n!}{c_1!c_2!\cdots c_k!}},
\end{equation}
Observe also that the skew Schur function $s_{(\alpha+1^n)/\alpha}$ is equal to
 \begin{equation}\label{schur_parking_alpha}
     \bleu{s_{(\alpha+1^n)/\alpha}}=\bleu{e_{c_1}e_{c_2}\cdots e_{c_k}}.
\end{equation}
Given a triangular partition $\tau$ and $n\geq l(\tau)$,  the overall set  \define{$\tau$-parking functions} of height $n$, is the set $\{\pi\,|\, \pi\in\SYT((\alpha+1^n)/\alpha),\ {\rm and}\ \alpha\subseteq \tau\}$ which we denote by  $\E_{(\tau,n)}$.
For $\tau=\tau_{mn}$, with $(m,n)$ coprime integers, the total number of $\tau$-parking functions (of height $n$) is well known to be equal to $m^{n-1} $.
More generally, for $\tau=\tau_{mn}$ with $(m,n)=(da,db)$, such that $a,b$ coprime and $d=\gcd(m, n)$, we also have a parking function analog of \autoref{formule_bizley}:
\begin{equation}\label{prop_bizley}
     	   \bleu{\#\E_{(\tau_{mn},n)}}=
	   \bleu{\sum_{\lambda\vdash d} \frac{1}{z_\lambda}\binom{n}{\lambda}\,\prod_{k\in\lambda} \frac{1}{a}(ka)^{kb-1}}.
\end{equation}

\subsection{Symmetric function counting}
It is natural to extend the above enumeration to a symmetric function $q$-enumeration of height $n$ parking functions, setting:
        \begin{equation}\label{Definition_park_enumerator}
      		\bleu{\E_{(\tau,n)}(q;\x )} := \bleu{\sum_{\alpha\subseteq \tau} q^{|\tau|-|\alpha|} \, s_{(\alpha+1^n)/\alpha}(\x )}.
      \end{equation}
In the special cases $\tau=\tau_{(da,db)}$, with $a,b$ coprime, there is (see \cite{Armstrong2015,MR3882516}) a symmetric function version of \autoref{bizley_gen}:
\begin{equation}\label{bizley_gen_sym}
   \bleu{ \sum_{d=1}^\infty \E_{(\tau_{(da,db)},db)}(1;\x )\ z^d}=\bleu{ \exp\!\Big(\sum_{k\geq 1}{\textstyle \frac{1}{a}e_{kb}[ka\,\x]\, z^k/k}\Big)}.
%
\end{equation} 
We have the following nice extension of \autoref{General_q_Catalan_recurrence} for the symmetric function enumeration of parking functions, whose proof follows a very similar argument.
\begin{proposition}\label{proposition_Parking_recurrence}
For any triangular partition $\tau$ and any $n> l(\tau)$,  the generic symmetric function enumerator of $\tau$-parking functions satisfies the recurrence
\begin{equation}\label{Triangular_Parking_recurrence}
    \bleu{\E_{(\tau,n)}(q;\x )}=\bleu{q^{|\delta|}\E_{(\tau^\circ,n)}(q;\x )+\sum_{(\alpha,\beta,k)}q^{|\alpha\cap\delta|} \E_{(\alpha\setminus \partial,n-k)}(q;\x )\,\E_{(\beta,k)}(q;\x )},
\end{equation}
with initial condition $\E_{\varepsilon,n}(\x )=e_n(\x )$, and where the summation runs over the set of triples $(\alpha,\beta,k)$ such that $\tau=\alpha\odot 1\odot \beta$, with the partition $1$ corresponding to a cell $(i,k)$ of the diagonal $\partial=\partial_\tau$ of $\tau$. 
\end{proposition}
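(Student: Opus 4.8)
The plan is to adapt the ``first return to the diagonal'' argument underlying \autoref{proposition_Delta_recurrence}, lifting it from the $q$-area enumeration of $\tau$-Dyck paths to the symmetric function enumeration of $\tau$-parking functions. The key structural input is \autoref{schur_parking_alpha}: each summand $s_{(\alpha+1^n)/\alpha}(\x)$ factors as a product $e_{c_1}e_{c_2}\cdots e_{c_k}$ of elementary symmetric functions, one for each column of the skew shape $(\alpha+1^n)/\alpha$. These columns are the maximal runs of equal parts of $\alpha$ (padded to length $n$), and distinct columns occupy pairwise disjoint sets of rows. This product structure is precisely what will let the enumerator split multiplicatively along a first-return decomposition.

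First I would classify the subpartitions $\mu\subseteq\tau$ according to their first return to the diagonal $\partial=\partial_\tau$, exactly as in the Dyck path case. If $\mu\subseteq\tau^\circ$, then the whole diagonal lies in $\tau/\mu$, and since $|\tau|=|\tau^\circ|+|\partial|$ these contribute $q^{|\partial|}\E_{(\tau^\circ,n)}(q;\x)$; here one uses $l(\tau^\circ)\le l(\tau)<n$ so that the height $n$ remains admissible. Otherwise, let $\gamma=(i,k)$ be the first-return cell, determining a decomposition $\tau=\alpha\odot\gamma\odot\beta$ with $(\alpha,\beta)\in\Delta(\tau)$. The first-return argument establishing \autoref{proposition_Delta_recurrence} already supplies the bijection $\mu\leftrightarrow(\mu_1,\mu_2)$ with $\mu_1\subseteq\alpha\setminus\partial$, $\mu_2\subseteq\beta$, and $\gamma\in\mu$, together with the size bookkeeping $|\tau|=|\alpha|+|\beta|+1$ and $|\mu|=|\mu_1|+|\mu_2|+1$; this accounts for the prefactor $q^{|\alpha\cap\partial|}$ and for the $q$-areas of $\mu_1$ relative to $\alpha\setminus\partial$ and of $\mu_2$ relative to $\beta$.

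The one genuinely new ingredient is the behaviour of the symmetric weights under this decomposition. Writing $\gamma=(i,k)$, the bottom $k$ rows of $(\mu+1^n)/\mu$ form the skew shape of the height-$k$ parking functions of $\mu_2\subseteq\beta$, while the top $n-k$ rows form the skew shape of the height-$(n-k)$ parking functions of $\mu_1\subseteq\alpha\setminus\partial$. The crucial point is that the first return forces $\mu$ to have a corner at the row of $\gamma$, i.e.\ $\mu_k>\mu_{k+1}$, so that no column of $(\mu+1^n)/\mu$ straddles the cut between rows $k$ and $k+1$. Consequently the columns of $(\mu+1^n)/\mu$ are the disjoint union of the columns of the two pieces, and \autoref{schur_parking_alpha} gives the factorization
\[
  s_{(\mu+1^n)/\mu}(\x)=s_{(\mu_1+1^{n-k})/\mu_1}(\x)\,s_{(\mu_2+1^{k})/\mu_2}(\x).
\]
Summing over all $\mu$ with fixed first return $\gamma$ yields the term $q^{|\alpha\cap\partial|}\E_{(\alpha\setminus\partial,n-k)}(q;\x)\,\E_{(\beta,k)}(q;\x)$, and adding the interior term produces \autoref{Triangular_Parking_recurrence}. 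The initial condition is immediate: for $\tau=\varepsilon$ the only subpartition is $\varepsilon$ and $(\varepsilon+1^n)/\varepsilon=1^n$, so $\E_{(\varepsilon,n)}(\x)=s_{1^n}(\x)=e_n(\x)$.

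The main obstacle I anticipate is making the third step fully rigorous: one must show that the first-return cell $\gamma=(i,k)$ really sits at a descent of $\mu$ (so that the cut is clean and the skew shape genuinely decomposes into two row-disjoint pieces), and that the resulting heights $k$ and $n-k$ are both admissible, meaning $k>l(\mu_2)$ and $n-k>l(\mu_1)$ so that the sub-enumerators are the intended objects. Both should follow from $\gamma$ being an outer corner of $\tau$ lying on $\partial_\tau$, together with \autoref{lemma: even cycle}, which pins down exactly which cells along a touching line are removable; but checking the column-disjointness and the height inequalities in the degenerate configurations (for instance when $\gamma$ abuts several equal parts of $\mu$) is where the care is needed. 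Everything else reduces to the already-established Dyck path bijection combined with the multiplicativity of \autoref{schur_parking_alpha}.
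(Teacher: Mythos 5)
Your proposal is correct and is essentially the paper's own argument: the paper offers no separate proof, saying only that the result ``follows a very similar argument'' to the first-return recurrence of Proposition \ref{proposition_Delta_recurrence}, and your write-up supplies precisely the one new ingredient that argument needs, namely the factorization $s_{(\mu+1^n)/\mu}=s_{(\mu_1+1^{n-k})/\mu_1}\,s_{(\mu_2+1^{k})/\mu_2}$ coming from the strict descent $\mu_k=i+1>\tau_{k+1}\geq\mu_{k+1}$ forced at the first-return corner, which prevents any column of the skew shape from straddling the cut. One slip worth fixing: the bookkeeping identities $|\tau|=|\alpha|+|\beta|+1$ and $|\mu|=|\mu_1|+|\mu_2|+1$ are false as stated (the $\odot$-product also produces a rectangle of $k(i+1)$ cells below and to the left of $\gamma$), but this is harmless because the discrepancy cancels in the only relation actually used, $|\tau|-|\mu|=\bigl(|\alpha|-|\mu_1|\bigr)+\bigl(|\beta|-|\mu_2|\bigr)$, which correctly yields the prefactor $q^{|\alpha\cap\partial|}$ and the relative $q$-areas.
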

Formulas for a $q,t$-enumeration of $\tau$-parking functions with special values of $n$ may be found in~\cite{2102.07931}. One may extend these for general values of $n$, and there are stable several parameter (\ie $\q=(q_1,q_2,\ldots,q_k)$) extensions of the form:
	$$\E_{\tau,n}(\q,\x) = \sum_{\mu\vdash n}\sum_{\lambda} c_{\lambda\mu} s_\lambda(\q)\,s_\mu(\x),$$
which specialize at $\q=(q,t)$ to the above mentionned $q,t$-enumeration. This is ongoing work.

\nocite{*}

\bibliographystyle{amsplain-ac}
\bibliography{Article_Triangulaire}

\end{document}